\renewcommand{\geq}{\geqslant}
\renewcommand{\leq}{\leqslant} 
\renewcommand{\ge}{\geqslant}
\renewcommand{\le}{\leqslant} 
\newcommand{\defini}{\textbf}
\newcommand{\Sm}{\mathscr{S}}
\newcommand{\La}{\mathscr{L}}
\definecolor{section}{HTML}{e6308a} 
\definecolor{lift}{HTML}{b51963}
\definecolor{base}{HTML}{0073e6}
\definecolor{u3}{HTML}{ff8611}
\definecolor{u2}{HTML}{ede529}
\definecolor{u4}{HTML}{1bde54}
\definecolor{u1}{HTML}{1de4a9}
\definecolor{redgentil}{HTML}{b51963}
\date{March 2026}
\author{S\'ebastien Martineau}
\address{LPSM, Sorbonne Universit\'e\\
 4 Place Jussieu, Case 158\\
 75252 Paris Cedex 05}
\email{smartineau@lpsm.paris}
\urladdr{\href{https://martineau-maths.github.io/webpage/}{martineau-maths.github.io/webpage}}
\author{R\'emy Poudevigne-{}-Auboiron\thanks{R\'emy Poudevigne-{}-Auboiron has been supported by the project ANR LOCAL (ANR-22-CE40-0012-02) operated by the Agence Nationale de la Recherche (ANR)}}
\address{LPSM, Sorbonne Universit\'e\\
 4 Place Jussieu, Case 158\\
 75252 Paris Cedex 05}
\email{rpoudevigne@lpsm.paris}
\urladdr{}
\author{Paul Rax}
\address{MathNet Team, INRIA Paris\\
48 Rue Barrault \\
75013 Paris 13}
\email{paul-pierre.rax@inria.fr}
\urladdr{\href{https://paul-rax.github.io/webpage/}{paul-rax.github.io/webpage}}
\title[Stochastic domination and lifts of random variables in percolation]{Stochastic domination and lifts of random variables in percolation theory}
\begin{abstract}
    Consider some matrix waiting for its coefficients to be written. For each column, sample independently a Bernoulli random variable of some parameter $p$. Seeing all this and possibly using extra randomness, Alice then chooses one spot in each column, in any way she wants. When the Bernoulli random variable of some column is equal to 1, the number 1 is written in the chosen spot. When the Bernoulli random variable of a column is 0, nothing is done on this column. We prove that, using extra randomness, it is possible for Bob to fill the empty entries with well chosen 0's and 1's so that the entries of the matrix are independent Bernoulli random variables of parameter $p$. We investigate various generalisations and variations of this problem, and use this result to revisit and generalise (nonstrict) monotonicity of the percolation threshold $p_c$ with respect to a form of graph-quotienting, namely fibrations. We also use this result to revisit the BK inequality.
    
    In a second part, which is independent of the first one, we revisit strict monotonicity of $p_c$ with respect to fibrations, a result that naturally requires more assumptions than its nonstrict counterpart. We reprove the bond-percolation case of the result of Martineau--Severo without resorting to essential enhancements, using couplings instead.
\end{abstract}
\begin{document}

\maketitle
\setcounter{tocdepth}{2}

\section{Introduction}

\subsection{Stochastic domination of lifts} One purpose of this article is to prove the following theorem and study several generalisations of it. Throughout the text, the notation $\pi^{-1}(b)$ stands for the set $\pi^{-1}(\{b\})$. Theorem~\ref{lakon} is best read with an eye on Figure~\ref{fig:lakon}.

\begin{theo}
\label{lakon}
Let $\pi : A\to B$ be a surjective map between nonempty countable sets. Let $p\in [0,1]$. Let $(X_b)_{b\in B}$ be a collection of independent random variables with Bernoulli distribution of parameter $p$. On the same probability space, let $(S(b))_{b\in B}$ be a collection of random variables such that, for every $b\in B$, the random variable $S(b)$ takes values in $\pi^{-1}(b)$.
At last, for every $a\in A$, set $Y_a$ to be $X_{\pi(a)}$ if $S\circ\pi(a)=a$ and 0 otherwise. 

Then, the distribution of $(Y_a)_{a\in A}$ is stochastically dominated by $\mathrm{Bernoulli}(p)^{\otimes A}$.
\end{theo}

\begin{center}
\begin{figure}
    \centering
    \begin{tikzpicture}
\matrix(base)[nodes={minimum size=6mm}, column sep = 1pt, row sep = 1pt]
  {
     &[10pt] & & & & &\node(indics)[fill=white,draw = black] {};&  \\    
     & & \node[fill=section,draw = black]{};& & & \node[fill=white,draw = black]{};& \node[fill=section,draw = black] {};&  \\    
     \node(a)[color = black, font=\large]{$A$};& \node[fill=white,draw = black]{};& \node[fill=white,draw = black]{};& &\node[fill=section,draw = black]{}; & \node[fill=section,draw = black]{};& \node[fill=white,draw = black] {};&\node[fill=white,draw = black]{};  \\ 
     & \node[fill=section,draw = black]{};& \node[fill=white,draw = black]{};& \node[fill=white,draw = black]{};&\node[fill=white,draw = black]{}; & \node[fill=white,draw = black]{};& \node[fill=white,draw = black] {};&\node[fill=section,draw = black]{};  \\ 
     & \node[fill=white,draw = black]{};& \node[fill=white,draw = black]{};& \node[fill=section,draw = black]{};&\node[fill=white,draw = black]{}; & \node[fill=white,draw = black]{};& \node[fill=white,draw = black] {};&\node[fill=white,draw = black]{};  \\[15pt]
\node(b)[color = black, font=\large]{$B$};& \node[color = base, draw = black]{$1$};& \node[color = base, draw = black]{$0$};& \node[color = base, draw = black]{$1$};& \node(indicx)[color = base, draw = black]{$0$};& \node[color = base, draw = black]{$0$};& \node[color = base, draw = black]{$1$};& \node(indicf)[color = base, draw = black]{$1$};  \\ 
  };
  \node[above left=10 pt of indics, font=\large, color = section]{$S(b)$};
  \node[color = base, below = 10pt of indicx, font=\large]{$X_b$};
  \draw [->,thick] (a) -- (b) node [left,midway, font=\large] {$\pi$};
  \matrix(final)[right =100pt of base, nodes={minimum size=6mm, draw = black}, column sep = 1pt, row sep = 1pt]
  {
       & & & & &\node(indicy){$0$};&  \\    
     & \node[color = lift,fill=lightgray!20]{$0$};& & & \node{$0$};& \node[color = lift,fill=lightgray!20]{$1$};&  \\    
     \node{$0$};& \node{$0$};& &\node[color = lift,fill=lightgray!20]{$0$}; & \node[color = lift,fill=lightgray!20]{$0$};& \node{$0$};&\node{$0$};  \\ 
    \node[color = lift,fill=lightgray!20]{$1$};& \node{$0$};& \node{$0$};&\node{$0$}; & \node{$0$};& \node{$0$};&\node[color = lift,fill=lightgray!20]{$1$};  \\ 
     \node{$0$};& \node{$0$};& \node[color = lift,fill=lightgray!20]{$1$};&\node{$0$}; & \node{$0$};& \node{$0$};&\node{$0$};  \\ 
  };
      \node[above left=10 pt of indicy, font=\large, color = lift]{$Y_a$};
      \draw [-{Stealth[length=5mm]},shorten >=15pt,shorten <=15pt,thick, color = lift] (base) -- (final);
\end{tikzpicture}

    \caption{Illustration of the notation of Theorem~\ref{lakon}.}%https://venngage.com/blog/color-blind-friendly-palette/
    \label{fig:lakon}
\end{figure}
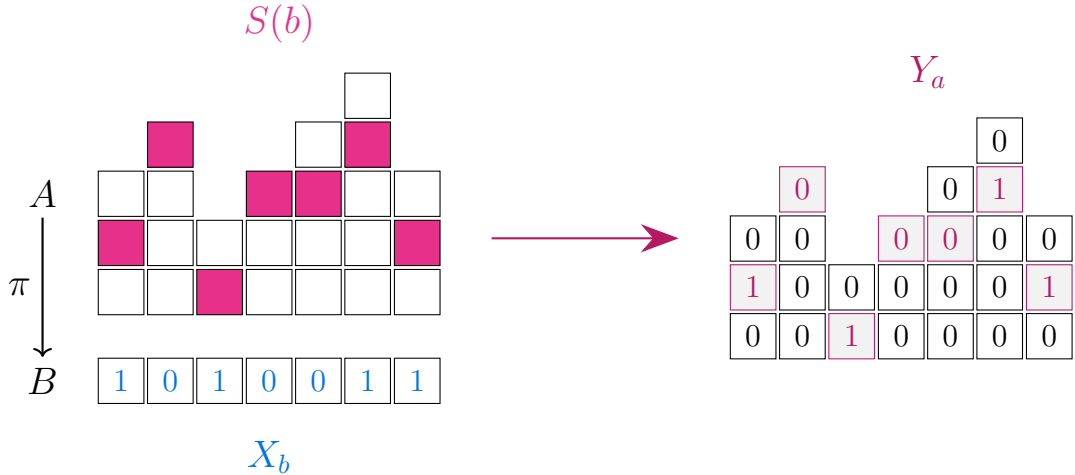
\end{center}

The conclusion of Theorem~\ref{lakon} means that it is possible to find a new probability space and random variables $(Y'_a)_{a\in A}$ and $(Z_a)_{a\in A}$ defined on this new probability space such that the following conditions hold:
\begin{enumerate}
\item the joint distribution of $(Y'_a)_{a\in A}$ is equal to that of $(Y_a)_{a\in A}$,
\item the random variables $Z_a$ are i.i.d. with Bernoulli distribution of parameter $p$,
\item and for every $a\in A$, we almost surely have $Y'_a\le Z_a$.
\end{enumerate}

\begin{rema}
In Theorem~\ref{lakon}, the random variables $S(b)$ are allowed to be highly correlated, both relative to each other and relative to $(X_b)$. It is good to have in mind the case when every $S(b)$ is of the form $f_b\left((X_{b'})_{b'\in B}\right)$ for some measurable function $f_b$.
\end{rema}

What drew us to Theorem~\ref{lakon} lies in the realm of percolation theory. In Section~\ref{sec:genperco}, we provide general context, definitions and setup regarding percolation theory. Connections between percolation theory and Theorem~\ref{lakon} are then explained in Section~\ref{sec:percolakon}.

\subsection{General background regarding percolation theory}\label{sec:genperco}
Let $\mathscr G$ be some countable locally finite graph\footnote{By ``graph'', we mean a simple graph, i.e.~the edges are undirected and we do not allow self-loops or multiple edges. ``Locally finite'' means that each vertex has a finite number of neighbours. A graph is ``countable'' if its vertex-set is countable --- which entails countability of its edge-set. We take ``countable'' to mean ``finite or countable''. Being interested in infinite connected components, the finite case, although not excluded, will have no interest: finite graphs have critical parameters equal to 1.}. \defini{Site percolation} (resp. \defini{bond percolation}) of parameter $p$ consists in declaring each vertex (resp. edge) to be open with probability $p$, independently. It can be thought of as a random subgraph as follows. In the site case, the new set of vertices is that of open vertices, and two vertices are said to be adjacent if they were adjacent in the original graph. In the bond case, all vertices are kept but only open edges are retained.
In both cases, connected components of this random subgraph are called \defini{clusters}. Percolation theory is devoted to understanding these clusters. In particular, one is interested in the \emph{infinite} clusters: is there any? if so, how many are there ? what is their geometry?

The probability that there is an infinite cluster is a function of $p$ which is weakly increasing and can only take values in $\{0,1\}$, due to Kolmogorov's zero-one law. There is therefore a unique parameter $p_c(\mathscr{G})\in[0,1]$ such that, for every $p\in [0,1]$, this probability is zero if $p<p_c(\mathscr{G})$ and one if $p>p_c(\mathscr{G})$. The number $p_c(\mathscr{G})$ is called the {\it critical parameter} of $\mathscr G$. More precisely, depending on whether one works with site or bond percolation, each graph gives rise to two critical parameters, namely $p_c^\mathrm{site}(\mathscr{G})$ and $p_c^\mathrm{bond}(\mathscr{G})$.

For most graphs, the critical parameters $p_c^\mathrm{site}(\mathscr{G})$ and $p_c^\mathrm{bond}(\mathscr{G})$ are nontrivial, meaning that they are neither equal to 0 nor to 1. Whenever these values are indeed nondegenerate, percolation gives rise to two regimes (one below $p_c$ and one above), and we say that this process undergoes a phase transition.

Percolation has been extensively studied since its introduction by Broadbent and Hammersley almost seventy years ago \cite{bh}, particularly on the $d$-dimensional cubic lattice $\mathbb{Z}^d$. It was introduced as a model for propagation of a fluid in a porous medium, random removal accounting for porosity and clusters for propagation. Since then , percolation has continued to attract attention for several reasons: it is interesting as a model of statistical mechanics; it is connected to diverse mathematical domains; and it is charming and challenging in itself. See \cite{dc} and references therein for a survey.

Since the seminal papers \cite{bs,blyps}, percolation on rather general graphs has been investigated in a systematic way, revealing a deep interplay between percolation, geometry, and group theory. A fair amount of what we revisit in this article lies in this lineage.

\subsection{Revisiting a percolation result via Theorem~\ref{lakon}}\label{sec:percolakon} It was proved in \cite[Theorem~1]{bs} --- see also  \cite[Proposition~4.1]{cmt} --- that if a graph $\Sm$ is a ``quotient'' of  a graph $\La$, then, for both site and bond percolation, we have $p_c(\Sm)\ge p_c(\La)$. Here is a more precise statement, which encompasses quotients but also other situations, via the general concept of fibration.

\begin{enonce}[remark]{Notation}
    We denote the large graph by the symbol $\La$, which one may read ``large'' rather than ``L''. Likewise, we denote the small graph by the symbol $\Sm$, which one may read ``small'' rather than ``S''. Given any graph $\mathscr{G}$, we denote by $V_\mathscr{G}$ its set of vertices and by $E_\mathscr{G}$ its set of edges.
\end{enonce}

In this paper, given two countable locally finite graphs $\La$ and $\Sm$, a \defini{fibration} from $\La$ to $\Sm$ is a surjective map $\pi:V_\La\to V_\Sm$ such that for every vertex $x$ of $\La$ and every neighbour $v$ of $\pi(x)$ in $\Sm$, there is a neighbour $y$ of $x$ in $\La$ such that $\pi(y)=v$.

\begin{rema}
Apart from surjectivity, the definition of fibration states that for every vertex $u$ in $\Sm$ and every neighbour $v$ of $u$, we are able to ``lift'' the edge $\{u,v\}$ in $\Sm$ in order to put the endpoint corresponding to $u$ \emph{anywhere} in $\pi^{-1}(u)$.
\end{rema}

\begin{theo}[Benjamini--Schramm 96]
\label{thm:bs}
	Let $\La$ and $\Sm$ be two countable locally finite graphs. Assume that there is a fibration from $\La$ to $\Sm$.

Then, we have $p_c^\mathrm{site}(\La)\le p_c^\mathrm{site}(\Sm)$ and $p_c^\mathrm{bond}(\La)\le p_c^\mathrm{bond}(\Sm)$.
\end{theo}

The proof of Theorem~\ref{thm:bs} proceeds by exploring (a spanning tree of) the cluster of the origin step by step. Doing so properly ensures that vertices (or edges) yet to be revealed are still, conditionally on previous steps, open independently and with probability $p$. As $\pi$ can lift edges, one is able to lift trees as well. By performing $p$-percolation on $\Sm$, lifting what one sees in the exploration (both closed and open vertices or edges) to $\La$, and then putting i.i.d. $\mathrm{Bernoulli}(p)$ random variables on what remains of $\La$, one gets a coupling such that whenever the origin is connected to infinity in $\Sm$, this is the case as well in $\La$.

This raises the question of whether it is necessary to proceed via such a cautious exploration algorithm, and to lift both positive and negative information. If $p>p_c(\Sm)$, would it not be easier to simply pick an infinite open path in $\Sm$ and lift it to $\La$, thus proving that $p\ge p_c(\La)$? If we lift an infinite path and put i.i.d. $\mathrm{Bernoulli}(p)$ random variables on its complement, it will not work: some vertices will be open with probability larger than $p$, and independence would be problematic as well. Nevertheless, Theorem~\ref{lakon} ensures that it is possible to fill the complement of the lifted path with well chosen random variables so that the final result is $p$-percolation on $\La$.

\begin{proof}[Sketch of proof of Theorem~\ref{thm:bs} using Theorem~\ref{lakon}]
We treat the case of site percolation, as the case of bond percolation can either be tackled in the same way or reduced to that of site percolation --- see Section~\ref{sec:bondtosite}.

Let $p$ be such that there is an infinite open path in $\Sm$ almost surely. Let us show that this is also the case for $\La$. We set $A=V_\La$ and $B=V_\Sm$. %, which are indeed countable due to the assumptions of connectedness and local finiteness.
%First, let us recall why the map $\pi$ is surjective. The vertex-set $V_\La$ is nonempty, so we can pick some vertex $u$ in $\La$. Now, we use connectedness of $\Sm$. For $x\in V_\Sm$, if we want to find a vertex in $\pi^{-1}(x)$, it suffices to take a path from $\pi(u)$ to $x$ and lift it edge by edge, starting at $u$.
Let $(X_b)_{b\in B}$ be a $p$-percolation on $\Sm$, where $X_b=1$ means that the vertex $b$ is open and $X_b=0$ means that it is closed. Almost surely, there is an infinite open path: pick one and call it $\kappa$. One can lift it via $\pi$ to some path in $\La$, which we call $\tilde{\kappa}$. For each $b\in B$, if $b$ belongs to $\kappa$, set $S(b)$ to be its corresponding vertex in $\tilde\kappa$, otherwise define $S(b)$ in an arbitrary way. Define $(Y_a)$ as in Theorem~\ref{lakon}. Notice that the configuration $(Y_a)$ contains an infinite open path, namely $\tilde{\kappa}$.
Theorem~\ref{lakon} now states that $(Y_a)$ is stochastically dominated by $p$-percolation on $\La$. As there is almost surely an infinite open path in $(Y_a)$, so is the case for $p$-percolation on $\La$. This completes the proof, except that we did not take care of measurability issues regarding $\kappa$ and $\tilde{\kappa}$. We defer these details to Section~\ref{sec:measurability}.
\end{proof}

\subsection{Other results we revisit} Still in the seminal paper \cite{bs}, Benjamini and Schramm asked a precise question which can be very roughly summarised as: is $p_c$ strictly increasing under strict quotient? The statement they asked for was proved in \cite{ms} by using essential enhancements.\footnote{For a reference on essential enhancements, see the original paper \cite{ag}. As some lemma in this paper is not properly established, see \cite{bbr} for a discussion of what exactly is rigourously known.} We provide a new proof of the bond-percolation case of this result. Our new proof does not use essential enhancements or differential inequalities but couplings instead. This new proof does not rely on Theorem~\ref{lakon} --- it is more traditional and relies on appropriate exploration algorithms.

At last, let us mention that even though we did not investigate stochastic domination of lifts with the BK inequality in mind, our Theorem~\ref{lakon} is tailored to revisit it. This is performed in the short Section~\ref{sec:bk}. The BK inequality itself is presented there.

\subsection*{Structure of the paper} In Section~\ref{sec:main}, we state and prove our main theorem, of which Theorem~\ref{lakon} is a particular case. Section~\ref{sec:gen} is devoted to investigating variations of the main theorem: a rather delicate statement appears when we try to lift several random variables per column --- see Theorem~\ref{thm:multilift}. In Section~\ref{sec:perco}, we apply our main theorem to not only give a new proof of Theorem~\ref{thm:bs} but also generalise it. The BK inequality is revisited in Section~\ref{sec:bk}. At last, we revisit strict monotonicity of $p_c$ under quotient. This is performed in Section~\ref{sec:strict}, which is completely independent of the other sections. All sections can be read mostly independently of each other, except that we advise to read Section~\ref{sec:statement} before Section~\ref{sec:gen}.

\subsection*{Acknowledgements} This project originated as a side project from an internship of the third author under the supervision of the first one and David Garc\'ia-Zelada, whom we thank for the very nice working experience as well as for his clear view of geometry. We are grateful to Fran\c{c}ois Ledrappier and Florent Martineau for stimulating conversations. We are also indebted to David Garc\'ia-Zelada, Thierry L\'evy and Romain Tessera for leading us to understand that the terminology ``weak covering map'' from Martineau--Severo was poorly chosen: these maps are much closer to fibrations in homotopy theory than to covering maps. We are grateful to Franco Severo for asking whether our techniques, then used to prove non-strict inequalities for $p_c$, could be used to recover the strict monotonicity theorem of \cite{ms}: we did not manage to do so but our attempts have led to the results of Section~\ref{sec:multi} and Section~\ref{sec:strict}. We thank Vibhu Dalal for proofreading this article, and the reviewer for their feedback. At last, SM has a warm thought for Vincent Tassion, as Section~\ref{sec:strict} has a flavour very close to what we were trying to do at the very beginning of our doctorate.

\section{Main theorem}
\label{sec:main}

In this section, we establish a vastly generalised version of Theorem~\ref{lakon}, namely Theorem~\ref{main}. The main point of Theorem~\ref{main} is to yield Corollary~\ref{coro:indep}, which is similar to Theorem~\ref{lakon} but with $[0,\infty]$-valued random variables, still taken to be independent\footnote{only between $\pi$-fibres, not within them} but not necessarily identically distributed.
This corollary will be stated using a different framework. Stated in the same way as Theorem~\ref{lakon}, it goes as follows.

For completeness, we first recall what we mean in general by stochastic domination. Let $(E,\mathscr{E})$ be a measurable space. We further assume that $E$ is endowed with a partial order $\leq$ such that $\left\{(x,y)\in E^2\,:\,x\le y\right\}$ belongs to the $\sigma$-field $\mathscr{E}\otimes\mathscr{E}$. Let $\mu$ and $\nu$ be two probability measures on $(E,\mathscr{E})$. We say that $\mu$ is \defini{stochastically dominated} by $\nu$ if there are random variables $X$ and $Y$ defined on the same probability space such that the following three conditions hold:
\begin{enumerate}
    \item the distribution of $X$ is $\mu$,
    \item the distribution of $Y$ is $\nu$,
    \item the inequality $X\le Y$ holds almost surely.
\end{enumerate}
When working on $[0,\infty]^A$, the implicit order will always be the product order, given by
\[
x\le y \quad \iff \quad \forall a\in A,\quad x_a\le y_a.
\]
The set $[0,\infty]$ is endowed with its usual topology, making it a compact space. The space $[0,\infty]^A$, endowed with the product topology, is then compact as well.% As this paper is concerned only with questions relative to order, we could have worked with $[0,1]$-random variables instead. Though, we decided

\begin{coro}[reformulation of Corollary~\ref{coro:indep}]
\label{coro:indep-va}
    Recall the notations of Theorem \ref{thm:bs}. Then, for every $a\in A$, set $Y_a:=X_{\pi(a)}\,\mathds{1}_{S\circ\pi(a)=a}$.

    For every $b\in B$, let $\rho_b$ be a probability measure on $[0,\infty]^{\pi^{-1}(b)}$. Assume that for every $b\in B$ and every $a\in\pi^{-1}(b)$, the distribution of $X_{b}$ is stochastically dominated by the $a$-marginal of $\rho_{b}$.
    Then, the distribution of $(Y_a)_{a\in A}$ is stochastically dominated by $\bigotimes_{b\in B}\rho_b$.
\end{coro}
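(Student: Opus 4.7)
My plan is to reduce the statement to the upper-set criterion of Strassen's theorem and then handle the fibre-wise structure. Specifically, it suffices to show that for every measurable upper set $U \subseteq [0,\infty]^A$, one has $P(Y \in U) \le \bigotimes_b \rho_b(U)$. A useful structural reduction is that the random vector $(Y_a)_{a \in A}$ always has at most one nonzero coordinate in each fibre $\pi^{-1}(b)$; call such vectors \emph{thin}. Replacing $U$ by the sub-upper-set $U^\flat$ generated by its thin minimal elements leaves $\{Y \in U\}$ unchanged while $\bigotimes_b \rho_b(U^\flat) \le \bigotimes_b \rho_b(U)$; hence I may assume $U = U^\flat$.

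Thin minimal elements are naturally parametrised by pairs $(\sigma, \phi)$, where $\sigma : B^* \to A$ is a partial section (with $B^* \subseteq B$ and $\sigma(b) \in \pi^{-1}(b)$) and $\phi : B^* \to (0,\infty]$ specifies thresholds; denote the resulting element by $m_{\sigma,\phi}$. For a single such generator,
\[
\{Y \ge m_{\sigma,\phi}\} = \bigcap_{b \in B^*}\{S(b) = \sigma(b)\} \cap \bigcap_{b \in B^*}\{X_b \ge \phi(b)\},
\]
and dropping the $S$-constraints (which only enlarges the event) together with the independence of the $(X_b)_b$ gives $P(Y \ge m_{\sigma,\phi}) \le \prod_{b \in B^*} P(X_b \ge \phi(b)) \le \prod_{b \in B^*} P((\rho_b)_{\sigma(b)} \ge \phi(b)) = \bigotimes_b \rho_b(\{z : z \ge m_{\sigma,\phi}\})$, using the marginal stochastic domination assumption and the product structure of $\bigotimes_b \rho_b$.

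The main obstacle I anticipate is passing from this single-generator bound to a general thin-generated upper set $U^\flat = \bigcup_\alpha \{z \ge m^{(\alpha)}\}$: a naive union bound is too lossy, since it produces $\sum_\alpha \bigotimes_b \rho_b(\{z \ge m^{(\alpha)}\})$ rather than $\bigotimes_b \rho_b(U^\flat)$. To overcome this, I would build an explicit coupling rather than verify Strassen's inequality set-by-set. Concretely, on an enlarged space with iid $\mathrm{Uniform}([0,1])$ variables $(V_b)_{b\in B}$ independent of $(X_b, S)_b$, define $Z^{(b)} := \Phi_b(X_b, S(b), V_b)$, where $\Phi_b$ combines a Strassen marginal coupling realising $\mu_b \preceq (\rho_b)_{S(b)}$ with a regular conditional distribution of $\rho_b$ given its $S(b)$-coordinate, so that $Z^{(b)}_{S(b)} \ge X_b$ almost surely and thus $Z^{(b)}_a \ge Y_a$ for every $a \in \pi^{-1}(b)$. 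The nontrivial check is that $(Z^{(b)})_{b \in B}$ is jointly distributed as $\bigotimes_b \rho_b$; this requires a careful conditional-independence argument exploiting that the $X_b$'s are independent across fibres and that the extra randomness $V_b$ absorbs the cross-fibre correlations carried by $S$.

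An alternative route, which I suspect is how the proof in Section~\ref{sec:main} actually proceeds, is to first establish a more general master statement (namely Theorem~\ref{main}) that produces the coupling simultaneously for all fibres, and then deduce Corollary~\ref{coro:indep-va} as a particular case by setting the dominating measures within each fibre to be $\rho_b$.
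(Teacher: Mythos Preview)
Your explicit fibre-wise coupling has a genuine gap: the map $\Phi_b$ you describe does \emph{not} produce $Z^{(b)}\sim\rho_b$. The Strassen coupling realising $\mu_b\preceq(\rho_b)_a$ outputs a $(\rho_b)_a$-distributed variable only when its input has law $\mu_b$; but $S(b)$ may depend on $X_b$, so on $\{S(b)=a\}$ the conditional law of $X_b$ is generally not $\mu_b$, and the coupling breaks. For a one-column instance, take $\pi^{-1}(b)=\{1,2\}$, $X_b\sim\mathrm{Bernoulli}(\tfrac12)$, $\rho_b=\mathrm{Bernoulli}(\tfrac12)^{\otimes 2}$, and $S(b)=1$ iff $X_b=1$: your recipe gives $\mathbb{P}(Z^{(b)}_1=1)=\tfrac34\neq\tfrac12$. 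Even in the favourable situation where each $S(b)$ is independent of $X_b$ (so the fibre-wise laws come out right), the ``nontrivial check'' of joint independence actually \emph{fails}: with $B=\{1,2\}$, fibres $\{+,-\}$, $\rho_i$ product Bernoulli, and $S(1)$ a function of $X_2$ while $S(2)$ is a function of $X_1$, one computes $\mathbb{P}(Z^{(1)}_+=Z^{(2)}_+=1)=\tfrac{5}{16}\neq\tfrac14$. The auxiliary $V_b$'s do not absorb the correlations that $S$ injects across fibres.

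This obstruction is precisely what the paper's architecture is built to overcome, and your last paragraph correctly guesses the route. Corollary~\ref{coro:indep-va} is read off from Theorem~\ref{main}, whose proof (Proposition~\ref{finite}) does \emph{not} attempt to couple all fibres simultaneously. Instead it starts from the ``flattened'' lift supported on a fixed deterministic section $\mathfrak s$ --- for which domination by $\rho$ is Assumption~$\mathbf{B}_\mu^\rho$ --- and then unflattens one column at a time: condition on the $Z$-values outside the current column (Lemma~\ref{lem:conditional}), invoke the one-column coupling (Lemma~\ref{lem:one-column}) together with Assumption~$\mathbf{A}_\rho$ to resample that column monotonically, and iterate. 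The conditioning is on $Z$ rather than on $S$, and independence across columns is never checked a posteriori --- it is preserved step by step.
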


\begin{rema}
    \label{rem:reduction}
    Even though this statement seems much more general than Theorem~\ref{lakon}, it can actually be easily deduced from the $p=\sfrac12$ case of Theorem~\ref{lakon}. Indeed, by the trick of the pseudoinverse\footnote{More formally, this is a left adjoint in the sense of Galois connections.} cumulative distribution function, it suffices to deal with the case where each $X_b$ is uniformly distributed on $[0,1]$. Instead of assuming marginals of $\rho_b$ to stochastically dominate $\mathrm{Unif}([0,1])$, we can assume that we are in the least favourable case, namely when these marginals are equal to the uniform distribution on $[0,1]$. By considering binary digits, which are then independent Bernoulli random variables of parameter $\sfrac12$, up to multiplying $A$ and $B$ by $\mathbb{N}$, the general case is reduced to the balanced case of Theorem~\ref{lakon}.
\end{rema}

\subsection{Statement}\label{sec:statement} Let $\pi :A\to B$ be a surjective map between nonempty countable sets. We will be interested in a specific kind of probability measures on $[0,\infty]^A$. Say that a probability measure $\mu$ on $[0,\infty]^A$ is a \defini{$\pi$-lift} if, for $\mu$-almost surely $x$, for every $b\in B$, at most one $a\in \pi^{-1}(b)$ satisfies $x_a\neq 0$.

Notice that a probability measure is a $\pi$-lift if and only if it can be obtained as the distribution of a random variable $Y$ built as follows:
\begin{enumerate}
    \item sample some random variable $X$ taking values in $[0,\infty]^B$,
    \item on the same probability space as $X$, for every $b$, sample some random variable $S(b)$ taking values in $\pi^{-1}(b)$,
    \item for every $a\in A$, set $Y_a=X_{\pi(a)}\,\mathds{1}_{S\circ\pi(a)=a}$.
\end{enumerate}

One purpose of the present paper is to study results of the following type: {\it Let $\mu$ and $\rho$ be two probability measures on $[0,\infty]^A$. Assume that $\mu$ is a $\pi$-lift and ``some suitable assumptions''. Then, the probability measure $\mu$ is stochastically dominated by $\rho$.}

Our main result is the following one. Recall that a \defini{section} $s$ of $\pi$ is a map $s:B\to A$ such that $\pi \circ s =\mathrm{id}_B$. Informally, it is a way to select one element in each $\pi^{-1}(b)$.

\begin{theo}
\label{main}
Let $\pi :A\to B$ be a surjective map between nonempty countable sets. Let $\mu$ and $\rho$ be two probability measures on $[0,\infty]^A$. Assume that $\mu$ is a $\pi$-lift. Further, assume it is possible to fix a section $\mathfrak s$ of $\pi$ such that the following two conditions simultaneously hold:
\begin{itemize}
    \item[$\mathbf{A}_\rho$]\label{item:assa} Let $Z$ be a $\rho$-distributed random variable. For every $b\in B$, let $W_b$ denote the random variable $(Z_c\,:\,c\in A\text{ such that }\pi(c)\neq b)$. For every $b$, every $a\in\pi^{-1}(b)$ and every event $\mathrm{H}\in \sigma(W_b)$ of positive probability, the conditional probability distribution of $Z_{\mathfrak s(b)}$ given $\mathrm{H}$ is stochastically dominated by that of $Z_a$ given $\mathrm{H}$.
    \item[$\mathbf{B}_\mu^\rho$]\label{item:assb} Let $Y$ be a $\mu$-distributed random variable. For every $b\in B$, let $X_b:=\max_{a\in \pi^{-1}(b)}Y_a$. Notice that this maximum is almost surely well defined since $\mu$ is a $\pi$-lift. For every $a\in A$, let $Y'_a=X_{\pi(a)}\,\mathds{1}_{\mathfrak s\circ \pi (a)=a}$. The condition we ask is that the distribution of $Y'$ is stochastically dominated by $\rho$.
\end{itemize}
Then, the probability measure $\mu$ is stochastically dominated by $\rho$.
\end{theo}

The assumptions of Theorem~\ref{main} are not meant to be natural {\it a priori} but to capture the essence of the proof --- and we will see in Section~\ref{sec:somesections} that a natural generalisation of it fails. From Theorem~\ref{main}, we deduce the two following more concrete results, which are derived in Section~\ref{sec:coro}.

If $\mu$ is a $\pi$-lift, the pushforward of $\mu$ by $x\longmapsto (b \longmapsto \sup_{a\in\pi^{-1}(b)} x_a)$ is called the \defini{pushdown} of $\mu$. This pushdown can alternatively be described as follows. Let $Y$ be a $\mu$-distributed random variable. For every $b\in B$, let $Z_b:=\max_{a\in \pi^{-1}(b)}Y_a$. Notice that this maximum is almost surely well defined since $\mu$ is a $\pi$-lift. Then, the distribution of $(Z_b)$ is the pushdown of $\mu$.

\begin{coro}
\label{coro:indep}
Let $\pi : A\to B$ be a surjective map between nonempty countable sets. For every $b\in B$, let $\nu_b$ be a probability measure on $[0,\infty]$ and let $\rho_b$ be a probability measure on $[0,\infty]^{\pi^{-1}(b)}$. Assume that, for every $b\in B$ and every $a\in \pi^{-1}(b)$, the probability measure $\nu_{b}$ is stochastically dominated by the $a$-marginal of $\rho_{b}$.
Let $\mu$ be a $\pi$-lift such that its pushdown is stochastically dominated by $\bigotimes_{b\in B}\nu_b$.

Then, the probability measure $\mu$ is stochastically dominated by $\bigotimes_{b\in B}\rho_b$.
\end{coro}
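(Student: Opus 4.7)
The plan is to deduce this result from Theorem~\ref{main} after replacing $\rho := \bigotimes_{b \in B} \rho_b$ by a slightly smaller dominating measure whose fibre marginals have been uniformised. Concretely, I will build, for each $b\in B$, a probability measure $\tilde\rho_b$ on $[0,\infty]^{\pi^{-1}(b)}$ which is stochastically dominated by $\rho_b$ and whose $a$-marginal equals $\nu_b$ for every $a\in\pi^{-1}(b)$. Setting $\tilde\rho := \bigotimes_{b\in B} \tilde\rho_b$, transitivity of stochastic domination then reduces the problem to showing that $\mu$ is stochastically dominated by $\tilde\rho$.

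To build $\tilde\rho_b$, I use a standard monotone coupling. Since $\nu_b$ is stochastically dominated by the $a$-marginal of $\rho_b$ for every $a\in\pi^{-1}(b)$, there exists for each such $a$ a measurable function $h_a^b : [0,\infty]\times[0,1]\to[0,\infty]$ such that if $Z$ has the $a$-marginal of $\rho_b$ as its law and $U$ is an independent uniform on $[0,1]$, then $h_a^b(Z,U)$ has distribution $\nu_b$ and $h_a^b(Z,U)\leq Z$ almost surely. I then define $\tilde\rho_b$ to be the joint law of $(h_a^b(Z_a,U_a))_{a\in\pi^{-1}(b)}$, where $(Z_a)_{a\in\pi^{-1}(b)} \sim \rho_b$ and the $U_a$ are i.i.d. uniform and independent of $(Z_a)$. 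By construction, $\tilde\rho_b$ is coordinatewise dominated by $\rho_b$ and each of its marginals equals $\nu_b$.

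Fix any section $\mathfrak{s}$ of $\pi$. To verify $\mathbf{A}_{\tilde\rho}$, I observe that the product structure of $\tilde\rho$ makes $W_b$ independent of the $b$-fibre coordinates, so that conditioning on an event in $\sigma(W_b)$ leaves the laws of $Z_{\mathfrak{s}(b)}$ and $Z_a$ unchanged; both equal $\nu_b$, giving the required (in fact equal) stochastic domination. To verify $\mathbf{B}_\mu^{\tilde\rho}$, I start from the hypothesis that the pushed-down $(X_b)_{b\in B}$ of $\mu$ is stochastically dominated by $\bigotimes_b \nu_b$, and I fix a coupling $(X_b, V_b)_{b\in B}$ with $(V_b)\sim\bigotimes_b \nu_b$ and $X_b\leq V_b$ almost surely. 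Independently across $b$ given the family $(V_b)$, I sample $(\tilde Z_a)_{a\in\pi^{-1}(b)}$ from the regular conditional distribution of $\tilde\rho_b$ given that its $\mathfrak{s}(b)$-coordinate equals $V_b$. Then $(\tilde Z_a)_{a\in A}$ has law $\tilde\rho$, and coordinatewise $Y'_a\leq\tilde Z_a$: indeed, when $\mathfrak{s}\circ\pi(a) = a$ this reads $X_{\pi(a)}\leq V_{\pi(a)}=\tilde Z_a$, and otherwise $Y'_a=0\leq \tilde Z_a$. Applying Theorem~\ref{main} then yields that $\mu$ is stochastically dominated by $\tilde\rho$, and hence by $\rho$.

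The only delicate steps are the measurable existence of $h_a^b$ and of the regular conditional distribution used in the verification of $\mathbf{B}_\mu^{\tilde\rho}$; both rest on standard quantile-transform and Polish-space arguments. The auxiliary uniform variable $U_a$ in the definition of $h_a^b$ is included precisely to handle possible atoms in the marginals of $\rho_b$.
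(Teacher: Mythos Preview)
Your argument is correct. Both the verification of $\mathbf{A}_{\tilde\rho}$ and of $\mathbf{B}_\mu^{\tilde\rho}$ go through as written, and the quantile-transform construction of $\tilde\rho_b$ together with the use of regular conditional distributions is standard on $[0,\infty]^{\pi^{-1}(b)}$.

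Your route, however, differs from the paper's. Rather than shrinking $\rho_b$ to a measure with all one-dimensional marginals equal to $\nu_b$, the paper \emph{enlarges the index set}: it passes to $\tilde A := A\sqcup B$ with $\tilde\pi$ equal to $\pi$ on $A$ and to the identity on $B$, takes the section $\mathfrak s$ to be the inclusion $B\hookrightarrow\tilde A$, and sets $\tilde\rho_b := \rho_b\otimes\nu_b$ on the enlarged fibre $\pi^{-1}(b)\sqcup\{b\}$. With this choice, $\mathbf{A}_{\tilde\rho}$ is immediate from the product structure and the hypothesis $\nu_b\preccurlyeq(\text{$a$-marginal of }\rho_b)$, while $\mathbf{B}_{\tilde\mu}^{\tilde\rho}$ reduces directly to the assumed domination of the pushed-down of $\mu$ by $\bigotimes_b\nu_b$. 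One then projects back to $A$ at the very end.

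The trade-off is this: the paper's approach is more elementary --- no quantile dithering, no regular conditional distributions --- at the cost of a short detour through an auxiliary index set. Your approach stays on the original $A$ throughout and exhibits an explicit intermediate dominating measure $\tilde\rho\preccurlyeq\rho$, but leans on slightly heavier measure-theoretic tools. Both invoke Theorem~\ref{main} in the same way.
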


\begin{rema}
    This version of Corollary~\ref{coro:indep} is slightly stronger than Corollary~\ref{coro:indep-va}, as we only assume the pushforward of $\mu$ to be stochastically dominated by $\bigotimes_b\nu_b$ instead of equal to it. It is actually not hard to derive the strong version from the weak one. Indeed, any $\pi$-lift with such a dominated pushforward measure is stochastically dominated by a $\pi$-lift such that its pushforward measure is equal to $\bigotimes_{b\in B}\nu_b$ --- if $Y$ is constructed using $(X,S)$, $Y'$ using $(X',S)$, and if $X\le X'$, then we have $Y\le Y'$.
\end{rema}

\newcommand{\Sym}{\mathfrak{S}}
Let $\Sym_\pi$ denote the group of all permutations $\sigma$ of $A$ that satisfy $\forall a\in A,\,\pi\circ\sigma(a)=\pi(a)$. This group acts on $[0,\infty]^A$ via $\sigma\cdot x:a\longmapsto x_{\sigma^{-1}(a)}$. We say that a probability measure $\rho$ on $[0,\infty]^A$ is \defini{$\pi$-exchangeable} if, for every $\sigma\in \mathfrak S_\pi$, the pushforward of $\rho$ by $x\mapsto \sigma\cdot x$ is equal to $\rho$.

\begin{enonce}[remark]{Example}
    Sample a random variable $\Theta$. Depending on $\Theta$, for every $b$, choose some probability measure $\nu_{b,\Theta}$ on $[0,\infty]$. Then, conditionally on $\Theta$, let $(Z_a)_{a\in A}$ be a collection of independent random variables such that, for every $a$, the random variable $Z_a$ has distribution $\nu_{\pi(a),\Theta}$. Then, the (unconditional) distribution of $(Z_a)_{a\in A}$ is $\pi$-exchangeable.
\end{enonce}

\begin{enonce}[remark]{Example}
    Let $\pi:A\to B$ be a surjective map between finite sets. Let $k\in \{0,1,\dots,|A|\}$. Pick a random subset of $A$ uniformly at random among all sets of cardinality $k$. Then, the distribution of the indicator function of this random set is $\pi$-exchangeable.
\end{enonce}

More generally, say that $\rho$ is \defini{sufficiently symmetric} if it is possible to choose, for every $b$,  a subgroup $G_b$ of the group of permutations of $\pi^{-1}(b)$ that acts transitively on $\pi^{-1}(b)$ in such a way that for every $\sigma\in \prod_b G_b$, the pushforward of $\rho$ by $x\mapsto \sigma\cdot x$ is equal to $\rho$.

\begin{enonce}[remark]{Example}
    Every $\pi$-exchangeable probability measure $\rho$ is in particular sufficiently symmetric.
\end{enonce}

\begin{enonce}[remark]{Example}
    Let $A=B\times \mathbb{Z}/4\mathbb{Z}$ and let $\pi:(b,x)\longmapsto b$. For every $b$, independently and equiprobably, select either $\{(b,0),(b,2)\}$ or $\{(b,1),(b,3)\}$. Then, the distribution of the indicator function of all selected elements of $A$ is sufficiently symmetric but not $\pi$-exchangeable.
\end{enonce}

Let $\rho$ be a probability measure on $[0,\infty]^A$. For every section $s$, let the \defini{$s$-marginal} of $\rho$ denote the pushforward of $\rho$ by $x\longmapsto (x_{s(b)})_{b\in B}$.
Observe that if $\rho$ is sufficiently symmetric, then all its $s$-marginals are equal. We call this univoquely defined probability measure the \defini{horizontal marginal} of $\rho$.

\begin{coro}
\label{coro:exchange}
Let $\pi :A\to B$ be a surjective map between nonempty countable sets. Let $\mu$ and $\rho$ be two probability measures on $[0,\infty]^A$. Assume that:
\begin{enumerate}
    \item $\mu$ is a $\pi$-lift,
    \item $\rho$ is sufficiently symmetric,
    \item the pushdown of $\mu$ is stochastically dominated by the horizontal marginal of $\rho$.
\end{enumerate}

Then, the probability measure $\mu$ is stochastically dominated by $\rho$.
\end{coro}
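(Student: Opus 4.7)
The plan is to derive Corollary~\ref{coro:exchange} from Theorem~\ref{main} by verifying its two hypotheses for an arbitrary section $\mathfrak{s}$ of $\pi$ (which exists because $\pi$ is surjective between nonempty countable sets, so one can pick a representative in each fibre). Let $(G_b)_{b\in B}$ be the transitive symmetry groups witnessing that $\rho$ is sufficiently symmetric.

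To check assumption $\mathbf{A}_\rho$, fix $b\in B$ and $a\in\pi^{-1}(b)$. Transitivity of $G_b$ yields $\tau_b\in G_b$ with $\tau_b(\mathfrak{s}(b))=a$, i.e.\ $\tau_b^{-1}(a)=\mathfrak{s}(b)$. Extend $\tau_b$ by the identity on the other fibres to obtain $\sigma\in\prod_{b'}G_{b'}$. Since $\sigma$ acts only within $\pi^{-1}(b)$, the random vector $W_b$ is left unchanged by $\sigma$, and $(\sigma\cdot Z)_a=Z_{\sigma^{-1}(a)}=Z_{\mathfrak{s}(b)}$ for $Z\sim\rho$. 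Invariance of $\rho$ under $\sigma$ therefore gives the equality in distribution
\[
(Z_{\mathfrak{s}(b)},W_b)\;\stackrel{\text{law}}{=}\;(Z_a,W_b),
\]
so that the conditional law of $Z_{\mathfrak{s}(b)}$ given $W_b$ equals that of $Z_a$ given $W_b$. In particular the required stochastic domination in $\mathbf{A}_\rho$ holds (even as equality).

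To check $\mathbf{B}_\mu^\rho$, recall that, by sufficient symmetry, all $s$-marginals of $\rho$ coincide with the horizontal marginal; in particular the $\mathfrak{s}$-marginal of $\rho$ is the horizontal marginal. By assumption, the pushed-down of $\mu$, which is precisely the law of $(X_b)_{b\in B}:=(\max_{a\in\pi^{-1}(b)}Y_a)_{b\in B}$, is stochastically dominated by the horizontal marginal of $\rho$. Using regular conditional distributions (valid in our countable Polish setting), extend a witnessing coupling to a joint coupling of $(X_b)_{b\in B}$ with a full $\rho$-distributed $Z=(Z_a)_{a\in A}$ such that $X_b\le Z_{\mathfrak{s}(b)}$ almost surely for every $b$. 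Then for any $a\in A$, either $\mathfrak{s}\circ\pi(a)=a$ and $Y'_a=X_{\pi(a)}\le Z_{\mathfrak{s}(\pi(a))}=Z_a$, or $\mathfrak{s}\circ\pi(a)\neq a$ and $Y'_a=0\le Z_a$. Hence $Y'\le Z$ almost surely, so the law of $Y'$ is stochastically dominated by $\rho$.

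With both $\mathbf{A}_\rho$ and $\mathbf{B}_\mu^\rho$ in hand, Theorem~\ref{main} immediately gives the conclusion. The only genuinely delicate step is the promotion in $\mathbf{B}_\mu^\rho$ of the given coupling of the pushed-down of $\mu$ with the horizontal marginal of $\rho$ into a coupling with a full $\rho$-sample; this is routine via regular conditional distributions but deserves a line of justification since $A$ and $B$ are arbitrary countable sets.
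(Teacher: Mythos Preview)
Your proof is correct and follows essentially the same route as the paper: pick an arbitrary section $\mathfrak{s}$, verify $\mathbf{A}_\rho$ via the transitive fibrewise symmetry (showing the conditional laws of $Z_{\mathfrak{s}(b)}$ and $Z_a$ given $W_b$ actually coincide), verify $\mathbf{B}_\mu^\rho$ from the third assumption, and invoke Theorem~\ref{main}. Your treatment of $\mathbf{B}_\mu^\rho$ is more explicit than the paper's one-line ``it follows from our third assumption'', but the extension-to-a-full-$\rho$-sample step you spell out is exactly the content of Lemma~\ref{extension} in the paper, so nothing new is happening.
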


In order to prove Theorem~\ref{main}, the key step will be to establish it in the finite setup, namely when $A$ and $B$ are finite and $[0,\infty]$ is replaced by $[N]:=\{0,1,\dots,N\}$. This is performed in Section~\ref{sec:finite}. The general case then follows by general arguments of discrete approximation and compactness --- see Section~\ref{sec:finite-to-gen}.

The finite case is tackled by first taking care of the one-column case, which means that we further assume $B$ to be a singleton. For this, a mere greedy algorithm works --- see Section~\ref{sec:one-column}. To handle the general (still finite) case, we proceed by induction, putting carefully each element of $B$ into play one after the other. Adding one such element is made possible because of Assumption~$\mathbf{A}_\rho$ of Theorem \ref{thm:strongbs} and the one-column case. Performing this strategy requires some care: one important subtlety in the proof is that when a column is yet to be considered, it should be treated as a singleton, not as an empty column.

Before moving on to the proof, we would like to explain why this latter, ``natural'' approach does not suffice. Let us take the one-column case for granted and try to deduce from it the two-columns case, i.e.~the case where $B$ has cardinality 2. For concreteness, let us try to work out the following example. We take $A=\{\mathrm{left},\mathrm{right}\}\times\{\mathrm{top},\mathrm{bottom}\}$, $B=\{\mathrm{left},\mathrm{right}\}$ and $\pi:A\rightarrow B$ the obvious projection.
Let $X$ be a random variable having the uniform distribution on $\{0,1\}^B$, so that its components are independent Bernoulli random variables of parameter $\sfrac12$. Let $\rho$ be the uniform probability measure on $\{0,1\}^A$.
%Let $\mu$ be the uniform probability measure on $\{0,1\}^A$, which corresponds to independent Bernoulli random variables of parameter $\sfrac12$, and $\rho$ the uniform probability measure on $\{0,1\}^B$. Given a $\mu$-distributed random variable $X$, t
The random section $S$ we consider is the following: if $X=(1,1)$ then $S:i\longmapsto(i,\mathrm{top})$, otherwise we take $S:i\longmapsto(i,\mathrm{bottom})$. Looking only at the first component and using the one-column case, we can stochastically dominate what happens on the left column as desired. To conclude, it would suffice to argue that, conditionally on that, $X_{\mathrm{right}}$ still has a Bernoulli distribution of parameter $\sfrac12$.

At first sight, this may seem to work, as the components of $X$ have been taken to be independent with this distribution. However, in order to take care of the left column, we needed to know not only $X_\mathrm{left}$ but also $S(\mathrm{left})$, which revealed information on $X_{\mathrm{right}}$. In particular, if we lifted a 1 in the top-left position, then we know for sure that $X_{\mathrm{right}}$ is equal to 1. For this reason, adding columns in the naive way is not enough to solve the problem at hand. We have to take into account the fact that the conditional distribution of $X_{\mathrm{right}}$ need not be equal to (or stochastically dominated by) its unconditional distribution. This is why columns that are not handled yet will not be fully forgotten: it will be useful to remember the \emph{values} that are lifted but not \emph{where} they are lifted. %In other words, regarding $X$, we will remember everything while, regarding $S$, we will only remember its restriction to ``unzipped'' columns.
Assumptions~$\mathbf{A}_\rho$ and $\mathbf{B}_\mu^\rho$ then emerge as the natural setup to suitably perform our ``surgery of couplings''.

At last, let us mention that it is possible to give an explicit example of a suitable coupling on the first column that cannot, in any way, be extended into a suitable coupling on two columns. For instance in our setting, we can consider the following coupling: 
\[
\begin{matrix}
\left[\begin{matrix}1\\ 1\end{matrix}\right] &  \left[\begin{matrix}0\\ 1\end{matrix}\right] & \left[\begin{matrix}1\\ 0\end{matrix}\right] & \left[\begin{matrix}0\\ 0\end{matrix}\right] \\
 & & & \\
\ \ \ \left[\begin{matrix}1 & 1\end{matrix}\right] &  \ \ \ \left[\begin{matrix}1 & 0\end{matrix}\right] & \ \ \ \left[\begin{matrix}0 & 1\end{matrix}\right] & \ \ \ \left[\begin{matrix}0 & 0\end{matrix}\right]
\end{matrix}
\]
This suitable coupling cannot be extended into two columns for the following reason. Conditioned on the 
left column being $\left[\begin{matrix}1\\ 1\end{matrix}\right]$, we know that we need a $1$ on the top right corner for the coupling to work. However, this can only happen with probability at most $\sfrac{1}{2}$, therefore the coupling cannot be extended.
\begin{comment}
We give here a simple example of what can go wrong in one of the simplest possible settings: two columns of size $2$ and independent Bernoulli random variables of parameter $1/2$. More precisely, let $A=:=\{1,2\}^2$, $B:=\{1,2\}$ and $\pi:A\rightarrow B$ be the projection on the first coordinate. Let $\mu$ be the uniform measure on $\{0,1\}^A$ (i.e. independent Bernoulli random variables of parameter $1/2$) and $\rho$ the uniform measure on $\{0,1\}^B$. The random section $S$ we consider is the following: if $X= (1,1)$ then $S(1)=(1,2)$ and $S(2)=(2,2)$, otherwise we have $S(1)=(1,1)$ and $S(2)=(2,1)$. If we were only looking at the first column, a possible coupling would be the following:
\[
\begin{matrix}
\left[\begin{matrix}1\\ 1\end{matrix}\right] &  \left[\begin{matrix}0\\ 1\end{matrix}\right] & \left[\begin{matrix}1\\ 0\end{matrix}\right] & \left[\begin{matrix}0\\ 0\end{matrix}\right] \\
 & & & \\
\left[\begin{matrix}1 & 1\end{matrix}\right] &  \left[\begin{matrix}1 & 0\end{matrix}\right] & \left[\begin{matrix}0 & 1\end{matrix}\right]  \left[\begin{matrix}0 & 0\end{matrix}\right]
\end{matrix}
\]
However it is to convince oneself that it is impossible to extend this coupling onto the second column. This is why we need to consider the second column as a singleton when coupling the first column, it cannot be entirely ignored. This also explains Assumption~$\mathbf{A}_\rho$ as it will allow us to go from the coupling with the column as a singleton back to a coupling for the full column.
\end{comment}

\subsection{The one-column case}
\label{sec:one-column}

Let us start by proving the following easy lemma. For $n\ge0$, we write $[n]:=\{0,1,\dots,n\}$. 

\begin{lemm}
    \label{lem:one-column}
    Let $N$ be a positive integer and let $X$ be an $[N]$-valued random variable. Let $C$ be a finite set and let $\rho$ be a probability measure on $[N]^C$. Assume that for every $c\in C$, the distribution of $X$ is stochastically dominated by the $c$-marginal of $\rho$.

    Let $H$ be a $C$-valued random variable. Let $Y$ denote the random element of $[N]^C$ defined by $Y:c\longmapsto X\,\mathds{1}_{H=c}$. Then, the distribution of $Y$ is stochastically dominated by $\rho$.
\end{lemm}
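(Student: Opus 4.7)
The plan is to verify the stochastic domination via the Strassen criterion: in this finite setting, writing $\mu$ for the law of $Y$, the domination $\mu \preceq \rho$ is equivalent to the scalar inequality $\mu(U) \le \rho(U)$ holding for every upward-closed subset $U \subseteq [N]^C$. The task therefore reduces to checking this inequality for each such $U$.

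The key observation I would rely on is that $Y$ lives on the ``axes'': almost surely $Y = X\,e_H$, where $e_c \in [N]^C$ denotes the vector equal to $1$ at coordinate $c$ and $0$ elsewhere. After dispatching the trivial case $0 \in U$ (which by upward-closedness forces $U = [N]^C$), I would set, for each $c \in C$, the threshold
\[
m_c(U) := \min\bigl\{x \ge 1 : x\,e_c \in U\bigr\},
\]
with the convention $m_c(U) = N+1$ if this set is empty. Upward-closedness then yields $x\,e_c \in U$ if and only if $x \ge m_c(U)$, whence the exact formula
\[
\mu(U) = \sum_{c \in C} \mathbb{P}\bigl(H = c,\, X \ge m_c(U)\bigr).
\]

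The heart of the argument is then to collapse this sum onto a single well-chosen column. I would pick some $c_*$ minimizing $m_c(U)$ over $c \in C$; because $m_c(U) \ge m_{c_*}(U)$ for every $c$ and because the events $\{H = c\}_{c \in C}$ partition the probability space, one obtains
\[
\mu(U) \le \sum_{c \in C} \mathbb{P}\bigl(H = c,\, X \ge m_{c_*}(U)\bigr) = \mathbb{P}\bigl(X \ge m_{c_*}(U)\bigr).
\]
The hypothesis that $X$ is stochastically dominated by the $c_*$-marginal of $\rho$ then gives $\mathbb{P}(X \ge m_{c_*}(U)) \le \rho(\{z : z_{c_*} \ge m_{c_*}(U)\})$, and the inclusion $\{z : z_{c_*} \ge m_{c_*}(U)\} \subseteq U$ (a direct consequence of $m_{c_*}(U)\,e_{c_*} \in U$ together with the upward-closedness of $U$) closes the chain: $\mu(U) \le \rho(U)$.

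The only (rather mild) subtlety I anticipate is recognizing that the sparsity of $Y$ --- at most one nonzero coordinate --- allows one to ``project'' any upward-closed event onto a single axis, on which the marginal hypothesis applies directly. In effect, the whole argument collapses onto the column $c_*$ carrying the smallest threshold, which is precisely why nothing beyond marginal stochastic domination is needed in the hypotheses.
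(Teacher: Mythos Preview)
Your proof is correct. The argument via Strassen's criterion is clean: the key insight that $Y$ lives on the axes, so that any upward-closed event restricted to the support of $\mu$ is governed by the single threshold $m_{c_*}(U)$, is exactly what makes the marginal hypothesis sufficient. The edge case $m_{c_*}(U)=N+1$ (when no axis meets $U$) is harmless since then $\mu(U)=0$, but you might want to say so explicitly.

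Your route differs genuinely from the paper's. The paper builds an explicit monotone coupling by a greedy algorithm: it well-orders the nonzero states $(i,j)\in\{1,\dots,N\}\times C$ lexicographically (large values first), and at each step places the mass $\mathbb{P}(X=i,\,H=j)$ into the region $\{\beta:\beta_j\ge i\}$, checking there is always enough room. This is longer but fully constructive and invokes no external tool. Your argument, by contrast, is non-constructive --- it verifies the Strassen inequality and appeals to the existence of a monotone coupling --- but is considerably shorter and arguably more transparent about \emph{why} the lemma holds: everything collapses onto the column with the smallest threshold. The paper itself notes (in the remark following its proof) that a Strassen-based proof is available, but opted for the explicit construction; you have essentially written out that alternative.
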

\newcommand{\Ind}{\mathscr{I}}
\newcommand{\smoller}{\preccurlyeq}
\newcommand{\stoch}{\preccurlyeq}

\begin{proof}
    Without loss of generality, we will assume that $C=[M]$, for some $M$. We consider the set $\Ind=\left(\{1,\dots,N\}\times [M]\right) \sqcup \{0\}$. This set encodes all possible values for $Y$. More precisely, the state $0$ encodes $\varphi_0:=0\in [N]^C$ and $(i,j)$ encodes $\varphi_{i,j}:= i\,\mathds{1}_{\{j\}}$. We endow $\Ind$ with the following well-ordering of lexicographic nature. We declare 0 to be larger than any element of $\Ind$, and $(i,j)\smoller (i',j')$ holds if and only if $i> i'$ or both $i=i'$ and $j\le j'$. Please note that the inequalities for $i$ and $j$ go in reverse direction.
In words, introducing $\smoller$ means that we want to take care of the possible states in the following order:
{\small\[
(N,1)\,\,(N,2)\,\dots\,(N,M)\,\,(N-1,1)\,\,(N-1,2)\,\dots\,(N-1,M)\,\dots\,(1,1)\,\,(1,2)\,\dots\,(1,M)\,\,0.
\]}
The rest of the proof can be summarised as follows: proceeding greedily in this order works.

Let us introduce some formalism. Denote by $\mu$ the distribution of $Y$. A \emph{subcoupling} is a measure $\nu$ on $[N]^C\times[N]^C$ such that the following two conditions hold:
\begin{itemize}
    \item for every $\alpha\in [N]^C$, we have $\sum_\beta \nu(\alpha,\beta)\le \mu(\alpha)$,
    \item for every $\beta\in [N]^C$, we have $\sum_\alpha \nu(\alpha,\beta)\le \rho(\beta)$.
\end{itemize}
Given some $I\in\Ind$, say that a subcoupling is \emph{suitable for step $I$} if it satisfies the following two conditions:
\begin{itemize}
    \item for every $J\smoller I$, we have $\sum_\beta \nu(\varphi_J,\beta)= \mu(\varphi_J)$
    \item for every $\alpha$ which is not of the form $\varphi_J$ for $J\smoller I$, we have $\sum_\beta \nu(\alpha,\beta)=0$.
\end{itemize}
Our purpose is to prove that there is a subcoupling that is suitable for step $0$. If we reach that goal, then we are done. Indeed, as the support of $\mu$ is a subset of $\{\varphi_I\,:\,I\in\Ind\}$, the first marginal will be equal to $\mu$, thus yielding that the total mass of the subcoupling is 1; and as $\rho$ has mass 1 as well, the second marginal will also be equal to $\rho$ instead of satisfying only the inequality of subcouplings.

Let us prove by induction on $(\Ind,\smoller)$ that, for every $I\in\Ind$, there is a subcoupling suitable for step $I$.
We start with $I=(N,1)$. Using the assumption of the lemma for $c=1$, we have $\sum_{\beta:\beta(1)=N}\rho(\beta)\ge \mathbf{P}(X=N)\ge\mathbf{P}(Y=\varphi_{N,1})$, so indeed $\rho$ has enough room in $\{\beta\,:\,\beta(1)=N\}$ to allow such a subcoupling.

Now let $I$ be such that there is a subcoupling suitable for step $I$, and let us pick $\nu$ such a subcoupling. If $I=0$, then we are done, so we assume that $I=(i,j)$. There are three cases to handle:
\begin{enumerate}
    \item when $j<M$,
    \item when $j=M$ and $i>1$,
    \item when $(i,j)=(1,M)$.
\end{enumerate}

Let us consider the first case. We want to prove that there is a subcoupling suitable for step $(i,j+1)$. Using our assumption for $c=j+1$, we have \[\mathrm{Space}:=\sum_{\beta:\beta(j+1)\ge i}\rho(\beta)\ge \mathbf{P}(X\ge i)=\mathbf{P}(X= i)+\sum_{i'>i}\mathbf{P}(X= i'),\] where the last sum is the null empty sum if $i=N$. Let us compute how much of this space has been used already, and how much we need to use now. We need to store a mass of $\mathrm{Need}:=\mathbf{P}(X=i,\,H=j+1)$. On the other hand, the total mass of $\nu$ is $\mathrm{Used}:=\sum_{j'\le j}\mathbf{P}(X= i,\,H=j')+\sum_{i'>i}\mathbf{P}(X= i')$. Therefore, the inequality \[\mathbf{P}(X=i)\ge \mathbf{P}(X=i,\,H=j+1)+ \sum_{j'\le j}\mathbf{P}(X= i,\,H=j')\] guarantees that $\mathrm{Space}-\mathrm{Used}\ge\mathrm{Need}$, thus yielding the existence of a subcoupling that is suitable for step $(i,j+1)$.

Let us move on to the second case. We want to prove that there is a subcoupling suitable for step $(i-1,1)$. Let us reset the notation $(\mathrm{Space},\mathrm{Need},\mathrm{Used})$. By assumption, we have $\mathrm{Space}:=\sum_{\beta:\beta(1)\ge i-1}\rho(\beta)\ge \mathbf{P}(X\ge i-1)=\mathbf{P}(X= i-1)+\sum_{i'\ge i}\mathbf{P}(X= i')$. We need to store a mass of $\mathrm{Need}:=\mathbf{P}(X=i-1,\,H=1)$, and the total mass of $\nu$ is $\mathrm{Used}:=\sum_{i'\ge i}\mathbf{P}(X= i')$. Therefore, the trivial inequality \[\mathbf{P}(X=i-1)\ge \mathbf{P}(X=i-1,\,H=1)\] guarantees that $\mathrm{Space}-\mathrm{Used}\ge\mathrm{Need}$, thus yielding the existence of a subcoupling that is suitable for step $(i-1,1)$.

At last, we need to take care of the case $(i,j)=(1,M)$. Then, the only remaining configuration to handle is the null configuration. Since it is the minimal configuration, it imposes no constraint at all on the mass repartition, so this step holds trivially. We could also say that $\mathrm{Space}:=1$, $\mathrm{Need}:=\mathbf{P}(X=0)$, $\mathrm{Used}:=\sum_{i\ge 1}\mathbf{P}(X=i)$, and that the inequality  $\mathrm{Space}-\mathrm{Used}\ge\mathrm{Need}$ clearly holds --- and is an equality. This concludes the proof.
\end{proof}

\begin{rema}
    Lemma~\ref{lem:one-column} can also be proved using Strassen's Theorem for monotone couplings \cite{s,l,a}, which itself admits simple proofs for finite spaces --- for instance using Farkas' Lemma or Hall's Marriage Theorem. We deemed more instructive to provide the above proof, which requires no tool at all and provides a straightforward construction of a suitable coupling.
\end{rema}

\subsection{From one column to the finite case}
\label{sec:finite}

The goal of this section is to prove Proposition~\ref{finite}, which is a finite version of Theorem~\ref{main}. The reason why we restrict ourselves to a finite setup is to avoid technical questions of measurability.

\begin{prop}\label{finite}%\textbf{Stochastic domination of finite lift}\\
Let $N$ be a positive integer. Let $\pi: A\rightarrow B$ be a surjective map between nonempty finite sets. Let $\mathfrak s$ be a  section of $\pi$. Let $\mu$ and $\rho$ be two probability measures on $[N]^A$. Assume that $\mu$ is a $\pi$-lift and that the conditions $\mathbf{A}_\rho$ and $\mathbf{B}_\mu^\rho$ are met.
Then, the probability measure $\mu$ is stochastically dominated by $\rho$.
\end{prop}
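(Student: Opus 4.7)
The strategy is to build a coupling of $\mu$ and $\rho$ with the $\mu$-variable pointwise dominated, by starting from the coupling supplied by $\mathbf{B}_\mu^\rho$ and then \emph{expanding} one $\pi$-fibre of $B$ at a time, using $\mathbf{A}_\rho$ and Lemma~\ref{lem:one-column} at each expansion. Fibres that have not yet been expanded will be kept as the singletons $\{\mathfrak s(b)\}$ rather than dropped altogether: this is what preserves, across the induction, the stochastic domination that powers the next expansion.

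Enumerate $B = \{b_1, \dots, b_n\}$ and set, for $0 \leq k \leq n$,
\[A_k := \bigcup_{j \leq k} \pi^{-1}(b_j) \cup \{\mathfrak s(b_j) : j > k\},\]
so $A_0 = \mathfrak s(B)$ and $A_n = A$. For a $\mu$-distributed $Y$, write $M_b := \max_{a \in \pi^{-1}(b)} Y_a$ and let $H_b$ be the almost surely unique index attaining this maximum, with the convention $H_b := \mathfrak s(b)$ when $M_b = 0$. Define the partial lift $Y^{(k)} \in [N]^{A_k}$ by putting $M_{b_j}$ at $H_{b_j}$ on expanded fibres ($j \leq k$) and $M_{b_j}$ at $\mathfrak s(b_j)$ on collapsed fibres ($j > k$); thus $Y^{(n)} = Y$, and $Y^{(0)}$ equals the restriction to $A_0$ of the random variable $Y'$ arising in $\mathbf{B}_\mu^\rho$.

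The induction hypothesis at step $k$ is the existence of a joint distribution carrying $Y \sim \mu$ together with $Z^{(k)} \sim \rho$ and satisfying $Y^{(k)} \leq Z^{(k)}|_{A_k}$ almost surely. The base case $k=0$ is directly supplied by $\mathbf{B}_\mu^\rho$, enlarged to also carry $Y$. For the step $k \to k+1$, set $C := \pi^{-1}(b_{k+1})$ and $W := Z^{(k)}|_{A \setminus C}$, and leave $Z|_{A \setminus C} = W$ untouched. The hypothesis at the coordinate $\mathfrak s(b_{k+1}) \in A_k$ gives $M_{b_{k+1}} \leq Z^{(k)}_{\mathfrak s(b_{k+1})}$; since under $\rho$ the conditional distribution of $Z_{\mathfrak s(b_{k+1})}$ given $W$ is the $\mathfrak s(b_{k+1})$-marginal of the $\rho$-conditional $\rho_W$ on $[N]^C$, this yields that, conditionally on $W$, the law of $M_{b_{k+1}}$ is stochastically dominated by that $\mathfrak s(b_{k+1})$-marginal. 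Assumption $\mathbf{A}_\rho$ now implies that this marginal is in turn dominated by every $a$-marginal of $\rho_W$ with $a \in C$. Applying Lemma~\ref{lem:one-column} conditionally on $W$ to the one-column data $(\rho_W, M_{b_{k+1}}, H_{b_{k+1}})$ produces $Z^{(k+1)}|_C$ with $\rho_W$-distribution and satisfying $Z^{(k+1)}_{H_{b_{k+1}}} \geq M_{b_{k+1}}$ almost surely; combining this with $Z^{(k+1)}|_{A \setminus C} = W$ preserves the marginal $Z^{(k+1)} \sim \rho$ and one verifies directly that $Y^{(k+1)} \leq Z^{(k+1)}|_{A_{k+1}}$.

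Iterating until $k = n$ yields $Y = Y^{(n)} \leq Z^{(n)} \sim \rho$, which is the desired coupling witnessing $\mu \stoch \rho$. The main subtlety is the conditional application of Lemma~\ref{lem:one-column}: one must disintegrate the current joint law with respect to $W$, run the one-column coupling slice by slice, and reassemble everything into a measurable kernel that both preserves the $\rho$-marginal and leaves the inequalities on $A_k \setminus C$ intact. The singleton-not-empty treatment of unexpanded columns is precisely what makes the induction self-sustaining: dropping $\mathfrak s(b_{k+1})$ from $A_k$ would also remove the inequality $M_{b_{k+1}} \leq Z^{(k)}_{\mathfrak s(b_{k+1})}$ that triggers the next application of $\mathbf{A}_\rho$ and Lemma~\ref{lem:one-column}.
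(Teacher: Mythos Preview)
Your proof is correct and follows essentially the same route as the paper: enumerate $B$, keep not-yet-expanded fibres as the singletons $\{\mathfrak s(b)\}$, use $\mathbf{B}_\mu^\rho$ for the base case, and at each step condition on $W=Z|_{A\setminus C}$, combine the almost-sure bound $M_{b_{k+1}}\le Z_{\mathfrak s(b_{k+1})}$ with $\mathbf{A}_\rho$ to feed Lemma~\ref{lem:one-column}, then reassemble. The paper formalises your ``disintegrate and reassemble'' paragraph via two auxiliary lemmas (a pull-back lemma playing the role of Lemma~\ref{extension} and an integration-of-monotone-couplings lemma), and phrases the induction as $\mu_n\preccurlyeq\rho$ for intermediate flattened measures rather than carrying the full $Y$ along, but the content is the same.
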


We interpret $A$ as an array with columns indexed by $B$, and $\mathfrak s$ as selecting one favourite position in each column. The condition $\mathbf{B}_\mu^\rho$ ensures that the ``$\mathfrak s$-flattened'' version of $\mu$ is dominated by $\rho$. This fact will serve as the base case of an induction proof to show that we can ``unflatten'' the columns one by one while keeping stochastic domination by $\rho$. More precisely, we will use the stochastic domination of the ``flattened'' measure to dominate the one before flattening. To do so, we need two tools. The first one is a way to pull back our monotone coupling, which is the object of Lemma~\ref{extension}. The second one is a way to use Lemma~\ref{lem:one-column} in order to resample a column in a monotonic way, without breaking the rest of our coupling. This is enabled by Lemma~\ref{integration de couplage monotone}.

It will be convenient to study couplings for probability measures. Given $\xi_1$ and $\xi_2$ two probability measures on spaces $E_1$ and $E_2$, a \defini{coupling} of $\xi_1$ and $\xi_2$ is a probability measure $\eta$ on $E_1\times E_2$ such that, for every $i$, the pushforward of $\eta$ by $(x_1,x_2)\longmapsto x_i$ is $\xi_i$.

\begin{lemm}\label{extension}%\textbf{Pullback Lemma}\\
    Let $h_1:E_1\to E'_1$ and $h_2:E_2\to E'_2$ be two functions between countable sets. Let $\nu_1$ be a probability measure on $E_1$ and $\nu_2$ be a probability measure on $E_2$. Let $\eta$ be a coupling between the pushforward measures $\nu_1\circ h_1^{-1}$ and $\nu_2\circ h_2^{-1}$.
    
    Then, there is a coupling between $\nu_1$ and $\nu_2$ such that its pushforward by $(h_1,h_2)$ is $\eta$.
\end{lemm}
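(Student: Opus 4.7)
The plan is to define the coupling explicitly by a formula, exploiting the fact that all spaces are countable so that everything reduces to summing point masses. The intuition is to route the mass that $\eta$ places at a pair $(x'_1,x'_2)\in E'_1\times E'_2$ back to the fibres $f_1^{-1}(x'_1)$ and $f_2^{-1}(x'_2)$, independently within each fibre, according to the relative weights given by $\nu_1$ and $\nu_2$.

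Concretely, for $(x_1,x_2)\in E_1\times E_2$ I would set
\[
\zeta(\{(x_1,x_2)\}):=\eta\bigl(\{(f_1(x_1),f_2(x_2))\}\bigr)\cdot\frac{\nu_1(\{x_1\})}{\nu_1(f_1^{-1}(f_1(x_1)))}\cdot\frac{\nu_2(\{x_2\})}{\nu_2(f_2^{-1}(f_2(x_2)))},
\]
with the usual convention $0/0=0$. This is well-posed: if $\nu_1(f_1^{-1}(f_1(x_1)))=0$ then $\nu_1(\{x_1\})=0$ as well, and moreover the pushforward marginal $\nu_1\circ f_1^{-1}$ vanishes at $f_1(x_1)$, hence so does $\eta(\{(f_1(x_1),\cdot)\})$; the symmetric remark handles $\nu_2$. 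So the formula unambiguously defines a nonnegative measure on $E_1\times E_2$.

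The verification is a matter of collecting sums. For the first marginal, summing over $x_2$ first partitions $E_2$ along the fibres of $f_2$: inside each fibre $f_2^{-1}(x'_2)$ the $\nu_2$-weights add up to $1$ (or the whole contribution vanishes by the convention above), and then summing over $x'_2$ yields the first marginal of $\eta$, which by assumption equals $\nu_1\circ f_1^{-1}(\{f_1(x_1)\})$; this cancels the denominator and leaves exactly $\nu_1(\{x_1\})$. The second marginal works symmetrically. For the pushforward by $(f_1,f_2)$, summing $\zeta(\{(x_1,x_2)\})$ over $(x_1,x_2)\in f_1^{-1}(x'_1)\times f_2^{-1}(x'_2)$ factors the two conditional sums, each of which equals $1$ (or both factors vanish together with $\eta$), leaving $\eta(\{(x'_1,x'_2)\})$.

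I do not expect any real obstacle: countability trivialises all measurability issues, and the only subtlety is the careful handling of null fibres, which is absorbed by the $0/0=0$ convention once one observes that such fibres carry no mass of $\eta$ either. In a measurable, uncountable setting the same construction would require a regular conditional disintegration of $\nu_i$ along $f_i$, but here sums over points suffice.
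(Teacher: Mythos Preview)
Your proof is correct and is essentially the same as the paper's: the paper builds the coupling as the $\eta$-weighted mixture of couplings of the conditional laws of $X_i$ given $f_i(X_i)=x'_i$, suggesting the product of these conditionals as one valid choice, and your explicit formula is exactly that product written out pointwise. The only cosmetic difference is that the paper allows an arbitrary coupling of the two conditionals in each fibre-pair, while you pick the independent one; your handling of null fibres is also slightly more explicit.
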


\begin{proof}
Let $X_1$ be a $\nu_1$-distributed random variable and $X_2$ be a $\nu_2$-distributed random variable. For every $(x'_1,x'_2)\in E'_1\times E'_2$ such that $\eta(x'_1,x'_2)>0$, we consider some probability measure on $E_1\times E_2$ with first marginal the conditional distribution of $X_1$ given $h_1(X_1)=x'_1$ and  with second marginal the conditional distribution of $X_2$ given $h_2(X_2)=x'_2$ --- such a probability measure exists, one can for example take the product of these measures. The lemma follows by considering the convex combination of these measures, with convexity coefficients given by $\eta(x'_1,x'_2)$.
\end{proof}

Lemma~\ref{extension} can be used to reprove transitivity of stochastic domination. In this section, we use the notation $\stoch$ for stochastic domination: we write $\xi\stoch\zeta$ to mean that $\xi$ is stochastically dominated by $\zeta$.

\begin{coro}%\textbf{Transitivity of the stochastic domination}\\
Let $\xi_1$, $\xi_2$ and $\xi_3$ be three probability measures on a countable ordered space $E$ such that $\xi_1 \stoch \xi_2$ and $\xi_2 \stoch \xi_3$. Then, we have $\xi_1 \stoch \xi_3$.
\end{coro}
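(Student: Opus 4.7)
The plan is to glue together the two monotone couplings along their common marginal $\xi_2$, which is the classical strategy, and to show that the gluing is made legitimate by Lemma~\ref{extension}. Concretely, let $\eta_{12}$ be a coupling of $\xi_1$ and $\xi_2$ supported on $\{(x,y)\in E^2\,:\,x\le y\}$ (which exists by the assumption $\xi_1\stoch\xi_2$), and let $\eta_{23}$ be a coupling of $\xi_2$ and $\xi_3$ supported on $\{(y,z)\in E^2\,:\,y\le z\}$ (which exists by $\xi_2\stoch\xi_3$).

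I would then apply Lemma~\ref{extension} in the following setting. Take $E_1:=E\times E$ with $\nu_1:=\eta_{12}$, and $E_2:=E\times E$ with $\nu_2:=\eta_{23}$. Let $f_1:E_1\to E$ be the second-coordinate projection and $f_2:E_2\to E$ be the first-coordinate projection, so that $\nu_1\circ f_1^{-1}=\xi_2=\nu_2\circ f_2^{-1}$. Take $\eta$ to be the diagonal coupling of $\xi_2$ with itself, i.e.\ the pushforward of $\xi_2$ by $y\mapsto(y,y)$. Lemma~\ref{extension} provides a coupling $\kappa$ of $\eta_{12}$ and $\eta_{23}$ whose pushforward by $(f_1,f_2)$ is $\eta$. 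Identifying the equal coordinates, $\kappa$ is a probability measure on $E^3$ whose marginals are $\xi_1$, $\xi_2$, $\xi_3$ in this order, and under which the first coordinate is almost surely less than or equal to the second (because $\eta_{12}$ is monotone) and the second is almost surely less than or equal to the third (because $\eta_{23}$ is monotone).

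Finally, I would use transitivity of the partial order $\le$ on $E$: under $\kappa$, the first coordinate is therefore almost surely less than or equal to the third coordinate. Pushing $\kappa$ forward to the first and third coordinates produces a coupling of $\xi_1$ and $\xi_3$ supported on $\{(x,z)\in E^2\,:\,x\le z\}$, which is exactly what we need to conclude $\xi_1\stoch\xi_3$.

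I do not foresee any serious obstacle: the only minor subtlety is to phrase the gluing as an instance of Lemma~\ref{extension} (rather than constructing it by hand via conditional measures), and to keep clear which coordinate plays which role. Countability of $E$ is used only to invoke Lemma~\ref{extension}; transitivity itself does not require it.
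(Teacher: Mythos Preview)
Your proof is correct and follows essentially the same approach as the paper: glue the two monotone couplings along their common $\xi_2$-marginal via Lemma~\ref{extension}, then use transitivity of $\le$. The only cosmetic difference is in how the lemma is instantiated: the paper takes $\nu_2:=\xi_3$ with $f_2=\mathrm{id}_E$ and lets $\eta$ be the monotone coupling of $\xi_2$ and $\xi_3$, whereas you take $\nu_2:=\eta_{23}$ with $f_2$ the first-coordinate projection and $\eta$ the diagonal coupling of $\xi_2$ with itself; both choices yield the same triple $(X_1,X_2,X_3)$.
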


\begin{proof}
Take $\nu_1$ to be the distribution of a monotone coupling between $\xi_1$ and $\xi_2$, and set $\nu_2 := \xi_3$. Let $h_1: E\times E \rightarrow E$ be the projection on the second coordinate. As we assumed $\xi_2 \stoch \xi_3$, we can pick $\eta$ the distribution of some monotone coupling  between $\nu_1 \circ h_1^{-1} = \xi_2$ and $\nu_2$.
Applying Lemma~\ref{extension} with $h_2=\mathrm{id}_E$, we get a triplet of random variables $(X_1,X_2,X_3)$ such that $(X_1,X_2)$ is $\nu_1$-distributed and $(X_2,X_3)$ is $\eta$-distributed. As both $\nu_1$ and $\eta$ are monotone couplings, the chain of inequalities $X_1\leq X_2$ and $X_2 \leq X_3$ holds almost surely. Therefore, $(X_1,X_3)$ is a monotone coupling between $\xi_1$ and $\xi_3$.
\end{proof}

The next lemma enables us to integrate monotonone couplings.

\begin{lemm}\label{integration de couplage monotone}%\textbf{Monotone coupling integration}\\
\label{lem:conditional}
Let $C$ be a countable ordered set and $D$ a countable set. Let $f$ be a map from $C\times C$ to $D$. Let $\nu_1$ and $\nu_2$ be probability measures on $C$. Let $(X_1, X_2)$ be a coupling of $\nu_1$ and $\nu_2$. This coupling is not assumed to be monotone.\\
Assume that for all $d\in D$ such that $E_d := \{f(X_1, X_2) = d\}$ has positive probability, there is a monotone (increasing) coupling $(X^d_1, X^d_2)$ between the conditional distribution of $X_1$ given $E_d$ and that of $X_2$ given $E_d$.\\
Then, the probability measure $\nu_1$ is stochastically dominated by $\nu_2$.
\end{lemm}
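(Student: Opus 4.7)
The plan is to build a monotone coupling of $\nu_1$ and $\nu_2$ by mixing the hypothesised monotone couplings according to the distribution of $f(X_1,X_2)$. First I would set $p_d := \mathbf{P}(E_d)$ for every $d \in D$; since $f$ takes values in the countable set $D$, the events $(E_d)_{d\in D}$ form a partition (up to a null set) of the ambient probability space, and in particular $\sum_d p_d = 1$. For each $d$ with $p_d > 0$, let $\mu_1^d$ and $\mu_2^d$ denote the conditional distributions of $X_1$ and $X_2$ given $E_d$; by the hypothesis, there is a monotone coupling $\pi_d$ of $\mu_1^d$ and $\mu_2^d$, namely a probability measure on $C\times C$ supported on $\{(c_1,c_2)\,:\,c_1\le c_2\}$ whose marginals are $\mu_1^d$ and $\mu_2^d$.

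Next I would define the mixture
\[
\pi := \sum_{d\in D : p_d>0} p_d\,\pi_d,
\]
which is a well-defined probability measure on $C\times C$ because $D$ is countable and the coefficients sum to 1. Its first marginal is $\sum_d p_d\,\mu_1^d$, which is precisely the distribution of $X_1$ by the total probability formula, i.e.~$\nu_1$; by the same argument its second marginal is $\nu_2$. Since each $\pi_d$ is supported on $\{(c_1,c_2)\,:\,c_1\le c_2\}$, so is $\pi$. Therefore $\pi$ is a monotone coupling of $\nu_1$ and $\nu_2$, which by definition yields $\nu_1 \stoch \nu_2$.

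There is essentially no hard step: the only thing to be vigilant about is that the partition $(E_d)_{d\in D}$ genuinely covers a full-probability event (which holds because $f$ takes values in $D$) and that the mixture is indexed by a countable set, so no measurability issue arises from the summation. An alternative would be to invoke Lemma~\ref{extension} with $f_1=f_2=f$, $\nu_1$ replaced by the joint law of $(X_1,X_2)$ conditioned column-by-column, but the direct mixture argument above is more transparent and avoids any detour.
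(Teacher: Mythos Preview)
Your proof is correct and is essentially identical to the paper's: both define the mixture $\sum_{d:\,\mathbb{P}(E_d)>0}\mathbb{P}(E_d)\,\rho_d$ of the given conditional monotone couplings and observe that its marginals are $\nu_1$ and $\nu_2$ while monotonicity is preserved. Your write-up merely spells out the marginal computation and the countability remark a bit more explicitly than the paper's two-line version.
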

\begin{proof}
For every $d$ such that $\mathbb{P}(E_d)>0$, let $\rho_d$ be the distribution of a suitable $(X^d_1,X^d_2)$. The distribution $\sum_d \mathbb{P} (E_d) \rho_d$ is then that of a monotone coupling between $\nu_1$ and $\nu_2$.
\end{proof}

We are now ready to establish Proposition~\ref{finite}.

\begin{proof}[Proof of Proposition~\ref{finite}]
Take $(A,B,\pi,\mathfrak{s},\mu,\rho)$ as in Proposition~\ref{finite}. Without loss of generality, we assume that $B = \llbracket 1, K\rrbracket:=\{1,\dots,K\}$. Our goal is to prove that $\rho \succcurlyeq \mu$.
To do so, we will work by considering ``flattened'' versions of $\mu$, and recursively ``unflatten'' them in a proper coupling. 

%Let us first explain precisely our way of proceeding.

We define a sequence of measures $(\mu_n)_{n\in \llbracket 0, K\rrbracket}$ by induction from $K$ until 0. We set $\mu_K:=\mu$. For $n\in \llbracket 1,K\rrbracket$, let $f_n : \llbracket 0, N\rrbracket^A \rightarrow \llbracket 0, N\rrbracket^A$ be defined as follows
\begin{itemize}
    \item if $\pi(a)\neq n$, then $f_n(y)_a=y_a$,
    \item if $a=\mathfrak{s}(n)$, then $f_n(y)_a=\max_{a'\in \pi^{-1}(n)} y_{a'}$,
    \item otherwise, $f_n(y)_a=0$.
\end{itemize}
For $n\in\llbracket 0,K-1\rrbracket$, the measure $\mu_n$ is then defined as the pushforward of $\mu_{n+1}$ by $f_{n+1}$.
By definition and because of Assumption~$\mathbf{B}_\mu^\rho$, we have $\mu_0 \stoch \rho$. Let us now establish the induction step $(\mu_n \stoch \rho)\implies (\mu_{n+1} \stoch \rho)$.

Let $n\in\llbracket0, K-1\rrbracket$ be such that $\mu_n \stoch \rho$. Our goal is to show that $\mu_{n+1} \stoch \rho$.
Let us apply Lemma~\ref{extension} with $E_1=E_1'=E_2=E'_2=\llbracket 0, N\rrbracket^A$, $h_1=f_{n+1}$, $h_2$ the identity map, $\nu_1=\mu_{n+1}$, $\nu_2=\rho$, and $\eta$ a monotone coupling between $\mu_n$ and $\rho$ --- such an $\eta$ exists as we assumed $\mu_n\stoch \rho$.
This enables us to take a coupling $(Y,Z)$ of $\mu_{n+1}$ and $\rho$ such that the following is true. If $\ell_{n+1}$ is set to be $\text{argmax}_{a\in \pi^{-1}(n+1)}(Y_a)$ when it is well defined and $\mathfrak s(n+1)$ otherwise then, we almost surely have:
\begin{itemize}    
    \item $Y_a = 0$ for every $a\in \pi^{-1}(n+1)\backslash \{\ell_{n+1}\}$,
    \item $Z_a\geq Y_a$ for every $a\neq \ell_{n+1}$,
    \item $Z_{\mathfrak s(n+1)}\geq Y_{\ell_{n+1}}=\max_{a\in\pi^{-1}(n+1)}Y_a$.
\end{itemize}
Note that this coupling has no reason to be monotone, so more work is needed to deduce stochastic domination.

Let us consider the forgetful function $g_{n+1}:\llbracket 0, N\rrbracket^{A}\times \llbracket 0, N\rrbracket^{A}\longrightarrow \llbracket 0, N\rrbracket^{A\backslash\pi^{-1}(n+1)}$ defined by $((y_a)_{a\in A},(z_a)_{a\in A})  \longmapsto (z_a)_{a\notin \pi^{-1}(n+1)}$.
We want to apply Lemma~\ref{lem:conditional} with $g_{n+1}$, $Z$ and $Y$. Before doing so, let us gather a few observations:
\begin{enumerate}
    \item Since we almost surely have domination outside $\pi^{-1}(n+1)$, for any given $d\in \llbracket 0, N\rrbracket^{A\backslash\pi^{-1}(n+1)}$, the random variable $Y$ conditioned on the event $E_d:= \{g_{n+1} (Y, Z) = d\}$ is almost surely dominated by $d$ outside $\pi^{-1}(n+1)$.
    \item\label{item:couple} Thanks to this and Lemma~\ref{extension}, for any given $d$, we can find a monotone coupling $(Y^d, Z^d)$ as soon as there is stochastic domination of the conditional distribution of $(Y_a)_{a\in \pi^{-1}(n+1)}$ given $E_d$ by that of $(Z_a)_{a\in \pi^{-1}(n+1)}$ given $E_d$.
    \item By construction, the distribution of $(Y_a)_{a\in \pi^{-1}(n+1)}$ conditioned on $E_d$ is a $\pi$-lift.
    \item\label{item:subtle} As the inequality  $\max_{a\in \pi^{-1}(n+1)} Y_a\le Z_{\mathfrak s(n+1)}$ holds almost surely, it also holds almost surely conditioned on $E_d$. In particular, the distribution of $\max_{a\in \pi^{-1}(n+1)} Y_a$ conditioned on $E_d$ is stochastically dominated by that of $Z_{\mathfrak s(n+1)}$ conditioned on $E_d$.
    \item\label{item:aass} Finally, Assumption $\mathbf{A}_\rho$ assures us that for any $a\in \pi^{-1}(n+1)$, conditioned on $E_d$, the distribution of $Z_a$ stochastically dominates that of $Z_{\mathfrak s(n+1)}$.
\end{enumerate}

Due to Observations~\ref{item:subtle} and \ref{item:aass}, transitivity of stochastic domination ensures that for any given $d$ and every $a\in \pi^{-1}(n+1)$, conditioned on $E_d$, the distribution of $Z_a$ stochastically dominates that of $\max_{\pi^{-1}(n+1)} Y$. This allows us to use Lemma~\ref{lem:one-column}, which yields that the distribution of $(Y_a)_{a\in\pi^{-1}(n+1)}$ conditioned on $E_d$ is stochastically dominated by that of $(Z_a)_{a\in\pi^{-1}(n+1)}$ conditioned on $E_d$. Coupled with Observation~\ref{item:couple}, this result implies that conditioned on any event of the form $E_d$, the distribution of $Z$ stochastically dominates that of $Y$. We can now use Lemma~\ref{integration de couplage monotone} to conclude that the distribution of $Z$ --- in other words $\rho$ --- stochastically dominates that of $Y$ --- that is, $\mu_{n+1}$.
The induction step is thus established. By induction, we get $\rho\succcurlyeq\mu_K=\mu$.
\end{proof}

\subsection{From the finite setup to the general case}
\label{sec:finite-to-gen}

Let us now use Proposition~\ref{finite} to establish Theorem~\ref{main}. Naturally, the strategy is to discretise the problem, apply Proposition~\ref{finite}, and then take limits.

For $n\ge 0$, let $f_n$ be the function defined on $[0,\infty]$ by
\[f_n:x\longmapsto\max\left\{\tfrac k n:\,\tfrac k n\le x\text{ and }0\le k\le n^2\right\}.\] Each $f_n$ is weakly increasing and the sequence $(f_n)$ converges pointwise to the identity of $[0,\infty]$.

Now, let us enumerate $A=\{a_1,a_2,\dots\}$ in such a way that for all $i$ and $j$, we have $a_i=\mathfrak{s}\circ \pi(a_j)\implies i\le j$. Such an enumeration can for instance be obtained by starting with an arbitrary enumeration and then, for each $b\in B$, swapping $\mathfrak{s}(b)$ with the first element of $\pi^{-1}(b)$ to appear in this enumeration. For $n\ge0$, define the map $g_n:[0,\infty]^A\to [0,\infty]^A$ by $g_n(x):a_i\mapsto f_n(x_{a_i})\,\mathds{1}_{i\le n}$. Each map $g_n$ is weakly increasing and the sequence $(g_n)$ converges pointwise to the identity of $[0,\infty]^A$. For $n\ge 0$, define the probability measure $\mu_n$ (resp. $\rho_n$) to be the pushforward of $\mu$ (resp. $\rho$) by $g_n$. In other words, if a random variable $Y$ has distribution $\mu$ (resp. $\rho$), then the distribution of $g_n(Y)$ is denoted by $\mu_n$ (resp. $\rho_n$).

Observe that for every $n$, the measures $\mu_n$ and $\rho_n$ satisfy the assumptions of Theorem~\ref{main}, as we assumed that it was the case for $\mu$ and $\rho$. On top of that, $(\mu_n,\rho_n)$ fits the finite setup: the entries outside of $\{a_1,\dots,a_n\}$ are all null and irrelevant; also notice that if Proposition~\ref{finite} holds for all $\{0,1,\dots,N\}$, then it holds for all $\{0,1,\dots,N^2\}$, hence also for labels in $\{\frac k N,:\, 0\le k\le N^2\}$. Therefore, we can apply Proposition~\ref{finite} and get some monotone coupling $\nu_n$ between $\mu_n$ and $\rho_n$.

The space $[0,\infty]^A$, endowed with the product topology, is compact Hausdorff, thus so is its Cartesian square. The set $\{(x,y)\in [0,\infty]^A\times [0,\infty]^A\,:\,\forall a\in A,\, x_a\le y_a\}$ is closed inside this square, hence compact itself. Therefore, the space of monotone couplings, endowed with the smallest topology making continuous the integration against all continuous bounded functions, is compact. We can thus pick $\nu$ some accumulation point of the sequence $(\nu_n)$: we may write $\nu=\lim_{k\to\infty}\nu_{n_k}$. By construction, $\nu$-almost every $(x,y)$ satisfies $\forall a\in A,\,x_a\le y_a$. What remains to check is that $\nu$ has the correct marginals.

By symmetry, it suffices to take care of the first marginal. It suffices to check that for every continuous (automatically bounded) $h:[0,\infty]^A\to \mathbb{R}$, we have $\int h(x)\,\mathrm{d}\nu(x,y)=\int h(x)\,\mathrm{d}\mu(x)$. As $h$ is continuous bounded, we know that \[\int h(x)\,\mathrm{d}\nu(x,y)=\lim_{k\to\infty}\int h(x)\,\mathrm{d}\nu_{n_k}(x,y).\] As the first marginal of $\nu_{n_k}$ is $\mu_{n_k}$, we have $\int h(x)\,\mathrm{d}\nu(x,y)=\lim_{k\to\infty}\int h(x)\,\mathrm{d}\mu_{n_k}(x)$. It thus suffices to establish that $(\mu_n)$ converges to $\mu$. To see this, let $Y$ be a $\mu$-distributed random variable: as $g_n(Y)$ converges pointwise to $Y$, the distribution of $g_n(Y)$ converges to that of $Y$, which is exactly what we had left to prove. This concludes the proof of Theorem~\ref{main}.\qed

\subsection{Proofs of the corollaries}
\label{sec:coro}
This section is dedicated to deriving Corollaries~\ref{coro:indep} and \ref{coro:exchange} from Theorem~\ref{main}.

\subsubsection{Proof of Corollary~\ref{coro:indep}} Without loss of generality, we assume that $A$ and $B$ are disjoint. Let $\tilde A:=A\sqcup B$. Let $\tilde\pi:\tilde A \to B$ be defined as $\pi$ on $A$ and as the identity on $B$. For every $b\in B$, pick some probability measure $\tilde\rho_b$ on $\tilde\pi^{-1}(b)=\pi^{-1}(b)\sqcup\{b\}$ in such a way that the $\pi^{-1}(b)$-marginal of $\tilde\rho_b$ is $\rho_b$ and its $b$-marginal is $\nu_b$. This is indeed possible, for instance by setting $\tilde\rho_b:=\rho_b\otimes \nu_b$. We consider the section $\mathfrak s$ of $\tilde \pi$ given by the inclusion of $B$ as a subset of $\tilde A=A\sqcup B$. Let $\tilde\rho:=\bigotimes_b\tilde\rho_b$ and let $\tilde\mu$ be the pushforward of $\mu$ by the map $g:[0,\infty]^A\to[0,\infty]^{A\sqcup B}$ given by
\[
    g(x)_c=\begin{cases}
        x_c&\text{if $c\in A$,}\\
        0&\text{if $c\in B$.}
    \end{cases}
\]
We want to apply Theorem~\ref{main} to $(\tilde A,B,\tilde \pi,\mathfrak s,\tilde\rho,\tilde\mu)$.

It is the case that $\tilde\pi$ is a surjective map between nonempty countable sets. The probability measure $\tilde\mu$ is indeed a $\tilde\pi$-lift, as the map $g$ attributes automatically the value zero to all entries that were not originally present in $A$.
Assumption~$\mathbf{A}_{\tilde\rho}$ holds: indeed, by independence, there is no conditioning to make, and we assumed that for every $b$ and every $a\in\pi^{-1}(b)$, the probability measure $\nu_b$ is stochastically dominated by the $a$-marginal of $\rho_b$.
Besides, the last assumption of Corollary~\ref{coro:indep} guarantees that the pushdown of $\tilde\mu$ is stochastically dominated by $\bigotimes_b\nu_b$, which gives Assumption~$\mathbf{B}_{\tilde\mu}^{\tilde\rho}$.

We can thus apply Theorem~\ref{main}, yielding that $\tilde\mu$ is stochastically dominated by $\tilde\rho$. Restricting a corresponding coupling to entries in $A$ instead of the whole $A\sqcup B$ establishes the stochastic domination of $\mu$ by $\rho$, hereby completing the proof of Corollary~\ref{coro:indep}. \qed

\subsubsection{Proof of Corollary~\ref{coro:exchange}} Let $\mathfrak s$ be any section of $\pi$. It suffices to check Assumptions~$\mathbf A_\rho$ and $\mathbf B^\rho_\mu$.
Observe that for every $b$, the fixator\footnote{When a group $G$ acts on a set $E$, the \defini{fixator} of a subset $P$ of $E$ is the group of all $g$'s satisfying $\forall x\in P,\,g\cdot x=x$. Here, we consider $\prod_b G_b$ acting on $A$.} of $A\setminus\pi^{-1}(b)$ can be identified with $G_b$ and acts transitively on $\pi^{-1}(b)$. Therefore, the conditional distribution appearing in $\mathbf A_\rho$ depends only on $b$, not on $a\in\pi^{-1}(b)$. Assumption~$\mathbf A_\rho$ follows, as a probability measure is always stochastically dominated by itself. As for $\mathbf{B}_\mu^\rho$, it follows from our third assumption. \qed

\begin{rema}
    In the previous argument, the condition that really matters as far as $\mathbf A_\rho$ is concerned is the following transitive-fixator condition. Consider the group $\mathrm{Sym}_\rho$ of all $\sigma\in \mathfrak{S}_\pi$ such that the pushforward of $\rho$ by $x\mapsto \sigma\cdot x$ is equal to $\rho$. The condition then reads as follows: $\mathrm{Sym}_\rho$ satisfies that, for every $b$, the fixator of $A\setminus\pi^{-1}(b)$ acts transitively on $\pi^{-1}(b)$. This condition is indeed satisfied as soon as $\rho$ is sufficiently symmetric. Let us explain why, conversely, the transitive-fixator condition implies that $\rho$ is sufficiently symmetric. For each $b$, using the transitive-fixator condition yields a transitive subgroup $G_b$ of the permutations of $\pi^{-1}(b)$ such that letting it act naturally on $\pi^{-1}(b)$ and as the identity elsewhere preserves the probability measure $\rho$. We deduce that $\rho$ is left invariant under the action of the group $\bigoplus_b G_b$ --- which is defined to be the group of the elements $(g_b)$ of $\prod_b G_b$ such that all but finitely many $g_b$'s are the identity. Finally, as probability measures on $[0,\infty]^A$ are characterised by their finite-dimensional marginals, invariance under $\bigoplus_b G_b$ implies invariance under the larger group $\prod_b G_b$. In the end, the following three conditions are equivalent: the transitive-fixator condition, the existence of transitive $G_b$'s such that $\mathrm{Sym}_\rho$ contains $\bigoplus_b G_b$, and the same with $\prod_b G_b$ instead of $\bigoplus_b G_b$. We have chosen to define ``sufficiently symmetric'' by using $\prod_b G_b$ because this definition is simple and because it makes clear that the horizontal marginal is well defined.
\end{rema}

\section{Variations on Theorem~\ref{main}}
\label{sec:gen}

In this section, we investigate in which directions Theorem~\ref{main} may or may not be improved. This section is not used in subsequent sections.

\subsection{Lifting several variables per column}
\label{sec:multi}
In this subsection, we do not try to give a maximally general statement fitting the purpose announced in the title --- this would probably be hard to read. We will see that a new phenomenon occurs, and we have chosen to focus on the simplest statement exhibiting this phenomenon.

The purpose will no longer be to prove that two fully specified probability measures satisfy stochastic domination. Instead, the lifting process will leave some room for indeterminacy, and the theorem will state that there is a way to proceed such that the lifting constraints are satisfied and stochastic domination holds.

A convenient way to state our result uses the notion of extension of probability spaces. An \defini{extension} of a probability space $(\Omega,\mathscr{F},\mathbb{P})$ is the data of a probability space $(\Omega^\star,\mathscr{F}^\star,\mathbb{P}^\star)$ together with a measure-preserving map $\varphi:\Omega^\star\to \Omega$, i.e.~a measurable map such that the pushforward of $\mathbb{P}^\star$ by $\varphi$ is $\mathbb P$. Any random variable $X$ on $(\Omega,\mathscr{F},\mathbb{P})$ can be interpreted in the extended space by considering $X^\star:=X\circ \varphi$. By abuse of notation, if the context is unambiguous, we may continue to use the notation $X$ instead of $X^\star$ for extended random variables. 

\begin{theo}
    \label{thm:multilift}
    Let $\pi:A\to B$ be a surjective map between nonempty countable sets. Let $p\in [0,1]$. For every $b\in B$, let $X_b$ and $X^\dagger_b$ be Bernoulli random variables of parameter $p$. Assume that the collection of random variables formed by all the $X_b$'s and all the $X^\dagger_b$'s is independent.
    Let $S$ and $S^\dagger$ be random sections of $\pi$ such that, for every $b\in B$, we almost surely have $S(b)\neq S^\dagger(b)$.

    Then, up to extending the probability space, there are $\{0,1\}$-valued random variables $Y_a$ and $Z_a$ such that the following conditions hold:
    \begin{enumerate}
        \item \begin{enumerate}
             \item\label{item:useless1} for every $a$, we have $a\notin \{S\circ\pi(a),S^\dagger\circ\pi(a)\}\implies Y_a=0$,
            \item\label{item:useless2} for every $b$ satisfying $X_b=0$ and $X^\dagger_b=0$, we have $Y_{S(b)}=0$ and $Y_{S^\dagger(b)}=0$,
            \item \label{item:strong}for every $b$ satisfying $X_b=1$ and $X^\dagger_b=0$, we have $Y_{S(b)}=1$ and $Y_{S^\dagger(b)}=0$,
            \item for every $b$ satisfying $X_b=1$ and $X^\dagger_b=1$, we have $Y_{S(b)}=1$ and $Y_{S^\dagger(b)}=1$,
            \item \label{item:indeterminacy}for every $b$ satisfying $X_b=0$ and $X^\dagger_b=1$, exactly one $a$ in $\left\{S(b),S^\dagger(b)\right\}$ satisfies $Y_a=1$,
        \end{enumerate}
        \item the distribution of $(Z_a)$ is $\mathrm{Bernoulli}(p)^{\otimes A}$,
        \item for all $a$, we have $Y_a\le Z_a$.
    \end{enumerate}
\end{theo}

\begin{rema}
    The indeterminacy is captured by Item~\ref{item:indeterminacy}, where we have the possibility to lift the ``weak bit'' $X^\dagger_{b}$ in $\{S(b),S^\dagger(b)\}$ but not to decide where. On the contrary, Item~\ref{item:strong} states that the ``strong bit'' $X_{b}$ can be lifted exactly where one wants. Items~\ref{item:useless1} and \ref{item:useless2} are useless, in the sense that if some $(Y_a)$ satisfies the conclusion apart from these items, setting ${Y}'_a=Y_a\, \mathds{1}_{a\in \{S\circ\pi(a),S^\dagger\circ\pi(a)\}}\,\mathds{1}_{ (X_{\pi(a)},X^\dagger_{\pi(a)})\neq (0,0)}$ will satisfy the full conclusion for free. Likewise, in Item~\ref{item:indeterminacy}, it is not ``exactly one'' that truly matters but ``at least one''. We decided to include these conditions in order to try and construct $Y_a$ as explicitly as before, and see exactly where it fails. If we remove all useless items from the conclusion, then we can further take $Y_a=Z_a$.
\end{rema}

Our motivation for investigating this was to get a new proof of Theorem~\ref{thm:strict} using techniques in the spirit of Theorem~\ref{lakon}. Theorem~\ref{thm:strict}, first obtained in \cite{ms}, is a version of Theorem~\ref{thm:bs} where, under additional assumptions, one gets the strict inequality. The general idea there is to use the fact that $\pi$ is (at least 2)-to-1 to try and ``get a second chance''. We have not managed to use Theorem~\ref{thm:multilift} or variations of it to obtain such a proof. Still, Section~\ref{sec:strict} provides a new proof of this result, based on explorations and couplings.

The strategy to prove Theorem~\ref{thm:multilift} follows an easy scheme: from some $\pi:A\to B$ and probability measures $\mu$ and $\rho$, construct new $\tilde \pi:\tilde A\to B$ and probability measures $\tilde \mu$ and $\tilde \rho$; then apply Theorem~\ref{main} to these new objects. It will be easy for the reader to craft their own variations.

\begin{proof}[Proof of Theorem~\ref{thm:multilift}]
    Let $\tilde A:=\{(a_1,a_2)\in A^2\,:\,a_1\neq a_2\text{ and }\pi(a_1)=\pi(a_2)\}$. Let $\tilde \pi:\tilde A\to B$ be defined by mapping any $(a_1,a_2)$ to the element $\pi(a_1)$, which is also $\pi(a_2)$. The set $\tilde A$ is indeed countable. As $\tilde S:b\longmapsto (S(b),S^\dagger(b))$ is a random section of $\tilde \pi$, such sections exist, entailing surjectivity of $\tilde \pi$ --- and nonemptiness of $\tilde A$, as $B$ is nonempty.

For every $b\in B$, let $\tilde X_b:=2X_b+X^\dagger_b$. For every $b\in B$, let $\rho_b:=\mathrm{Bernoulli}(p)^{\otimes\pi^{-1}(b)}$, and let $\tilde \rho_b$ be the pushforward of the measure $\rho_b$ by the map \[(x_a)_{a\in\pi^{-1}(b)}\longmapsto (2x_{a_1}+x_{a_2})_{(a_1,a_2)\in \tilde \pi^{-1}(b)}.\] As the $\tilde X_b$'s are independent, we can apply Corollary~\ref{coro:indep-va} to $\tilde X_b$, $\tilde S(b)$, and $\tilde \rho_b$. Therefore, letting $\tilde Y_\mathbf{a}:=\tilde X_{\tilde \pi(\mathbf{a})}\,\mathds{1}_{\tilde S\circ\tilde \pi(\mathbf{a})=\mathbf{a}}$, we get that the distribution of $(\tilde Y_\mathbf{a})$ is stochastically dominated by $\bigotimes_{b\in B}\tilde \rho_b$.

Up to extending our probability space, we can thus define random variables $Z_a$, for $a\in A$, in such a way that:
\begin{itemize}
    \item the random variables $Z_a$ are i.i.d. with Bernoulli distribution of parameter $p$,
    \item for every $\mathbf{a}=(a_1,a_2)\in \tilde A$, we have $\tilde S\circ {\tilde \pi(\mathbf{a})}=\mathbf{a}\implies \tilde Y_a\le 2Z_{a_1}+Z_{a_2}$.
\end{itemize}
We defer justification of this point to the end of the proof.
For now, observe that the second condition can be rewritten as
\[\tilde S\circ {\tilde \pi(\mathbf{a})}=\mathbf{a}\implies (2X_{\tilde \pi(\mathbf{a})}+X^\dagger_{\tilde \pi(\mathbf{a})})\,\mathds{1}_{\tilde S\circ {\tilde \pi(\mathbf{a})}=\mathbf{a}}\le 2Z_{a_1}+Z_{a_2}\]
holding for every $\mathbf{a}$, which in turn can be written as
\[
2Z_{S(b)}+Z_{S^\dagger(b)}\ge 2X_{b}+X^\dagger_{b}
\]
holding for every $b$.

Observe that, to establish the result, there is no need to speak of the $Y_a$'s. It suffices to prove Items~\ref{item:useless1}--\ref{item:indeterminacy} with lower bounds on $Z_a$ instead of equalities for $Y_a$. More precisely, it suffices to prove the following three conditions:
\begin{itemize}
    \item for every $b$ satisfying $X_b=1$ and $X^\dagger_b=0$, we have $Z_{S(b)}=1$,
    \item for every $b$ satisfying $X_b=1$ and $X^\dagger_b=1$, we have $Z_{S(b)}=1$ and $Z_{S^\dagger(b)}=1$,
    \item for every $b$ satisfying $X_b=0$ and $X^\dagger_b=1$, at least one $a$ in $\left\{S(b),S^\dagger(b)\right\}$ satisfies $Z_a=1$,
\end{itemize}
Let us establish that these conditions holds. When $(X_{b},X^\dagger_{b})=(1,1)$, we have $2Z_{S(b)}+Z_{S^\dagger(b)}\ge 3$, which gives $Z_{S(b)}=Z_{S^\dagger(b)}=1$, as desired. When $(X_{b},X^\dagger_{b})=(1,0)$, we have $2Z_{S(b)}+Z_{S^\dagger(b)}\ge 2$, which indeed guarantees that $Z_{S(b)}=1$. At last, when $(X_b,X^\dagger_b)=(0,1)$, we have $2Z_{S(b)}+Z_{S^\dagger(b)}\ge 1$, which indeed guarantees that there is at least one 1 among $Z_{S(b)}$ and $Z_{S^\dagger(b)}$. 

It remains to justify how to implement stochastic domination via extensions, which is in the spirit of Lemma~\ref{extension}. Let $\mathbf{Z}=(Z_a)_{a\in A}$ be a family of independent Bernoulli random variables of parameter $p$, defined on some probability space. On this same probability space, we can also define $\mathbf{W}=(Z_{a_1},Z_{a_2})$, which has distribution $\bigotimes_{b\in B}\tilde \rho_b$. As the random variables $\mathbf{Z}$ and $\mathbf{W}$ take values in Polish spaces, we can define a probability kernel $\kappa$ such that the conditional distribution of $\mathbf{Z}$ given $\mathbf{W}$ is $\kappa_\mathbf{W}$. 
On our original probability space, we have the random variable $\mathbf{Y}=(\tilde Y_\mathbf{a})$. By Corollary~\ref{coro:indep-va}, we know that there are yet another probability space and random variables $\mathbf{Y}'$ and $\mathbf{W}'$ defined on it such that $\mathbf{Y}'$ has the same distribution as $\mathbf{Y}$, $\mathbf{W}'$ the same distribution as $\mathbf{W}$, and the inequality $\mathbf{Y}'\le \mathbf{W}'$ holds almost surely. Let $\kappa'$ be a probability kernel such that the conditional distribution of $\mathbf{W}'$ given $\mathbf{Y}'$ is $\kappa'_{\mathbf{Y}'}$.
If $\mathbf{Z}$ takes values in $E$, then a suitable extension is given by $\Omega^\star:=\Omega\times E$, the probability measure $\mathbb{P}^\star:=\iint \delta_{\omega}\otimes \kappa_{\mathbf{w}}\,\mathrm{d}\kappa'_{\mathbf{Y}(\omega)}(\mathbf{w})\,\mathrm{d}\mathbb{P}(\omega)$, and the map $\varphi:(\omega,\mathbf{z})\longmapsto \omega$. The random variable $\mathbf{Z}^\star:(\omega,\mathbf{z})\longmapsto \mathbf{z}$ behaves as desired.
\end{proof}

\begin{enonce}[remark]{Counterexample}
    Please note that Theorem~\ref{thm:multilift} does not hold if the condition of the subtle Item~\ref{item:indeterminacy} is replaced by ``$Y_{S(b)}=0$ and $Y_{S^\dagger(b)}= 1$''. Indeed, a counterexample is possible with $B=\{o\}$ and $A=\{1,2\}$. Simply take $(S(o),S^\dagger(o))=(1,2)$ if $X_{o}=1$ and $(S(o),S^\dagger(o))=(2,1)$ otherwise. Then, the parameter of the Bernoulli random variable lifted at 1 via this too strong procedure would have parameter $1-(1-p)^2$, which is strictly larger than $p$ as soon as $p\in(0,1)$.
\end{enonce}

\subsection{Beyond Conditions~$\mathbf{A}_\rho~\mathbf{B}_\mu^\rho$}
\label{sec:somesections}

In this section, we investigate a generalisation of Theorem~\ref{main} that may seem reasonable but turns out to be wrong. We show that if we drop Assumptions~$\mathbf{A}_\rho~\mathbf{B}_\mu^\rho$ and assume something weaker instead, then Theorem~\ref{main} becomes false. The weak assumption we consider is:
\begin{itemize}
    \item[$\mathbf{C}_\mu^\rho$] for every section $s$, the pushdown of $\mu$ is stochastically dominated by the $s$-marginal of $\rho$, i.e.~by the pushforward of $\rho$ by $x\longmapsto (x_{s(b)})_{b\in B}$.
\end{itemize}
Let us build an example where this condition is satisfied but where $\mu$ is not stochastically dominated by $\rho$.\\

Our sets $A$ and $B$ are defined by: $A:=\{1,2,3\}\times\{1,2,3\}$ and $B:=\{1,2,3\}$, and the function $\pi$ is the projection on the second coordinate. We will represent elements of $\{0,1\}^A$ and $\{0,1\}^B$ respectively by $3\times 3$ matrices with coefficients in $\{0,1\}$ and horizontal vectors of size $3$ with coefficients in $\{0,1\}$. 
Let $\mu$ be the uniform measure on the set given by these four matrices:
\[
\left[\begin{matrix}0&0&0\\0&0&0 \\0&0&0 \end{matrix}\right],\ 
\left[\begin{matrix}1&1&0\\0&0&0 \\0&0&0 \end{matrix}\right],\ 
\left[\begin{matrix}0&0&0\\1&0&1 \\0&0&0 \end{matrix}\right] \text{ and }
\left[\begin{matrix}0&0&0\\0&0&0 \\0&1&1 \end{matrix}\right].
\]
In particular, the pushdown of $\mu$ is the uniform measure the following vectors:
\[
\left[\begin{matrix}0&0&0\end{matrix}\right],\ 
\left[\begin{matrix}1&1&0\end{matrix}\right],\ 
\left[\begin{matrix}1&0&1 \end{matrix}\right] \text{ and }
\left[\begin{matrix}0&1&1 \end{matrix}\right].
\]
Similarly let $\rho$ be the uniform probability measure on these four matrices:
\[
\left[\begin{matrix}1&1&1\\1&1&1\\1&1&1\end{matrix}\right],\  \left[\begin{matrix}1&1&0\\1&0&1\\0&1&1\end{matrix}\right],\  \left[\begin{matrix}1&0&1\\0&1&1\\1&1&0\end{matrix}\right]\text{ and } \left[\begin{matrix}0&1&1\\1&1&0\\1&0&1\end{matrix}\right]. 
\]
Among these matrices, only the matrices 
\[
\left[\begin{matrix}1&1&1\\1&1&1\\1&1&1\end{matrix}\right],\text{ and }  \left[\begin{matrix}1&1&0\\1&0&1\\0&1&1\end{matrix}\right]
\]
are larger than either of the three matrices 
\[
\left[\begin{matrix}1&1&0\\0&0&0 \\0&0&0 \end{matrix}\right],\ 
\left[\begin{matrix}0&0&0\\1&0&1 \\0&0&0 \end{matrix}\right] \text{ and }
\left[\begin{matrix}0&0&0\\0&0&0 \\0&1&1 \end{matrix}\right].
\]
As a result, $\mu$ cannot be stochastically dominated by $\rho$. We still need to show that $\mathbf{C}_\mu^\rho$ holds. By symmetry, we may assume that $s(1)=1$. We still have nine possible sections to check, which is done in Figure~\ref{fig:table}. Every row corresponds to a given section and the coupling between $\mu$ and $\rho$ is given by which matrix is in which column.

\begin{rema}
Likewise, we can see that if we try to keep Assumptions~$\mathbf{A}_\rho$ and $\mathbf{B}_\mu^\rho$ and go beyond $[0,\infty]$-labels, we get stopped right away. Take labels to live in $\{0,1\}^2$, endowed with its product ordering. Let $B$ be a singleton and $A:=\{1,2\}$. Let $\mu$ be the uniform probability measure on the set given by these two configurations:
%Il est normal que ce soit des vecteurs et pas des matrices
\[
\left[\begin{matrix}10\\00\end{matrix}\right] \text{ and }\left[\begin{matrix}00\\01\end{matrix}\right]
\]
and let $\rho$ be the uniform probability measure on the two configurations
\[
\left[\begin{matrix}10\\01\end{matrix}\right] \text{ and }\left[\begin{matrix}01\\10\end{matrix}\right].
\]
Then all assumptions hold but the conclusion does not. This minimal counterexample can be implemented in any label space that is not totally ordered and contains a global minimum --- which plays the role of the element $0$ of $[0,\infty]$.
\end{rema}

The following question remains open.

\begin{enonce}{Question}
Modify Theorem~\ref{main} as follows. Drop Assumptions~$\mathbf{A}_\rho~\mathbf{B}_\mu^\rho$ and assume instead that for every section $s$, the pushdown of $\mu$ is \emph{equal} to the $s$-marginal of $\rho$. Does this modified statement hold?
\end{enonce}

\newpage

\begin{figure}
~
\vspace{1.1cm}
\begin{center}
\begin{tabular}{||c | c c c c||} 
 \hline
 Section & $\left[\begin{matrix}1&1&0\end{matrix}\right]$ & $\left[\begin{matrix}1&0&1\end{matrix}\right]$ & $\left[\begin{matrix}0&1&1\end{matrix}\right]$  & $\left[\begin{matrix}0&0&0\end{matrix}\right]$ \\ [0.5ex] 
 \hline\hline 
$\begin{matrix} \  \\ \ \\ \ \\ \  \end{matrix}$ $\left[\begin{matrix} \textcolor{red}{*}& \textcolor{red}{*} & \textcolor{red}{*} \\ 0& 0 & 0 \\ 0 & 0 &0 \end{matrix}\right]$ $\begin{matrix} \  \\ \ \\ \ \\ \  \end{matrix}$ & $\left[\begin{matrix} \textcolor{red}{\textit{1}}& \textcolor{red}{\textit{1}} & \textcolor{red}{\textit{0}} \\ 1& 0 & 1 \\ 0 & 1 &1 \end{matrix}\right]$ & 
$\left[\begin{matrix} \textcolor{red}{\textit{1}}& \textcolor{red}{\textit{0}} & \textcolor{red}{\textit{1}} \\ 0& 1 & 1 \\ 1 & 1 &0 \end{matrix}\right]$ &
$\left[\begin{matrix} \textcolor{red}{\textit{1}}& \textcolor{red}{\textit{1}} & \textcolor{red}{\textit{1}} \\ 1& 1 & 1 \\ 1 & 1 &1 \end{matrix}\right]$ &
$\left[\begin{matrix} \textcolor{red}{\textit{0}}& \textcolor{red}{\textit{1}} & \textcolor{red}{\textit{1}} \\ 1& 1 & 0 \\ 1 & 0 &1 \end{matrix}\right]$ \\
 \hline 
$\begin{matrix} \  \\ \ \\ \ \\ \  \end{matrix}$ $\left[\begin{matrix}\textcolor{red}{*}& \textcolor{red}{*} & 0 \\ 0& 0 & \textcolor{red}{*} \\ 0 & 0 &0 \end{matrix}\right]$ $\begin{matrix} \  \\ \ \\ \ \\ \  \end{matrix}$ &  $\left[\begin{matrix}\textcolor{red}{\textit{1}}& \textcolor{red}{\textit{1}} & 0 \\ 1& 0 & \textcolor{red}{\textit{1}} \\ 0 & 1 &1 \end{matrix}\right]$ &
$\left[\begin{matrix}\textcolor{red}{\textit{1}}& \textcolor{red}{\textit{0}} & 1 \\ 0& 1 & \textcolor{red}{\textit{1}} \\ 1 & 1 &0 \end{matrix}\right]$ &
$\left[\begin{matrix}\textcolor{red}{\textit{1}}& \textcolor{red}{\textit{1}} & 1 \\ 1& 1 & \textcolor{red}{\textit{1}} \\ 1 & 1 &1 \end{matrix}\right]$ &
$\left[\begin{matrix}\textcolor{red}{\textit{0}}& \textcolor{red}{\textit{1}} & 1 \\ 1& 1 & \textcolor{red}{\textit{0}} \\ 1 & 0 &1 \end{matrix}\right]$ \\ 
 \hline
$\begin{matrix} \  \\ \ \\ \ \\ \  \end{matrix}$ $\left[\begin{matrix}\textcolor{red}{*}& \textcolor{red}{*} & 0 \\ 0& 0 & 0 \\ 0 & 0 &\textcolor{red}{*} \end{matrix}\right]$ $\begin{matrix} \  \\ \ \\ \ \\ \  \end{matrix}$ & 
$\left[\begin{matrix}\textcolor{red}{\textit{1}}& \textcolor{red}{\textit{1}} & 0 \\ 1& 0 & 1 \\ 0 & 1 &\textcolor{red}{\textit{1}} \end{matrix}\right]$ &
$\left[\begin{matrix}\textcolor{red}{\textit{1}}& \textcolor{red}{\textit{1}} & 1 \\ 1& 1 & 1 \\ 1 & 1 &\textcolor{red}{\textit{1}} \end{matrix}\right]$ &
$\left[\begin{matrix}\textcolor{red}{\textit{0}}& \textcolor{red}{\textit{1}} & 1 \\ 1& 1 & 0 \\ 1 & 0 &\textcolor{red}{\textit{1}} \end{matrix}\right]$ &
$\left[\begin{matrix}\textcolor{red}{\textit{1}}& \textcolor{red}{\textit{0}} & 1 \\ 0& 1 & 1 \\ 1 & 1 &\textcolor{red}{\textit{0}} \end{matrix}\right]$ \\
 \hline
$\begin{matrix} \  \\ \ \\ \ \\ \  \end{matrix}$ $\left[\begin{matrix}\textcolor{red}{*}& 0 & \textcolor{red}{*} \\ 0& \textcolor{red}{*} & 0 \\ 0 & 0 &0 \end{matrix}\right]$ $\begin{matrix} \  \\ \ \\ \ \\ \  \end{matrix}$ & $\left[\begin{matrix}\textcolor{red}{\textit{1}}& 0 & \textcolor{red}{\textit{1}} \\ 0& \textcolor{red}{\textit{1}} & 1 \\ 1 & 1 & 0 \end{matrix}\right]$ & 
$\left[\begin{matrix}\textcolor{red}{\textit{1}}& 1 & \textcolor{red}{\textit{1}} \\ 1& \textcolor{red}{\textit{1}} & 1 \\ 1 & 1 & 1 \end{matrix}\right]$ &
$\left[\begin{matrix}\textcolor{red}{\textit{0}}& 1 & \textcolor{red}{\textit{1}} \\ 1& \textcolor{red}{\textit{1}} & 0 \\ 1 & 0 & 1 \end{matrix}\right]$ &
$\left[\begin{matrix}\textcolor{red}{\textit{1}}& 1 & \textcolor{red}{\textit{0}} \\ 1& \textcolor{red}{\textit{0}} & 1 \\ 0 & 1 & 1 \end{matrix}\right]$ \\ 
 \hline
$\begin{matrix} \  \\ \ \\ \ \\ \  \end{matrix}$ $\left[\begin{matrix}\textcolor{red}{*}& 0 & 0 \\ 0& \textcolor{red}{*} & \textcolor{red}{*} \\ 0 & 0 &0 \end{matrix}\right]$ $\begin{matrix} \  \\ \ \\ \ \\ \  \end{matrix}$ & $\left[\begin{matrix}\textcolor{red}{\textit{1}}& 0 & 1\\ 0& \textcolor{red}{\textit{1}} & \textcolor{red}{\textit{1}} \\ 1 & 1 & 0 \end{matrix}\right]$ & 
$\left[\begin{matrix}\textcolor{red}{\textit{1}}& 1 & 0 \\ 1& \textcolor{red}{\textit{0}} & \textcolor{red}{\textit{1}} \\ 0 & 1 &1 \end{matrix}\right]$ &
$\left[\begin{matrix}\textcolor{red}{\textit{1}}& 1 & 1 \\ 1& \textcolor{red}{\textit{1}} & \textcolor{red}{\textit{1}} \\ 1 & 1 &1 \end{matrix}\right]$ &
$\left[\begin{matrix}\textcolor{red}{\textit{0}}& 1 & 1 \\ 1& \textcolor{red}{\textit{1}} & \textcolor{red}{\textit{0}} \\ 1 & 0 &1 \end{matrix}\right]$ \\
 \hline
$\begin{matrix} \  \\ \ \\ \ \\ \  \end{matrix}$ $\left[\begin{matrix}\textcolor{red}{*}& 0 & 0 \\ 0& \textcolor{red}{*} & 0 \\ 0 & 0 &\textcolor{red}{*} \end{matrix}\right]$ $\begin{matrix} \  \\ \ \\ \ \\ \  \end{matrix}$ & $\left[\begin{matrix}\textcolor{red}{\textit{1}}& 0 & 1 \\ 0& \textcolor{red}{\textit{1}} & 1 \\ 1 & 1 & \textcolor{red}{\textit{0}} \end{matrix}\right]$ & 
$\left[\begin{matrix}\textcolor{red}{\textit{1}}& 1 & 0 \\ 1& \textcolor{red}{\textit{0}} & 1 \\ 0 & 1 &\textcolor{red}{\textit{1}} \end{matrix}\right]$ &
$\left[\begin{matrix}\textcolor{red}{\textit{1}}& 1 & 1 \\ 1& \textcolor{red}{\textit{1}} & 1 \\ 1 & 1 &\textcolor{red}{\textit{1}} \end{matrix}\right]$ &
$\left[\begin{matrix}\textcolor{red}{\textit{0}}& 1 & 1 \\ 1& \textcolor{red}{\textit{1}} & 0 \\ 1 & 0 &\textcolor{red}{\textit{1}} \end{matrix}\right]$ \\
 \hline
$\begin{matrix} \  \\ \ \\ \ \\ \  \end{matrix}$ $\left[\begin{matrix}\textcolor{red}{*}& 0 & \textcolor{red}{*} \\ 0& 0 & 0 \\ 0 & \textcolor{red}{*} &0 \end{matrix}\right]$ $\begin{matrix} \  \\ \ \\ \ \\ \  \end{matrix}$ & $\left[\begin{matrix}\textcolor{red}{\textit{1}}& 1 & \textcolor{red}{\textit{0}} \\ 1& 0 & 1 \\ 0 & \textcolor{red}{\textit{1}} &1 \end{matrix}\right]$ & 
$\left[\begin{matrix}\textcolor{red}{\textit{1}}& 0 & \textcolor{red}{\textit{1}} \\ 0& 1 & 1 \\ 1 & \textcolor{red}{\textit{1}} &0 \end{matrix}\right]$ &
$\left[\begin{matrix}\textcolor{red}{\textit{1}}& 1 & \textcolor{red}{\textit{1}} \\ 1& 1 & 1 \\ 1 & \textcolor{red}{\textit{1}} &1 \end{matrix}\right]$ &
$\left[\begin{matrix}\textcolor{red}{\textit{0}}& 1 & \textcolor{red}{\textit{1}} \\ 1& 1 & 0 \\ 1 & \textcolor{red}{\textit{0}} &1 \end{matrix}\right]$ \\ 
 \hline
$\begin{matrix} \  \\ \ \\ \ \\ \  \end{matrix}$ $\left[\begin{matrix}\textcolor{red}{*}& 0 & 0 \\ 0& 0 & \textcolor{red}{*} \\ 0 & \textcolor{red}{*} &0 \end{matrix}\right]$ $\begin{matrix} \  \\ \ \\ \ \\ \  \end{matrix}$ & $\left[\begin{matrix}\textcolor{red}{\textit{1}}& 1 & 0 \\ 1& 0 & \textcolor{red}{\textit{1}} \\ 0 & \textcolor{red}{\textit{1}} &1 \end{matrix}\right]$ & 
$\left[\begin{matrix}\textcolor{red}{\textit{1}}& 0 & 1 \\ 0& 1 & \textcolor{red}{\textit{1}} \\ 1 & \textcolor{red}{\textit{1}} &0 \end{matrix}\right]$ &
$\left[\begin{matrix}\textcolor{red}{\textit{1}}& 1 & 1 \\ 1& 1 & \textcolor{red}{\textit{1}} \\ 1 & \textcolor{red}{\textit{1}} &1 \end{matrix}\right]$ &
$\left[\begin{matrix}\textcolor{red}{\textit{0}}& 1 & 1 \\ 1& 1 & \textcolor{red}{\textit{0}} \\ 1 & \textcolor{red}{\textit{0}} &1 \end{matrix}\right]$ \\
 \hline
$\begin{matrix} \  \\ \ \\ \ \\ \  \end{matrix}$ $\left[\begin{matrix}\textcolor{red}{*}& 0 & 0 \\ 0& 0 & 0 \\ 0 & \textcolor{red}{*} &\textcolor{red}{*} \end{matrix}\right]$ $\begin{matrix} \  \\ \ \\ \ \\ \  \end{matrix}$ & $\left[\begin{matrix}\textcolor{red}{\textit{1}}& 0 & 1 \\ 0& 1 & 1 \\ 1 & \textcolor{red}{\textit{1}} &\textcolor{red}{\textit{0}} \end{matrix}\right]$ & 
$\left[\begin{matrix}\textcolor{red}{\textit{1}}& 1 & 1 \\ 1& 1 & 1 \\ 1 & \textcolor{red}{\textit{1}} &\textcolor{red}{\textit{1}} \end{matrix}\right]$ &
$\left[\begin{matrix}\textcolor{red}{\textit{1}}& 1 & 0 \\ 1& 0 & 1 \\ 0 & \textcolor{red}{\textit{1}} &\textcolor{red}{\textit{1}} \end{matrix}\right]$ &
$\left[\begin{matrix}\textcolor{red}{\textit{0}}& 1 & 1 \\ 1& 1 & 0 \\ 1 & \textcolor{red}{\textit{0}} &\textcolor{red}{\textit{1}} \end{matrix}\right]$ \\
 \hline
 \hline
\end{tabular}
\end{center}
\caption{This table justifies that our construction satisfies Assumption~$\mathbf{C}_\mu^\rho$.}
\label{fig:table}
\end{figure}

\newpage

\section{Application of Theorem~\ref{lakon} to percolation}
\label{sec:perco}

The purpose of this section is to use Theorem~\ref{lakon} to reprove and generalise Theorem~\ref{thm:bs}.
Site percolation, bond percolation, and fibrations have been defined in the introduction. Recall that, given a graph $\mathscr G$, we write $V_\mathscr{G}$ for its set of vertices and $E_\mathscr{G}$ for its set of edges.

\subsection{Site percolation encompasses bond percolation}
\label{sec:bondtosite}

%\begin{rema}
%\label{rem:bondtosite}
The bond-version of Theorem~\ref{thm:bs} follows from its site-version. Indeed, recall that given a countable locally finite graph $\mathscr G$, one can build another countable locally finite graph, $\mathscr{G}^\star$, by setting $V_{\mathscr{G}^\star}=E_{\mathscr G}$ and declaring two elements of $V_{\mathscr{G}^\star}$ to be adjacent if and only if the corresponding edges in $\mathscr G$ have exactly one vertex in common. Then, we have $p_c^\mathrm{site}(\mathscr{G}^\star)=p_c^\mathrm{bond}(\mathscr{ G})$.

It remains to take care of $\pi$. To do so, introduce $\La^\star_\pi$ defined as follows. Its vertex-set is given by the set of all edges of $\La$ that are mapped by $\pi$ to edges of $\Sm$. As for the edge-structure on $\La^\star_\pi$, it is simply that induced by $\La^\star$: in other words, two vertices of $\La^\star$ are adjacent if and only if, seen as edges of $\La$, they have exactly one vertex in common. Every fibration $\pi$ from $\La$ to $\Sm$ induces a fibration from $\La^\star_\pi$ to $\Sm^\star$. As $\La^\star_\pi$ is a subgraph of $\La^\star$, we have $p_c^\mathrm{site}(\La^\star)\le p_c^\mathrm{site}(\La^\star_\pi)$. Combining these observations with the equalities $p_c^\mathrm{site}(\La^\star)=p_c^\mathrm{bond}(\La)$ and $p_c^\mathrm{site}(\Sm^\star)=p_c^\mathrm{bond}(\Sm)$ reduces the case of bond percolation to that of site percolation.
%\end{rema}

\subsection{Measurability issues}
\label{sec:measurability}

To complete the proof of Theorem~\ref{thm:bs} via Theorem~\ref{lakon} sketched in the introduction, it remains to take care of the measurability of $\kappa$ and $\tilde{\kappa}$. There are two rather easy ways to take care of this: we may either reduce the problem to a finite setup where there is no measurability to check or indeed check the measurability we need.

\subsubsection{Reduction to a finite setup}\label{sec:meas-fin} Let $p\in[0,1]$ and let us consider $p$-site percolation on both $\La$ and $\Sm$. It suffices to prove that for every $x\in V_\La$, the probability that $x$ is connected to infinity in $\La$ is larger than or equal to the probability that $\pi(x)$ is connected to infinity in $\Sm$.
Therefore, it suffices to prove that, for every $n$, the probability that there is an open self-avoiding path\footnote{We say that a path is \defini{self-avoiding} if no vertex is visited more than once. We take the \defini{length} of a self-avoiding path to be the number of edges it uses, which is the number of vertices it visits minus one.} starting at $x$ of length $n$ is larger than or equal to the probability that there is an open self-avoiding path starting at $\pi(x)$ of length $n$. 
We are then able to conclude, as this inequality results from Proposition~\ref{finite}, with $A$ the $n$-ball centred at $x$ and $B$ the $n$-ball centred at $\pi(x)$.

We do not claim that $\pi$ induces a fibration from $A$ to $B$, simply that every self-avoiding path of length $n$ started at $\pi(x)$ --- which necessarily lies in $B$ --- can be lifted to a self-avoiding path of length $n$ in $x$ --- which necessarily lies in $A$.

\begin{rema}
	If we only care about Theorem~\ref{thm:bs}, we do not need to go through Section~\ref{sec:finite-to-gen}, as Proposition~\ref{finite} suffices.
\end{rema}
\subsubsection{Checking measurability}
\label{sec:meas-check}
We can enumerate all vertices of $\Sm$ and do the same for $\La$. This provides a well-ordering of these vertex-sets. Then, by lexicographic ordering, we get total orders on the set of $\mathbb{N}$-indexed self-avoiding paths in $\Sm$ and $\La$, respectively. These orders are such that any nonempty closed\footnote{for the topology of pointwise convergence, where vertex-sets are seen as discrete} set admits a smallest element. A percolation configuration being fixed, the set of \emph{open} $\mathbb{N}$-indexed self-avoiding paths is topologically closed. Likewise, some $\mathbb{N}$-indexed self-avoiding paths in $\Sm$ being fixed, the set of all its lifts is closed. Therefore, picking $\kappa$ to be the smallest open such path (when some infinite open path exists) and $\tilde{\kappa}$ to be the smallest lift of $\kappa$ provides a well-defined construction. The construction of $\kappa$ is indeed measurable because, by local finiteness, an open finite self-avoiding path admits an infinite open extension if and only if it admits finite open extensions of all sizes. Likewise, the construction of $\tilde\kappa$ is measurable: define $\tilde\kappa_0$ to be the smallest vertex in $\pi^{-1}(\kappa_0)$, then $\kappa_1$ to be the smallest neighbour of $\tilde\kappa_0$ in $\pi^{-1}(\kappa_1)$, etc.

\subsection{Generalisation of Theorem~\ref{thm:bs}}
\label{sec:genbs}

Actually, we can prove a more general result.

\begin{theo}
	\label{thm:strongbs}
	Let $\La$ and $\Sm$ be countable locally finite graphs. Let $\pi$ be a surjective map from $V_\La$ to $V_\Sm$. Assume that there is a measurable way to assign to every infinite self-avoiding path $\gamma:\mathbb{N}\to V_\Sm$ some infinite self-avoiding path $\tilde{\gamma}:\{n_\gamma,n_\gamma+1,\dots\}\to V_\La$ such that for every $m\ge n_\gamma$, we have $\pi(\tilde{\gamma}_m)=\gamma_m$.
	
	For every vertex $v$ in $V_\Sm$, let $p_v\in [0,1]$. On $\Sm$, consider the random configuration where each vertex is kept independently with probability $p_v$.
	
	Let $\mathfrak{X}$ be some random subset of $V_\La$. We assume that the family of random variables $(\mathfrak{X}\cap \pi^{-1}(v))_{v\in V_\Sm}$ is independent. Besides, for every $x\in V_\La$, assume that $\mathbb{P}(x\in \mathfrak{X})\ge p_v$.
	
	Then, the probability that there is an infinite path of retained vertices in $\La$ is larger than or equal to the the probability that there is an infinite path of retained vertices in $\Sm$.
\end{theo}

\begin{rema}
	\label{rem:gain}
	This theorem has two kinds of assumptions: the first paragraph is of geometric nature while the second and third paragraphs are probabilistic. The techniques of \cite{bs} can be used to handle this level of probabilistic generality: we can get a classical proof by exploration of the above theorem if the geometric assumption is replaced by the assumption that there is a fibration from $\La$ to $\Sm$. However, the techniques of \cite{bs} really need the stronger geometric condition of having a fibration: we need to be able to continue lifting our path wherever the past exploration led us, which imposes us to ask for the possibility to lift edges $\{u,v\}$ to \emph{all} $x\in\pi^{-1}(u)$. Theorem~\ref{thm:strongbs} relaxes this condition, only asking for every infinite self-avoiding path to admit at least one lift, which may start where it wants to (and can be measurably picked). The condition is actually relaxed further, by allowing to forget the beginning of the path.
\end{rema}

\begin{proof}
	Apply our new proof of Theorem~\ref{thm:bs} but use Corollary~\ref{coro:indep} instead of Theorem~\ref{lakon}. Regarding the measurability of $\kappa$, the argument of Section~\ref{sec:meas-fin} does not apply anymore but that of Section~\ref{sec:meas-check} still applies. As for $\tilde\kappa$, we assumed the measurability we need.
\end{proof}

As explained in Remark~\ref{rem:gain}, the main gain in Theorem~\ref{thm:strongbs} is about geometry. Convincing examples appear to be lacking so far: in the situations we came up with, we could use a few tricks to get back to the case of fibrations. Still, we find conceptual value in not having to \emph{assume} that we are in the fibration setup or to use ad hoc tricks. This conceptual value says more about the nature of our arguments and our understanding than about the list of concrete examples we can handle.

Regarding the probabilistic gain, we indeed extend the scope of Theorem~\ref{thm:bs}, but in a way that Benjamini and Schramm could have covered with their techniques if they wanted to. It is easy to come up with diverse examples. Let us provide one that we find natural and interesting.

\begin{enonce}[remark]{Example}
    \label{example}
    Assume that every $\pi^{-1}(x)$ has cardinality 2 and that all $p_v$'s are equal to $\sfrac12$. Independently, for each $x\in V_\Sm$, prescribe $\mathfrak{X}\cap \pi^{-1}(x)$ to be a uniformly chosen singleton included in $\pi^{-1}(x)$. In other words, the percolation model on $\La$ is defined by picking independently and uniformly in each $\pi$-fibre one vertex that we keep, letting the other aside.
\end{enonce}

Using Theorem~\ref{main} instead of Corollary~\ref{coro:indep} would lead to an even more general version of Theorem~\ref{thm:strongbs}. Using Corollary~\ref{coro:exchange} would lead to a variation of it, and other examples. A priori, the probabilistic gain in this generalisation and this variation cannot be obtained by direct adaptation of \cite{bs}, as their exploration argument relies heavily on independence.

Let us conclude Section~\ref{sec:genbs} by an amusing observation. Let $\Sm$ be a countable locally finite graph. We define the graph $\La$ to be a collection of disjoint, noninteracting infinite rays, one per $\mathbb{N}$-indexed self-avoiding path in $\Sm$. There is a surjective map from $V_\La$ to $V_\Sm$ satisfying the condition of Theorem~\ref{thm:strongbs}. Therefore, by Theorem~\ref{thm:strongbs}, it seems that $p_c^\mathrm{site}(\Sm)\ge p_c^\mathrm{site}(\La)$. But we have $p_c^\mathrm{site}(\La)=1$, as it consists of a bunch of rays and each ray has $p_c=1$. We seem to get into a contradiction, namely that every $\Sm$ satisfies $p_c^\mathrm{site}(\Sm)=1$. It turns out that we glossed over a single but important detail: for this argument to be sound, we need the vertex-set of $\La$ to be countable, which seldom happens. For sure, whenever $p_c^\mathrm{site}(\Sm)<1$, the set of its $\mathbb{N}$-indexed self-avoiding paths is uncountable: this fact is very easy but a convoluted proof is given by the proof by contradiction of the present paragraph. In some loose sense, this remark recalls that percolation does not behave well if the ways to reach infinity are decomposed into uncountably many types, and conversely Theorem~\ref{thm:strongbs} says that it does behave well for countably many types.

\subsection{A question}
For every $n\ge 0$, let $\mathscr{I}_n$ denote the graph with vertex-set $\{0,1,\dots,n\}$ and with edges corresponding to $\{i,j\}$ such that $|i-j|=1$. Likewise, for $n\ge 3$, let $\mathscr{C}_n$ denote the $n$-cycle, i.e. the graph with vertex-set $\mathbb{Z}/n\mathbb{Z}$ and with edges corresponding to $\{i,j\}$ such that $j=i\pm1$.
Also, given two graphs $\mathscr{G}$ and $\mathscr{H}$, define the product $\mathscr{G}\times\mathscr{H}$ by letting $V_{\mathscr{G}\times\mathscr{H}}:=V_\mathscr{G}\times V_\mathscr{H}$ and declaring $(u,x)$ to be adjacent to $(v,y)$ if either ``$u=v$ and $y$ is a neighbour of $x$'' or ``$x=y$ and $v$ is a neighbour of $u$''. In this section, $p_c$ can be taken to mean either always $p_c^\mathrm{site}$ or always $p_c^\mathrm{bond}$, both leading to a correct reading.

Let $\mathscr{G}$ be a countable locally finite graph. By graph-inclusion, it is clear that \[
m\ge n\implies p_c(\mathscr{G}\times\mathscr{I}_m)\le p_c(\mathscr{G}\times\mathscr{I}_n).
\]
Because of Theorem~\ref{thm:bs}, it is also the case that for every $m,n\ge3$, whenever $m$ is a multiple of $n$, we have $p_c(\mathscr{G}\times\mathscr{C}_m)\le p_c(\mathscr{G}\times\mathscr{C}_n)$.

\begin{enonce}{Question}
    Is it the case that for every countable locally finite graph $\mathscr{G}$ and every $m\ge n\ge 3$, we have $p_c(\mathscr{G}\times\mathscr{C}_m)\le p_c(\mathscr{G}\times\mathscr{C}_n)$?
\end{enonce}

This question asks whether the monotonicity that holds for the ``slab'' $\mathscr{G}\times\mathscr{I}_n$ also holds for its ``periodised'' version $\mathscr{G}\times\mathscr{C}_n$.

\section{Revisiting the BK inequality}
\label{sec:bk}

Let $C$ be a finite set. A subset $\mathcal{E}\subset \{0,1\}^C$ is said to be \defini{increasing} if its indicator function is weakly increasing for the product order. In other words, $\mathcal{E}$ is increasing if and only if for every $\omega\in\mathcal{E}$ and every $\omega'\in\{0,1\}^C$ satisfying $\forall i,\,\omega_i\le \omega'_i$, we have $\omega'\in\mathcal{E}$.

Given two subsets $\mathcal{E}_1$ and $\mathcal{E}_2$ of $\{0,1\}^C$, the \defini{disjoint occurrence} of $\mathcal{E}_1$ and $\mathcal{E}_2$ is denoted by $\mathcal{E}_1\circ\mathcal{E}_2$ and defined as follows. It is the set of all $\omega\in\{0,1\}^C$ such that there are disjoint subsets $P_1$ and $P_2$ of $C$ such that the following two conditions hold:
\begin{itemize}
    \item for every $\omega'\in\{0,1\}^C$ such that $\omega'$ and $\omega$ agree on their restriction to $P_1$, we have $\omega'\in \mathcal{E}_1$,
    \item for every $\omega'\in\{0,1\}^C$ such that $\omega'$ and $\omega$ agree on their restriction to $P_2$, we have $\omega'\in \mathcal{E}_2$.
\end{itemize}
In other words, we can find two disjoint ``witnesses'', one guaranteeing that $\mathcal{E}_1$ holds and the other one doing the same for $\mathcal{E}_2$. %Observe that if $\mathcal{E}_1$ and $\mathcal{E}_2$ are \emph{increasing}, then, on $\mathcal{E}_1\circ\mathcal{E}_2$, we can always find $P_1$ and $P_2$ as above such that, in addition, $\omega$ is equal to 1 in restriction to $P_1\cup P_2$. We will refer to such pairs $(P_1,P_2)$ as \defini{positive disjoint witnesses}.

The BK inequality, due to van den Berg and Kesten \cite{bk}, is very useful in percolation theory and goes as follows.

\begin{theo}[BK inequality]
Let $C$ be a finite set. Let $\mathbb{P}$ be a probability measure on $\{0,1\}^C$ of the form $\bigotimes_i \mathrm{Bernoulli}(p_i)$. Let $\mathcal{E}_1$ and $\mathcal{E}_2$ be two increasing subsets of $\{0,1\}^C$.

Then, we have $\mathbb{P}(\mathcal{E}_1\circ \mathcal{E}_2)\le \mathbb{P}(\mathcal{E}_1)\mathbb{P}(\mathcal{E}_2)$.
\end{theo}

A typical application of the BK inequality in percolation theory goes as follows. Let $o$ be a vertex in some graph, and let $n\ge 1$. Perform bond percolation of some fixed parameter $p$. Denote by $a_1(n,p)$ the probability that there is some open path starting at $o$ that touches the sphere of radius $n$ centred at $o$. Denote by $a_2(n,p)$ the probability that there are two \emph{edge-wise disjoint} open paths starting at $o$ that touch the sphere of radius $n$ centred at $o$. As the existence of an open path from the origin to the $n$-sphere is an increasing event, the BK inequality yields $a_2(n,p)\ge a_1(n,p)^2$. Therefore, any lower bound on $a_1(n,p)$ automatically yields some lower bound for $a_2(n,p)$. Getting sharper inequalities than $a_2(n,p)\ge a_1(n,p)^2$ requires hard work; see \cite{bn,gm,rv}.

\begin{proof}[Proof of the BK inequality given our results]
For now, let us assume that all $p_i$ are equal to a fixed value $p\in[0,1]$, which is the case we are usually interested in percolation theory.
Let $A:=C\times\{1,2\}$, $B:=C$ and $\pi:A\to B$ be the map $(i,j)\longmapsto i$. Endow $\{0,1\}^B$ with the probability measure  $\mathbb{P}=\mathrm{Bernoulli}(p)^{\otimes B}$. For every $i\in B$, let us define $X_i:\{0,1\}^B\to \{0,1\}$ by $X_i(\omega)= \omega_i$. At last, let $\mathbf{P}:=\mathrm{Bernoulli}(p)^{\otimes A}$.

Let us pick a random section $S$ as follows. Whenever $X\in\mathcal{E}_1\circ\mathcal{E}_2$, pick disjoint witnesses $P_1$ and $P_2$ and make sure that $S$ maps every element of $P_1$ to 1 and every element of $P_2$ to 2 --- there is no constraint outside $P_1\cup P_2$. Such a choice is possible as $P_1$ and $P_2$ are disjoint. %Let us stress that we can pick \emph{positive} disjoint witnesses because of the assumption that our events are increasing. 
When $X\notin\mathcal{E}_1\circ\mathcal{E}_2$, the random section $S$ can be arbitrarily chosen. Let $Y_{i}^{(j)}:=X_{i}\,\mathds{1}_{S(i)=(i,j)}$.

Let $\mathcal{E}_\mathrm{lift}$ be the set of all $(\omega^{(1)},\omega^{(2)})\in \{0,1\}^A$ such that $\omega^{(1)}\in \mathcal{E}_1$ and $\omega^{(2)}\in\mathcal{E}_2$. This event is increasing, as $\mathcal{E}_1$ and $\mathcal{E}_2$ are themselves increasing. 
By construction, we have $\mathcal{E}_1\circ\mathcal{E}_2\subset \{(Y^{(1)},Y^{(2)})\in\mathcal{E}_\mathrm{lift}\}$, so that $\mathbb{P}(\mathcal{E}_1\circ\mathcal{E}_2)\leq\mathbb{P}((Y^{(1)},Y^{(2)})\in\mathcal{E}_\mathrm{lift})$. Because of Theorem~\ref{lakon} and as $\mathcal{E}_\mathrm{lift}$ is increasing, we have $\mathbb{P}((Y^{(1)},Y^{(2)})\in\mathcal{E}_\mathrm{lift})\le \mathbf{P}(\mathcal{E}_{\mathrm{lift}})$. By independence, we have $\mathbf{P}(\mathcal{E}_{\mathrm{lift}})=\mathbb{P}(\mathcal{E}_1)\mathbb{P}(\mathcal{E}_2)$. We conclude by composing the inequalities:
\[
\mathbb{P}(\mathcal{E}_1\circ\mathcal{E}_2)\leq\mathbb{P}((Y^{(1)},Y^{(2)})\in\mathcal{E}_\mathrm{lift})\le \mathbf{P}(\mathcal{E}_{\mathrm{lift}})=\mathbb{P}(\mathcal{E}_1)\mathbb{P}(\mathcal{E}_2).
\]
To get the version with different $p_i$, it suffices to have a version of Theorem~\ref{lakon} with different $p_i$. Corollary~\ref{coro:indep-va} is more than sufficient to this end.
\end{proof}

As Corollary~\ref{coro:indep-va} and Theorem~\ref{main} are much more general than Theorem~\ref{lakon}, we can obtain variations of the BK inequality. For instance, with the same proof as before but using Corollary~\ref{coro:indep-va} instead of Theorem~\ref{lakon}, we get the following result. It may well not be new but we currently do not know of a place where such a statement is written.

\begin{prop}
\label{prop:bk}
Let $C$ be a finite set. Let $\mathbb{P}$ be a probability measure on $\{0,1\}^C$ of the form $\bigotimes_i \mathrm{Bernoulli}(p_i)$. Let $\mathcal{E}_1$ and $\mathcal{E}_2$ be two increasing subsets of $\{0,1\}^C$. Let $\mathbf{P}$ be a probability measure on $\{0,1\}^{C\times\{1,2\}}$ of the form $\bigotimes _{i\in C}\rho_i$, where $\rho_i$ is a probability measure on $\{0,1\}^2$ such that both its marginals are Bernoulli of parameter at least $p_i$.

Then, we have $\mathbb{P}(\mathcal{E}_1\circ \mathcal{E}_2)\le \mathbf{P}(\mathcal{E}_1\times\mathcal{E}_2)$.
\end{prop}

For instance, applying Proposition~\ref{prop:bk} in the situation of Example~\ref{example} with $p_i=\sfrac12$, we recover Theorem~3 of \cite{bn}, under the additional assumptions that we consider increasing events. Beffara and Nolin attribute their Theorem~3 to Reimer \cite[Theorem~1.2]{r}.

\begin{rema}
Reimer proved that the BK inequality holds without assuming that the events under consideration are increasing \cite{r}. We cannot use our techniques to prove this stronger inequality. Indeed, it is crucial that $\mathcal{E}_\mathrm{lift}$ is increasing in order to obtain the inequality $\mathbb{P}((Y^{(1)},Y^{(2)})\in\mathcal{E}_\mathrm{lift})\le \mathbf{P}(\mathcal{E}_{\mathrm{lift}})$. To get Reimer's inequality, it would suffice to have a lifting theorem which, in the phrasing of the abstract, also enables Alice to lift zeroes in the same way she lifts ones, without Bob being able to overwrite. However, this ``theorem'' fails badly, even for $|A|=2$ and $|B|=1$.
\end{rema}

\section{Revisiting strict monotonicity of $p_c$ with respect to quotient}
\label{sec:strict}

In \cite{ms}, Martineau and Severo proved a result regarding \emph{strict} monotonicity of $p_c$ for bond percolation and site percolation. This answered in particular Question~1 of \cite{bs}. In this section, we provide a new proof of this result for the case of bond percolation.

This proof does not rely on the new coupling results of other sections but it still has a very strong scent of coupling. Contrary to \cite{ms}, our proof does not use the theory of essential enhancements. This section shares with \cite{v} the philosophy of studying percolation with couplings instead of differential inequalities.
Before stating the main result of this section, namely Theorem~\ref{thm:strict}, let us introduce some relevant definitions.

Given a graph $\mathscr{G}$, a \defini{subtree} is a graph $\mathscr{T}$ that is a tree and satisfies $V_\mathscr{T}\subset V_\mathscr{G}$ and $E_\mathscr{T}\subset E_\mathscr{G}$.
Let $\La$ and $\Sm$ denote two countable locally finite graphs. A map $\pi:V_\La\to V_\Sm$ satisfies the \defini{disjoint tree-lifting property} if for any two distinct vertices $x_1$ and $x_2$ in $\La$ satisfying $\pi(x_1)=\pi(x_2)$, for every subtree $\mathscr{T}$ of $\Sm$ containing the vertex $\pi(x_1)$, there are subtrees $\mathscr{T}_1$ and $\mathscr{T}_2$ of $\La$ such that:
\begin{itemize}
    \item the vertex-sets of $\mathscr{T}_1$ and $\mathscr{T}_2$ are disjoint,
    \item for every $i\in\{1,2\}$, the vertex $x_i$ belongs to $\mathscr{T}_i$,
    \item for every $i\in\{1,2\}$, the map $\pi$ induces a well-defined graph isomorphism from $\mathscr{T}_i$ to $\mathscr{T}$.
\end{itemize}

In this paper, a \defini{cycle} in a graph $\mathscr{G}$ is a finite sequence of vertices $(v_0,\dots,v_\ell)$ such that each vertex is adjacent \emph{or equal} to the previous one, and such that $v_\ell$ is equal to $v_0$. The number $\ell$ is called the \defini{length} of the cycle. Given $\pi:V_\La\to V_\Sm$, say that \defini{we can switch floors by lifting short cycles} if there is a constant $c$ such that the following holds: for every vertex $x$ in $\La$, there is a distinct vertex $y$ in $\La$ satisfying $\pi(x)=\pi(y)$ and such that there is a cycle of length at most $c$ in $\Sm$ that admits a lift starting at $x$ and ending at $y$. In the previous sentence, a lift of a cycle $(v_0,\dots,v_\ell)$ is a path $(x_0,\dots,x_\ell)$ in $\La$ such that, for every $i\le \ell$, we have $\pi(x_i)=v_i$. We allow consecutive vertices of a cycle to be equal because we want Theorem~\ref{thm:strict} to cover situations such as $\La$ being the cubic lattice, $\Sm$ the square lattice and $\pi:(x,y,z)\longmapsto (x,y)$.

At last, recall that a graph has \defini{bounded degree} if there is some constant $D$ such that each vertex has at most $D$ neighbours.

\begin{theo}[Martineau--Severo, 2019]
\label{thm:strict}
Let $\La$ and $\Sm$ be two countable locally finite graphs. Assume that $\Sm$ has bounded degree. Further, assume that there is a surjective map $\pi:V_\La\to V_\Sm$ satisfying the disjoint tree-lifting property and such that we can switch floors by lifting short cycles.

If $p_c^\mathrm{bond}(\La)<1$, then we have $p_c^\mathrm{bond}(\La)<p_c^\mathrm{bond}(\Sm)$.
\end{theo}

\begin{rema}
    It would be natural to further assume that $\pi$ is a fibration. One can put this condition in the theorem or not without changing the scope of the result, as any $\pi$ satisfying the conditions of Theorem~\ref{thm:strict} is necessarily a fibration: any time we want to lift an edge to some $x_1\in V_\La$, we can use floor-switching to find a distinct $x_2\in V_\La$ such that $\pi(x_1)=\pi(x_2)$, and then apply the weak disjoint-tree lifting property to $(x_1,x_2)$ and a tree consisting of a single edge. The fact that $\pi$ is a fibration will indeed be used in Section~\ref{sec:last-step}.
\end{rema}

\begin{rema}\label{rem:compawithms}
The statement we give here does not correspond exactly to \cite[Theorem~1.1]{ms}. Our Theorem~\ref{thm:strict} directly implies theirs (for bond percolation only). The techniques of \cite{ms} actually prove the above statement for both site and bond percolation, even though they did not state it in this way. Theorem~1.1 of \cite{ms} corresponds to further assuming that $\La$ has bounded degree, that $\pi$ is 1-Lipschitz, and that $\La$ and $\Sm$ are nonempty and connected. Apart from the connectedness of $\Sm$, these additional assumptions are actually not used in the proof of \cite{ms}. Connectedness of $\Sm$ is really used. However, it could be removed, as in our proof, by noticing that the bounds obtained in \cite{ms} are not mere strict inequalities but are in some sense ``uniform''.
\end{rema}

\begin{rema}
We decided not to assume connectedness of the graphs for two reasons. First, obtaining such a result is strictly more informative and captures some ``uniformity'' of the strict inequality: if we have sequences of connected graphs $(\La_n)$ and $(\Sm_n)$ such that $(\La_n,\Sm_n)$ satisfies the assumptions of Theorem~\ref{thm:strict} with bounds on the degree and on the $c$ of ``short cycles'' that are uniform in $n$, then our result applied to $\bigsqcup_n \La_n$ and  $\bigsqcup_n \Sm_n$ guarantees that $\inf_n p_c^\mathrm{bond}\left(\La_n\right)< \inf_n p_c^\mathrm{bond}\left(\Sm_n\right)$. If we assumed connectedness in Theorem~\ref{thm:strict}, we would know that, for every $n$, the inequality $p_c^\mathrm{bond}\left(\La_n\right)<  p_c^\mathrm{bond}\left(\Sm_n\right)$ holds but we would not be able to derive strict inequality for the infima.

A second reason not to assume connectedness is that the setup we consider is well behaved if we want to transfer knowledge from site-percolation to bond-percolation: see Section~\ref{sec:bondtosite}. Even when $\Sm$ and $\La$ are connected, the graph $\La^\star_\pi$ may not be connected, and our point is that it is not necessary to find a connected substitute for $\La^\star_\pi$. In the end, this second reason is not concretely used here, as we do not prove the site-version but only the bond-version of \cite[Theorem~1.1]{ms}. In other words, we have decided to work in a good framework, even though we do not use its goodness.
\end{rema}

\subsubsection*{Informal overview of the proof strategy}\label{strat} As in \cite{ms}, we want to introduce some sort of new percolation model on $\Sm$ (or a closely related graph) such that we have $p_c(\La)\le p_c^\mathrm{new}(\Sm)<p_c(\Sm)$. To do so, we will first deterministically partition $\Sm$ (or something closely related to it) into blocks, which will be called ``cells''. Then, we will take the usual percolation and allow some ``bonus'' inside each cell, and the whole game is to do so in such a way that at the same time:
\begin{enumerate}
    \item this bonus can be ``implemented, via $\pi$, in the usual percolation on $\La$'',
    \item this bonus is sufficiently strong for the inequality $p_c^\mathrm{new}(\Sm)<p_c(\Sm)$ to be true (and provable).
\end{enumerate}

A noteworthy feature of this proof is that each of the two inequalities $p_c(\La)\le p_c^\mathrm{new}(\Sm)$ and $p_c^\mathrm{new}(\Sm)<p_c(\Sm)$ is proved by using some exploration algorithm but that the algorithms are different --- one specific for each inequality. There is no need for any form of ``compatibility'' between these algorithms, apart from correctly describing our percolation process.
%KEEP FOR ARXIV, REMOVE FOR PUBLISHED
%A posteriori, it is rather obvious why there is no need for compatibility: each of them is used \emph{alone} to prove an inequality, and inequalities can be composed, irrespective of the tools used to prove them. But in the research phase, where the final structure of the proof is not clear yet, this may have the appearance of an obstacle. In case one encounters similar hurdles in other situations but cannot solve them that easily, we believe that Lemma~\ref{lem:conditional} can be a helpful tool to break down compatibility checking.

\newcommand{\Pp}{\mathbb{P}}
\newcommand{\cell}{\mathcal{C}}
\newcommand{\precell}{\tilde{C}}
\newcommand{\centres}{\mathfrak{C}}
\newcommand{\centers}{\centres}
\newcommand{\Lap}{\mathscr{L}'}
\newcommand{\Smap}{\mathscr{S}'}
\newcommand{\super}{K^+_{p,s}}
\newcommand{\bound}{\mathfrak{B}}
\newcommand{\pcaug}{p_c^\mathrm{aug}}
\newcommand{\minconfig}{\mathsf{empty}}
\newcommand{\maxconfig}{\mathsf{full}}

\subsection{Step 1: reduction of the problem}

To begin with, let us explain why it suffices to prove $p_c(\La)<p_c(\Sm)$ in the following setup.

\begin{enonce}[remark]{Notation}
    Given a graph $\mathscr{G}$, for $A\subset V_\mathscr{G}$, we denote by $E_A$ the set of all edges of $\mathscr{G}$ the two endpoints of which belong to $A$. We define the vertex-boundary $\partial_V A$ to be the set of all vertices in $A$ having a neighbour outside $A$. The interior of $A$ is defined as $\mathring{A}:=A\setminus\partial_V A$. The edge-boundary of $A$ is $\partial_E A=\{e\in E_\mathscr{G}\,:\,|e\cap A|=1\}$. The notation $B_u(\alpha)$ stands for the closed ball of radius $\alpha$ centred at $u$, i.e.~$B_u(\alpha)=\{v\in V_\mathscr{G}\,:\,d(u,v)\le \alpha\}.$
\end{enonce}

\begin{enonce*}[remark]{The setup}
    The graphs $\La$ and $\Sm$ have nonempty countable vertex-sets. The graph $\La$ is locally finite. The graph $\Sm$ has bounded degree and satisfies $0<p_c^\mathrm{bond}(\Sm)<1$. We have a surjective map $\pi:V_\La\to V_\Sm$ with the disjoint tree-lifting property. We also have positive integers $r$ and $R$ such that the following holds. For every vertex $x$ in $\La$, there is a distinct vertex $y$ in $\La$ satisfying $\pi(x)=\pi(y)$ and such that there is a cycle of length at most $2r$ in $\Sm$ that admits a lift starting at $x$ and ending at $y$. Furthermore, we have a set of vertices $\centres\subset V_\Sm$ such that, for every $u\in\centres$, there exists a set $\cell_u\subset V_\Sm$ such that the following conditions hold:
\begin{enumerate}
    \item \label{item:partition} when $u$ ranges over $\centres$, the $E_{\cell_u}$'s form a partition of $E_{\Sm}$,
    \item for every $u\in\centres$, the interior of $\cell_u$ is connected,
    \item for every $u\in \centres$, we have the inclusions $B_u(r)\subset\mathring\cell_u$ and $\cell_u\subset B_u(R)$.
\end{enumerate}
\end{enonce*}

We start with $(\La_0,\Sm_0,\pi_0)$ satisfying the assumptions of the theorem. As $\Sm_0$ has bounded degree, it has $p_c^\mathrm{bond}(\Sm_0)>0$. Without loss of generality, assume that $p_c^\mathrm{bond}(\Sm_0)<1$, as the statement is trivial when $p_c^\mathrm{bond}(\Sm_0)=1$. In particular, $V_{\Sm_0}$ is nonempty and so is $V_{\La_0}$, by surjectivity of $\pi$. We can pick $r_0\ge1$ such that for every vertex $x$ in $\La_0$, there is a distinct vertex $y$ in $\La$ satisfying $\pi(x)=\pi(y)$ and such that there is a cycle of length at most $2r_0+1$ in $\Sm$ that admits a lift starting at $x$ and ending at $y$. Removing edges from $\La_0$ cannot decrease its $p_c^\mathrm{bond}$, so we may remove from it all edges that are not mapped, via $\pi_0$, to edges of $\Sm_0$. Notice that doing so preserves the assumptions made on $\pi_0$, which are only concerned by how $\pi_0$ lifts edges of $\Sm_0$. In other words, without loss of generality, we may and will assume that $\pi_0$ induces a well defined map from $E_{\La_0}$ to $E_{\Sm_0}$.  We want to prove that $p_c^\mathrm{bond}(\La_0)<p_c^\mathrm{bond}(\Sm_0)$.

Let $\centres$ denote a maximal $(2r_0+1)$-separated subset of $V_{\Sm_0}$, i.e.~a set any two distinct elements of which are at distance at least $2r_0+1$ and that is maximal with respect to inclusion among all such sets. Fix, for every $u\in\centers$, a set $\precell_u \subset V_{\Sm_0}$ in such a way that the following conditions hold:
\begin{enumerate}
    \item when $u$ ranges over $\centres$, the $\precell_u$'s form a partition of $V_{\Sm_0}$,
    \item\label{item:connected-interior} every $\precell_u$ is connected,
    \item for every $u\in \centres$, we have the inclusions $B_u(r_0)\subset \precell_u\subset B_u(2r_0)$.
\end{enumerate}
Such a choice is indeed possible: for instance, one can take the Voronoi cells associated with $\centres$ and tie break arbitrarily all cases of equality --- see Figure~\ref{fig:cells}.

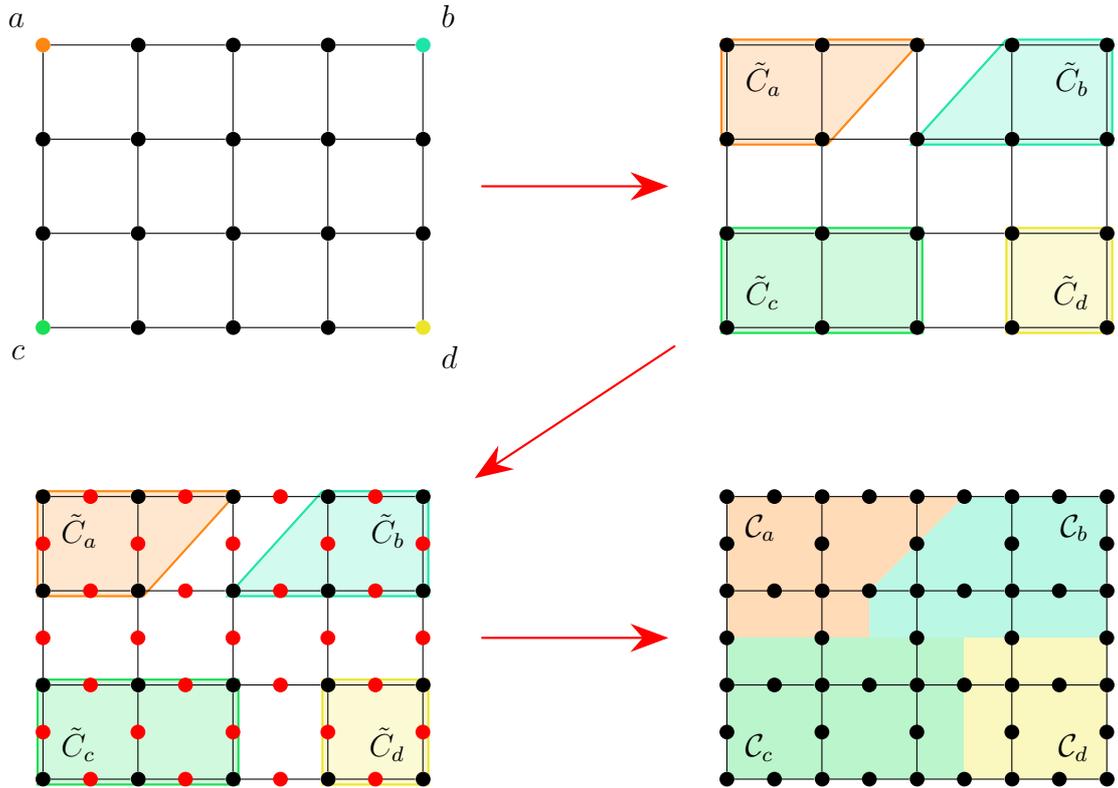
\begin{figure}[h]
\centering

\begin{tikzpicture}
\matrix(base)[matrix of nodes, nodes in empty cells, nodes={circle, fill = black, scale=.5}, column sep = 30pt, row sep = 30pt]
{
 \node(base-1-1)[fill=u1]{};& & & &\node(base-1-5)[fill=u2]{};\\
 & & & & \\
 & & & & \\
 \node(base-4-1)[fill=u3]{};& & & &\node(base-4-5)[fill=u4]{};\\
};
\foreach \a in {1, ..., 4}{
\draw (base-\a-1)-- (base-\a-2);
\draw (base-\a-2)-- (base-\a-3);
\draw (base-\a-3)-- (base-\a-4);
\draw (base-\a-4)-- (base-\a-5);
}
\foreach \a in {1, ..., 5}{
\draw (base-1-\a)-- (base-2-\a);
\draw (base-2-\a)-- (base-3-\a);
\draw (base-3-\a)-- (base-4-\a);
}
\node[above left=1pt of base-1-1, text=black]{$a$};
\node[above right=1pt of base-1-5, color=u2, text=black]{$b$};
\node[below left=1pt of base-4-1, color=u3, text=black]{$c$};
\node[below right=1pt of base-4-5, color = u4, text= black]{$d$};

\matrix(voron)[right=100pt of base, matrix of nodes, nodes in empty cells, nodes={circle, fill = black, scale=.5}, column sep = 30pt, row sep = 30pt]
{
 & & & & \\
 & & & & \\
 & & & & \\
 & & & & \\
};
\foreach \a in {1, ..., 4}{
\draw (voron-\a-1)-- (voron-\a-2);
\draw (voron-\a-2)-- (voron-\a-3);
\draw (voron-\a-3)-- (voron-\a-4);
\draw (voron-\a-4)-- (voron-\a-5);
}
\foreach \a in {1, ..., 5}{
\draw (voron-1-\a)-- (voron-2-\a);
\draw (voron-2-\a)-- (voron-3-\a);
\draw (voron-3-\a)-- (voron-4-\a);
}
\node[below right=1pt of voron-1-1]{$\precell_{a}$};
\node[below left=1pt of voron-1-5]{$\precell_{b}$};
\node[above right=1pt of voron-4-1]{$\precell_{c}$};
\node[above left=1pt of voron-4-5]{$\precell_{d}$};

\begin{scope}[on background layer]
\filldraw[preaction={fill, u1!20}, draw=u1,pattern={vertical lines}, pattern color=u1,  thick] (voron-1-1.north west)--(voron-1-3.north east)--(voron-2-2.south east)--(voron-2-1.south west)--(voron-1-1.north west);
\end{scope}

\begin{scope}[on background layer]
\filldraw[preaction={fill, u2!20},pattern = {north east lines}, pattern color = u2, draw=u2, thick] (voron-1-4.north west)--(voron-1-5.north east)--(voron-2-5.south east)--(voron-2-3.south west)--(voron-1-4.north west);
\end{scope}

\begin{scope}[on background layer]
\filldraw[preaction={fill, u3!20}, pattern={north west lines}, pattern color=u3, draw=u3, thick] (voron-3-1.north west)--(voron-3-3.north east)--(voron-4-3.south east)--(voron-4-1.south west)--(voron-3-1.north west);
\end{scope}

\begin{scope}[on background layer]
\filldraw[preaction={fill, u4!20}, draw=u4, pattern = {horizontal lines}, pattern color = u4,  thick] (voron-3-4.north west)--(voron-3-5.north east)--(voron-4-5.south east)--(voron-4-4.south west)--(voron-3-4.north west);
\end{scope}

\matrix(double)[below=50pt of base, matrix of nodes, nodes in empty cells, nodes={circle, fill = black, scale=.5}, column sep = 30pt, row sep = 30pt]
{
 & & & & \\
 & & & & \\
 & & & & \\
 & & & & \\
};
\foreach \a in {1, ..., 4}{
\draw (double-\a-1)-- (double-\a-2) node [midway, circle, fill = redgentil, scale=.5]{};
\draw (double-\a-2)-- (double-\a-3) node [midway, circle, fill = redgentil, scale=.5]{};
\draw (double-\a-3)-- (double-\a-4) node [midway, circle, fill = redgentil, scale=.5]{};
\draw (double-\a-4)-- (double-\a-5) node [midway, circle, fill = redgentil, scale=.5]{};
}
\foreach \a in {1, ..., 5}{
\draw (double-1-\a)-- (double-2-\a) node [midway, circle, fill = redgentil, scale=.5]{};
\draw (double-2-\a)-- (double-3-\a) node [midway, circle, fill = redgentil, scale=.5]{};
\draw (double-3-\a)-- (double-4-\a) node [midway, circle, fill = redgentil, scale=.5]{};
}
\node[below right=1pt of double-1-1]{$\precell_{a}$};
\node[below left=1pt of double-1-5]{$\precell_{b}$};
\node[above right=1pt of double-4-1]{$\precell_{c}$};
\node[above left=1pt of double-4-5]{$\precell_{d}$};

\begin{scope}[on background layer]
\filldraw[preaction={fill, u1!20}, draw=u1, pattern={vertical lines}, pattern color=u1,  thick] (double-1-1.north west)--(double-1-3.north east)--(double-2-2.south east)--(double-2-1.south west)--(double-1-1.north west);
\end{scope}

\begin{scope}[on background layer]
\filldraw[preaction={fill, u2!20},pattern = {north east lines}, pattern color = u2, draw=u2, thick] (double-1-4.north west)--(double-1-5.north east)--(double-2-5.south east)--(double-2-3.south west)--(double-1-4.north west);
\end{scope}

\begin{scope}[on background layer]
\filldraw[preaction={fill, u3!20}, pattern={north west lines}, pattern color=u3, draw=u3, thick] (double-3-1.north west)--(double-3-3.north east)--(double-4-3.south east)--(double-4-1.south west)--(double-3-1.north west);
\end{scope}

\begin{scope}[on background layer]
\filldraw[preaction={fill, u4!20}, draw=u4, pattern = {horizontal lines}, pattern color = u4,  thick] (double-3-4.north west)--(double-3-5.north east)--(double-4-5.south east)--(double-4-4.south west)--(double-3-4.north west);
\end{scope}

\matrix(cell)[right=100pt of double, matrix of nodes, nodes in empty cells, nodes={circle, fill = black, scale=.5}, column sep = 30pt, row sep = 30pt]
{
 & & & & \\
 & & & & \\
 & & & & \\
 & & & & \\
};
\foreach \a in {1, ..., 4}{
\draw (cell-\a-1)-- (cell-\a-2) node(\a-12h) [midway, circle, fill = black, scale=.5]{};
\draw (cell-\a-2)-- (cell-\a-3) node(\a-23h) [midway, circle, fill = black, scale=.5]{};
\draw (cell-\a-3)-- (cell-\a-4) node(\a-34h) [midway, circle, fill = black, scale=.5]{};
\draw (cell-\a-4)-- (cell-\a-5) node(\a-45h) [midway, circle, fill = black, scale=.5]{};
}
\foreach \a in {1, ..., 5}{
\draw (cell-1-\a)-- (cell-2-\a) node(\a-12v) [midway, circle, fill = black, scale=.5]{};
\draw (cell-2-\a)-- (cell-3-\a) node(\a-23v) [midway, circle, fill = black, scale=.5]{};
\draw (cell-3-\a)-- (cell-4-\a) node(\a-34v) [midway, circle, fill = black, scale=.5]{};
}
\node[below right=1pt of cell-1-1]{$\cell_{a}$};
\node[below left=1pt of cell-1-5]{$\cell_{b}$};
\node[above right=1pt of cell-4-1]{$\cell_{c}$};
\node[above left=1pt of cell-4-5]{$\cell_{d}$};
\coordinate(midab) at ($(2-23h)!0.5!(3-23h)$);
\coordinate(midcd) at ($(4-23v)!0.5!(3-23v)$);

\begin{scope}[on background layer]
\fill[preaction={fill, u1!30}, pattern={vertical lines}, pattern color=u1] (cell-1-1.center)--(1-34h.center)--(2-23h.center)--(midab.center)--(2-23v.center)--(1-23v.center)--(cell-1-1.center);
\end{scope}

\begin{scope}[on background layer]
\fill[preaction={fill, u2!30},pattern = {north east lines}, pattern color = u2] (1-34h.center)--(cell-1-5.center)--(5-23v.center)--(midab.center)--(2-23h.center)--(1-34h.center);
\end{scope}

\begin{scope}[on background layer]
\fill[preaction={fill, u3!30}, pattern={north west lines}, pattern color=u3] (1-23v.center)--(midcd.center)--(3-34h.center)--(4-34h.center)--(cell-4-1.center)--(1-23v.center);
\end{scope}

\begin{scope}[on background layer]
\fill[preaction={fill, u4!30}, pattern = {horizontal lines}, pattern color = u4] (midcd.center)--(4-23v.center)--(5-23v.center)--(cell-4-5.center)--(4-34h.center)--(midcd.center);
\end{scope}
\draw [-{Stealth[length=5mm]},shorten >=15pt,shorten <=15pt,thick, color = redgentil] (base) -- (voron);
\draw [-{Stealth[length=5mm]},shorten >=15pt,shorten <=15pt,thick, color = redgentil] (voron) -- (double);
\draw [-{Stealth[length=5mm]},shorten >=15pt,shorten <=15pt,thick, color = redgentil] (double) -- (cell);
\end{tikzpicture}
\caption{Going from $\Sm_0$ to $\Sm$ and defining the cells.}
\label{fig:cells}
\end{figure}

Given a graph $\mathscr{G}$, define a new graph $\mathscr{G}'$ by performing a so-called \emph{edge subdivision} at each edge, i.e.~by adding a new vertex at the middle of each edge. Formally, one has $V_{\mathscr{G}'}=V_{\mathscr{G}}\sqcup E_{\mathscr{G}}$. As for the edges, for every $u\in V_{\mathscr{G}}$ and every edge $e\in E_\mathscr{G}$ containing $u$, we declare $u$ and $e$ to be adjacent in $\mathscr{G}'$, and no other edge is introduced. Set $\La=\La_0'$ and $\Sm=\Sm_0'$, and $\pi$ the map $V_{\La_0}\sqcup E_{\La_0}\to V_{\Sm_0}\sqcup E_{\Sm_0}$ induced by $\pi_0$.
Notice that, as $\centres$ is a subset of $V_{\Sm_0}$, it is also a subset of $V_\Sm$.
For $u\in \centres$, we define $\cell_u=\precell_u\sqcup \{e\in E_{\Sm_0}\,:\,e\cap \precell_u\neq \varnothing\}$. Observe the edge-boundary of $\precell_u$ in $\Sm_0$ is equal to the vertex-boundary of $\cell_u$ in $\Sm$. At last, we set $r=2r_0$ --- to take into account that distances are twice larger in $\Sm$ than in $\Sm_0$ --- and $R=4r_0+1$. Making all these choices indeed produces data fitting the setup we were targeting.

To conclude the reduction, it remains to explain why the inequality $p_c(\La)<p_c(\Sm)$ suffices to yield $p_c(\La_0)<p_c(\Sm_0)$, which is easy as, for any graph, we have $p_c(\mathscr{G})=p_c(\mathscr{G}')^2$.

\begin{rema}\label{rem:bond}
The only reason why we restrict ourselves to bond percolation goes as follows. In the remaining of the proof, we work in the aforementioned setup to prove the desired conclusion. The arguments will adapt readily to site percolation, provided we replace Item~\ref{item:partition} of the setup by the stronger condition ``the $\cell_u$'s form a partition of $V_\Sm$'' --- and they would still work if we further weaken Item~\ref{item:connected-interior} to ``for every $u\in\centres$ and every $v\in \partial_V\cell_u$, there is a path from $u$ to $v$ in $\cell_u$ that, apart from its final step, avoids $\partial_V\cell_u$''. It is however not clear how to reduce the general situation to this modified setup.
\end{rema}

\subsection{Step 2: Introducing the augmented percolation model on $\Sm$}

Let $p\in[0,1]$ and let $(X_e)_{e\in E_\Sm}$ be a collection of independent Bernoulli random variables of parameter $p$. An edge $e$ satisfying $X_e=1$ is said to be \defini{$p$-open}. Let $s\in[0,1]$ and let $(Y_\cell)$ be a collection of independent Bernoulli random variables of parameter $s$, indexed by the cells. The families $(X_e)$ and $(Y_\cell)$ are also taken to be independent of one another.

For $A\subset V_\Sm$, we define $\super(A)$, the augmented cluster of $A$, to be the smallest set $K\subset V_\Sm$ containing $A$ and satisfying the following ``closure'' properties:
\begin{enumerate}
    \item for every $v\in K$ and every neighbour $w$ of $v$, if $X_{\{v,w\}}=1$, then $w$ belongs to $K$,
    \item\label{item:s-closure} for every cell $\cell$, every $v\in \partial_V \cell$ and every $w\in\mathring{\cell}$ adjacent to $v$ , if $v\in K$ and $X_{\{v,w\}}=1$ and all edges $e\subset \mathring{\cell}$ satisfy $X_e=1$ and $Y_{\cell}=1$, then $K$ contains the whole cell $\cell$.
\end{enumerate}
The augmented cluster of a vertex $v$ is defined to be $\super(v):=\super(\{v\})$. Notice that if $u$ belongs to $\super(v)$, it does not imply that $v$ belongs to $\super(u)$ --- see Figure~\ref{fig:dissymetric}.

\begin{figure}[h]
    \centering

\scalebox{0.65}{\begin{tikzpicture}

\node(cellule)[circle,minimum size = 100pt, fill=gray!20, draw=black]{$\mathring\cell\text{ open, } Y_\cell = 1$};
\node(bord)[circle,draw = black, minimum size = 120pt]{};
\node(u)[left=0pt of bord]{$v$};
\node(upoint)[left=0pt of bord, anchor=center, circle,fill = black, scale=.5]{};
\node(vpoint)[above=0pt of bord, anchor=center, circle,fill = black, scale=.5]{};
\node(v)[above=0pt of vpoint]{$u$};
\draw[black, thick] (upoint.east) -- (cellule.west);
\node[below=10pt of bord]{$\cell$};
\node(center)[right = 150pt of cellule]{};

\node(Zu)[above = 50pt of center, circle,minimum size = 120pt, fill=redgentil!20, draw=redgentil]{};
\node(Zuu)[left=0pt of Zu]{$v$};
\node(Zuupoint)[left=0pt of Zu, anchor=center, circle, fill = redgentil, scale=.5]{};
\node(Zuvpoint)[above=0pt of Zu, anchor=center, circle, fill = redgentil, scale=.5]{};
\node(Zuv)[above=0pt of Zuvpoint]{$u$};
\node()[above right = 2pt of Zu, text = redgentil]{$K^+_{p, s}(v)$};

\node(Zv)[below = 50pt of center, circle,minimum size = 120pt, draw=black]{};
\node(Zvvpoint)[above=0pt of Zv, anchor=center, circle, fill = redgentil, scale=1]{};
\node(Zvupoint)[left=0pt of Zv, anchor=center, circle, fill = black, scale=.5]{};
\node(Zvu)[left=0pt of Zv]{$v$};
\node(Zvv)[above=0pt of Zvvpoint]{$u$};
\draw[black, thick] (upoint.east) -- (cellule.west);
\begin{scope}[on background layer]
\end{scope}
\node()[above right = 2pt of Zvvpoint, text = redgentil]{$K^+_{p, s}(u)$};

\draw [-{Stealth[length=5mm]},shorten >=75pt,shorten <=75pt,thick, color = redgentil] (cellule.center) -- (Zu.center) node[midway, above left]{$v \leftrightarrow \mathring\cell$};
\draw [-{Stealth[length=5mm]},shorten >=75pt,shorten <=75pt,thick, color = redgentil] (cellule.center) -- (Zv.center) node[midway, below left]{$u \nleftrightarrow \mathring\cell$};

\end{tikzpicture}}

    \caption{Assuming that the $X$'s and $Y$'s are 0 in the neighbouring cells, this is a situation where $u$ belongs to $\super(v)$ but $v$ does not belong to $\super(u)$.}
    \label{fig:dissymetric}
\end{figure}
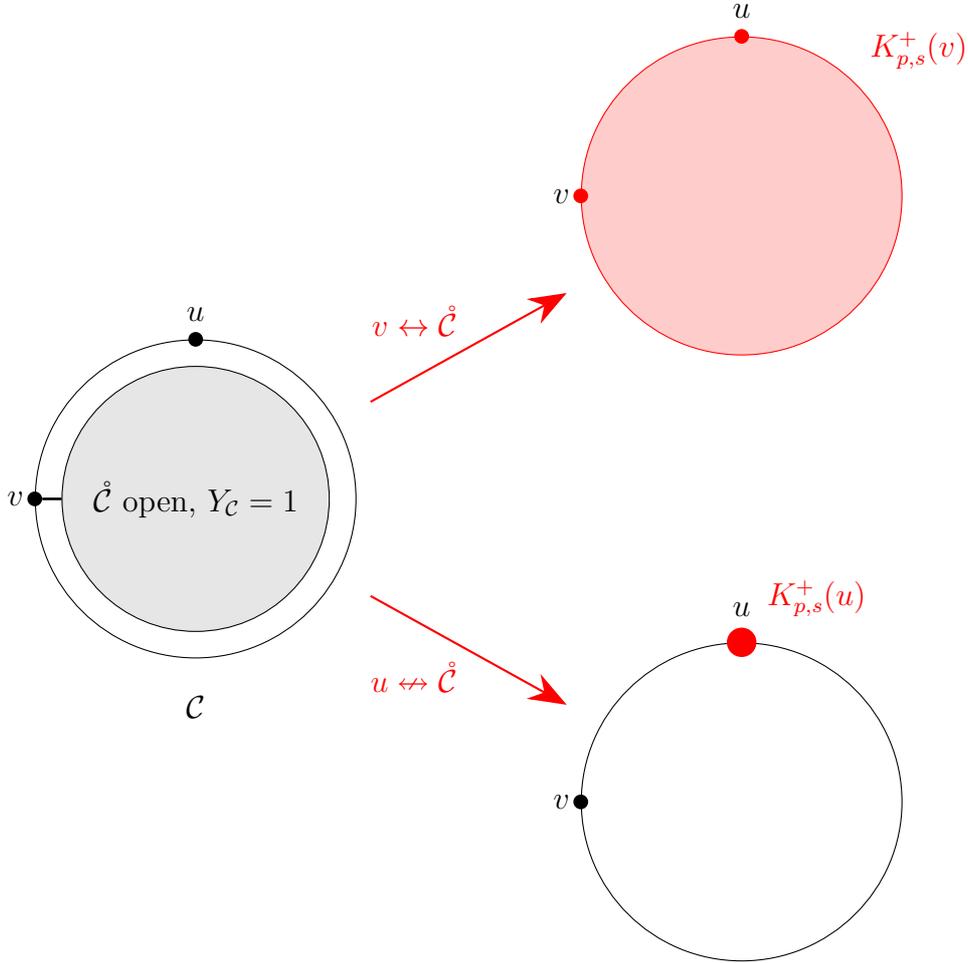

Given $s\in[0,1]$, there is a unique $\pcaug(\Sm,s,v)$ such that
\begin{itemize}
    \item for every $p<\pcaug(\Sm,s,v)$, we have $\Pp(\super(v)\text{ is infinite})=0$,
    \item for every $p>\pcaug(\Sm,s,v)$, we have $\Pp(\super(v)\text{ is infinite})>0$.
\end{itemize}
If $u$ and $v$ are in the same connected component of $\Sm$, then we have $\pcaug(\Sm,s,u)=\pcaug(\Sm,s,v)$. Therefore, if $\mathscr{C}$ denotes a connected component of $\Sm$, the quantity $\pcaug(\mathscr{C},s)$ is well defined. Besides, by Kolmogorov's zero-one law, the following conditions hold:
\begin{itemize}
    \item for every $p<\pcaug(\mathscr{C},s)$, almost surely, for every $v\in\mathscr{C}$, the augmented cluster of $v$ is finite,
    \item for every $p>\pcaug(\mathscr{C},s)$, there is almost surely a vertex in $\mathscr{C}$ such that its augmented cluster is infinite.
\end{itemize}
It makes sense to set $\pcaug(\Sm,s):=\inf_\mathscr{C} \pcaug(\mathscr{C},s)$, as we then have this dichotomy:
\begin{itemize}
    \item for every $p<\pcaug(\Sm,s)$, almost surely, for every $v\in V_\Sm$, the augmented cluster of $v$ is finite,
    \item for every $p>\pcaug(\Sm,s)$, there is almost surely a vertex in $\Sm$ such that its augmented cluster is infinite.
\end{itemize}

What will play the role of $p_c^\mathrm{new}(\Sm)$ from the heuristics of page~\pageref{strat} will be $\pcaug(\Sm,s)$, for small enough values of $s$. More precisely, in Step~3, we shall prove that for every $s\in (0,1]$, we have $\pcaug(\Sm,s)<p_c^\mathrm{bond}(\Sm)$. Then, we shall see in Step~4 that for $s$ small enough, we have $p_c^\mathrm{bond}(\La)\le \pcaug(\Sm,s)$. Step~4 is established by using a standard exploration algorithm, revealing edges at the boundary of the cluster one at a time. The novelty lies in Step~3, which is proved by using \emph{another} algorithm: there, we proceed cell by cell rather than edge by edge.

\subsection{Step 3: For every $s\in (0,1]$, we have $\pcaug(\Sm,s)<p_c^\mathrm{bond}(\Sm)$}

Let $s\in(0,1]$. Our purpose is to find some $v_0\in V_\Sm$ such that $\pcaug(\Sm,s,v_0)< p_c^\mathrm{bond}(\Sm)$.

Let $v_0$ be a vertex lying at the boundary of its cell. Then, one way to explore its $p$-cluster\footnote{namely its connected component for the graph defined by the $p$-open edges: this $p$-cluster is simply a cluster of bond percolation with parameter $p$} is by using the following algorithm. It does not really explore the full cluster, only what happens at the boundaries of the cells --- which is enough, as the cluster is infinite if and only if its intersection with $\bigcup_\cell \partial_V\cell$ is infinite. We will call an element of $\bigcup_\cell \partial_V\cell$ a \defini{boundary-vertex}.
\begin{enumerate}
    \item Before applying the algorithm, enumerate the cells: when we will say ``pick a cell such that \emph{something}'', it will mean that we pick the cell with smallest label among all cells satisfying \emph{something}. This will guarantee univoque definition and measurability of the process, conditional independance (we do not pick a cell depending on what is inside), and that any cell that could be picked will be picked at some point even if the algorithm goes on forever.\footnote{We could make our argument to work without taking care of this last point.}
    \item Initialise the process by declaring all cells to be unexplored, and set the current cluster to be $\{v_0\}$.
    \item \label{item:reveal-cell}Pick some unexplored cell $\cell$ such that its boundary contains a vertex of the current cluster. Reveal all $X_e$'s for $e$ in the cell $\cell$. We do not need to remember all these values, only the answer to each of the following questions: for any $v$ and $w$ in $\partial_V\cell$, the question is ``is there a $p$-open path connecting $v$ to $w$ inside $\cell$?''. When the answer is yes, we say that $v$ is $p$-connected to $w$ in $\cell$. Then, the cell $\cell$ becomes explored.
    \item \label{item:saturate}Add to the current cluster all boundary-vertices $v$ such that there are a boundary-vertex $w$ in the current cluster and an explored cell $\cell$ containing both $v$ and $w$ such that $v$ is $p$-connected to $w$ in $\cell$.
    \item \label{item:saturate2}As long as it adds new vertices, iterate Instruction~\ref{item:saturate}.
    \item As long as there are unexplored cells, iterate Instructions~\ref{item:reveal-cell}--\ref{item:saturate2}.
\end{enumerate}

The algorithm stops after finitely many steps if and only if the $p$-cluster of $v_0$ is finite. If it stops after finitely many steps, define the final cluster to be the current cluster at the end of the algorithm. If the algorithm goes on forever, the current cluster is an increasing sequence of sets: in this case, we can define the final cluster to be the union of these sets. Notice that the final cluster produced by the algorithm consists of the intersection of the true percolation cluster with the set of boundary-vertices.

How can this be helpful to us?  To explain how, let us introduce, for each cell $\cell$, the space $\bound_\cell$ of all equivalence relations on $\partial_V\cell$. Elements of $\bound_\cell$ will typically serve to capture equivalence relations such as ``being $p$-connected in $\cell$''. For each $\cell$, the finite set $\bound_\cell$ is endowed with a partial order as follows. We say that $\eta$ is smaller than $\eta'$ if, for any $v$ and $w$ in $\partial_V\cell$, whenever $v$ and $w$ are in the same $\eta$-class, then they are also in the same $\eta'$-class.
%The smallest element of $\bound_\cell$, denoted by $\minconfig$, is the equivalence relation all classes of which are singletons. The largest element of $\boundcell$, denoted by $\maxconfig$, is the equivalence relation with exactly one class.

The strategy is to define an algorithm similar to the previous one exploring the boundary-vertices of the $(p,s)$-augmented percolation cluster of $v_0$. Due to the augmentation, we will be able to find some $q>p$ such that for every $\cell$, the random equivalence relation ``being $q$-connected in $\cell$'' is stochastically dominated by ``being $(p,s)$-connected in $\cell$''. Therefore, the algorithm will provide a coupling such that if the $q$-cluster of $v_0$ is infinite, then so is the $(p,s)$-augmented cluster. With some extra care regarding how to choose $p$ and $q$, we will be able to conclude Step~3 from there.

To define the exploration algorithm for $(p,s)$-augmented percolation, we only modify Instructions~3 and 4. Besides, we consider that we picked the same enumeration in Instruction~1 for both algorithms. Instructions~3 and 4 are as follows.
\begin{enumerate}\setcounter{enumi}{2}
\item \label{item:reveal-bonus}Pick some unexplored cell $\cell$ such that its boundary contains a vertex of the current cluster. Reveal all $X_e$'s for $e$ in the cell $\cell$, as well as $Y_\cell$. We do not need to remember all these values, only the answer to each of the following questions: for any $v$ and $w$ in $\partial_V\cell$, the question is ``is it the case that either there is a $p$-open path from $v$ to $w$ in $\cell$ or something else happens?''. In the previous sentence, ``something else'' refers to all the following conditions holding simultaneously: there is a $p$-open edge in $\cell$ such that one of its endpoints belongs to $\mathring\cell$ and the other one belongs to the current cluster; all edges lying in $\mathring \cell$ are $p$-open; $Y_\cell$ is equal to 1. When the answer to the question is yes, we say that $v$ is $(p,s)$-connected to $w$ in $\cell$.  Then, the cell $\cell$ becomes explored. {\it We insist that $v$ being $(p,s)$-connected to $w$ in $\cell$ is a Boolean that is defined now and will not be updated later. In other words, at any subsquent step, when we ask if vertices are $(p,s)$-connected in $\cell$, the ``current cluster'' appearing in the definition of ``$(p,s)$-connected'' is the cluster at the beginning of the instruction revealing $\cell$.}
\item Add to the current cluster all boundary-vertices $v$ such that there are a boundary-vertex $w$ in the current cluster and an explored cell $\cell$ containing both $v$ and $w$ such that $v$ is $(p,s)$-connected to $w$ in $\cell$.
\end{enumerate}

The final cluster produced by the algorithm is a subset\footnote{This is generally not an equality because, in the closure property~\ref{item:s-closure} of page~\pageref{item:s-closure}, it may well be that suitable $v$'s exist but that none of them is in the \emph{current} version of the cluster.} of the intersection of the true augmented cluster with the set of boundary-vertices. In particular, if the cluster produced by this algorithm is infinite, then $K^+_{p,s}(v_0)$ is infinite.

Denote by $Z_p(\cell)$ the random equivalence relation defined by ``being $p$-connected in $\cell$''. For $A$ a nonempty subset of $\partial_V\cell$, denote by $Z^A_{p,s}(\cell)$ the random equivalence relation defined as in Instruction~\ref{item:reveal-bonus}, but with $A$ instead of ``the current cluster''.
Because of the above algorithms, if the first of the following conditions holds, then so does so second:
\begin{itemize}
    \item for every $\cell$ and $A$, the distribution of $Z_q(\cell)$ is stochastically dominated by that of $Z^A_{p,s}(\cell)$,
    \item the intersection of the $q$-cluster of $v_0$ with the boundary-vertices has a distribution stochastically dominated by the same for the $(p,s)$-augmented cluster of $v_0$.
\end{itemize}

Combining this observation with the following lemma, we will be able to conclude.

\begin{lemm}
    \label{beurre:domicell}
    Let $\varepsilon>0$. There is some $\delta\in(0,\varepsilon]$ depending only on $R$, $s$, $\varepsilon$ and our upper bound on the maximal degree of $\Sm$ such that for every $p\in [\varepsilon,1-\varepsilon]$, for every $\cell$ and $A$, the distribution of $Z_{p+\delta}(\cell)$ is stochastically dominated by that of $Z^A_{p,s}(\cell)$.
\end{lemm}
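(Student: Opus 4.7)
The plan is to exploit finiteness of the cell $\cell$ to reduce the claim to a uniform comparison of finitely many polynomial inequalities in $p$ and $\delta$. Because $\cell\subseteq B_u(R)$ and $\Sm$ has bounded degree $D$, the number $|E_\cell|$ of edges in $\cell$ is bounded above by some $N=N(R,D)$, so $\bound_\cell$ is a finite poset of bounded cardinality. By Strassen's theorem it suffices to check that $\mathbb P(Z_{p+\delta}(\cell)\in U)\le \mathbb P(Z^A_{p,s}(\cell)\in U)$ for every up-set $U\subseteq \bound_\cell$. I would realise both variables on a common probability space using a family $(U_e)_{e\in E_\cell}$ of i.i.d.\ uniforms on $[0,1]$ (declaring $e$ to be $q$-open iff $U_e\le q$) together with an independent $Y\sim\mathrm{Bernoulli}(s)$. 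In this coupling $Z^A_{p,s}$ coincides with $Z_p$ outside the ``something else'' event and equals the top element $\top$ of $\bound_\cell$ on it; using the up-set property, the inequality to prove rearranges into
\[
\mathbb P(Z_p\notin U,\ Z_{p+\delta}\in U)\;\le\;\mathbb P(Z_p\notin U,\ \mathrm{something\ else}).
\]

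The left-hand side will be controlled by a trivial union bound: having $Z_p\neq Z_{p+\delta}$ forces some edge $e^\ast$ of $\cell$ to satisfy $U_{e^\ast}\in(p,p+\delta]$, whence $\text{LHS}\le |E_\cell|\,\delta\le N\delta$. For the right-hand side I would exhibit one specific ``minimum'' configuration realising $\mathrm{something\ else}$ while collapsing $Z_p$ to the discrete equivalence $\eta_0$ on $\partial_V\cell$: take all edges of $\mathring\cell$ $p$-open, exactly one edge from $\mathring\cell$ to $A$ $p$-open, every remaining edge of $\cell$ $p$-closed, and $Y=1$. Inspection shows that in this configuration the $p$-cluster of $\mathring\cell$ in $\cell$ absorbs at most the single $A$-endpoint of the distinguished edge, so the trace $Z_p$ on $\partial_V\cell$ is indeed $\eta_0$. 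The probability of this configuration is at least $s\,p^{|E_\cell|}(1-p)^{|E_\cell|}\ge s\,\varepsilon^{2N}$, uniformly over $p\in[\varepsilon,1-\varepsilon]$.

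The closing step is then purely order-theoretic: since $\eta_0$ is the minimum of $\bound_\cell$, its up-closure is all of $\bound_\cell$, so every non-trivial up-set $U\ne \bound_\cell$ omits $\eta_0$, whence $\mathbb P(Z_p\notin U,\ \mathrm{something\ else})\ge \mathbb P(Z_p=\eta_0,\ \mathrm{something\ else})\ge s\,\varepsilon^{2N}$; the case $U=\bound_\cell$ is trivial. Choosing $\delta:=\min\{\varepsilon,\ s\,\varepsilon^{2N}/N\}$ then makes LHS $\le$ RHS uniformly in $p,\cell,A,U$. The subtle point I expect to need the most attention is the existence of the minimum configuration, which requires $A$ to contain a vertex of $\cell$ adjacent to $\mathring\cell$: this is built into the reduction of Step~1, because every $w\in\partial_V\cell$ is the midpoint of an edge with an endpoint in $\precell_u\subseteq\mathring\cell$, and in the exploration algorithm a cell is picked only when the current cluster already meets $\partial_V\cell$. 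The genuine content of the lemma is this geometric compatibility between the coupling and the cell structure; once the ``minimum configuration'' is located, the probabilistic estimates themselves are elementary.
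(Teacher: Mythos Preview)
Your argument is correct. Both your proof and the paper's hinge on the same key observation: the ``minimum configuration'' (all of $\mathring\cell$ open, one edge from $\mathring\cell$ into $A$ open, the rest closed, $Y_\cell=1$) simultaneously gives $Z_p(\cell)=\minconfig$ and $Z^A_{p,s}(\cell)=\maxconfig$, with probability at least $s\varepsilon^{|E_\cell|}$. You also correctly spot the need for every vertex of $\partial_V\cell$ to have a neighbour in $\mathring\cell$, which is indeed guaranteed by the Step~1 reduction; the paper's own write-up leaves this slightly implicit.

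Where you differ is in the packaging of the comparison step. The paper keeps the natural monotone coupling $\mu_p$ of $(Z_p,Z^A_{p,s})$, invokes uniform continuity of $p\mapsto\nu_p$ to bound $\|\nu_{p+\delta}-\nu_p\|_1\le\alpha$, and then explicitly moves mass to and from the slot $(\minconfig,\maxconfig)$ to repair the first marginal into $\nu_{p+\delta}$ while preserving monotonicity. You instead invoke Strassen, rewrite the up-set inequality as $\mathbb P(Z_p\notin U,\,Z_{p+\delta}\in U)\le \mathbb P(Z_p\notin U,\,\mathrm{SE})$, and bound the left side by the crude edge-flip estimate $|E_\cell|\,\delta$. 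Your route yields an explicit $\delta=\min\{\varepsilon,\,s\varepsilon^{2N}/N\}$, whereas the paper's appeals to continuity without making $\delta$ explicit; conversely, the paper's mass-transport argument is self-contained and avoids quoting Strassen. The mathematical content is the same.
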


\begin{rema}
As for Lemma~\ref{lem:one-column}, this lemma follows rather easily from Strassen's Theorem for monotone couplings. Here again, we opted for the Strassen-free proof because, despite being longer, it seemed to us it was more instructive.
\end{rema}

\begin{proof}
Once we know $R$ and an upper bound on the maximal degree of $\Sm$, there are, up to isomorphism, only finitely many possibilities for $(\cell,A)$. Therefore, it suffices, given $\varepsilon$, to find some $\delta$ depending on $(s,\varepsilon,\cell,A)$ such that for every $p\in [\varepsilon,1-\varepsilon]$, the distribution of $Z_{p+\delta}(\cell)$ is stochastically dominated by that of $Z^A_{p,s}(\cell)$. Let us prove this.

Let $\varepsilon>0$. Let $\cell$ be a cell and $A$ a nonempty finite subset of $\partial_V\cell$. Let $p\in [\varepsilon,1-\varepsilon]$. Let $\mu_p$ be the distribution of $(Z_p(\cell),Z^A_{p,s}(\cell))$. Notice that $\mu_p$ is a monotone coupling between its first and its second marginal. We use the notation $\minconfig$ for the minimum of $\bound_\cell$, namely the equivalence relation with singleton classes. We use the notation $\maxconfig$ for the maximum of $\bound_\cell$, namely the equivalence relation with a single class. Because $\mathring\cell$ is connected, if we have the following conditions:
\begin{itemize}
    \item edges lying in $\mathring\cell$ are all open,
    \item other edges in $\cell$ are closed  except for a single one, which besides has an endpoint in $A$,
    \item $Y_\cell=1$,
\end{itemize}
then $Z_p(\cell)$ is $\minconfig$ and $Z^+_{p,s}(\cell)$ is $\maxconfig$. Therefore the event ``$Z_p(\cell)=\minconfig$ and $Z^+_{p,s}(\cell)=\maxconfig$'' has probability at least $\alpha:=s\cdot\varepsilon^{|E_\cell|}>0$.

Recall that $\mu_p$ is a probability measure on $(\bound_\cell)^2$. %We have just explained why $\mu_p(\minconfig,\maxconfig)=\alpha>0$. Also n
Notice that, for every $\eta\in \bound_\cell$, the quantity $\nu_p(\eta):=\sum_{\eta'}\mu_p(\eta,\eta')$ depends polynomially, hence uniformly continuously, on $p\in[0,1]$. Therefore, we can pick $\delta\in (0,\varepsilon]$ such that for every $(p,p')\in[0,1]^2$, if $|p-p'|\le \delta$, then we have $\beta:=\sum_{\eta}|\nu_{p'}(\eta)-\nu_{p}(\eta)|\le\alpha$.  By taking $p\in[\varepsilon, 1-\varepsilon]$ and $p'=p+\delta$, this means that, considering probability measures as vectors, the $L^1$-distance between the first marginal of $\mu_p$ and $\nu_{p+\delta}$ is $\beta\le\alpha$.  In order to get a monotone coupling between $\nu_{p+\delta}$ and the distribution of $Z^A_{p,s}(\cell)$, we start from $\mu_p$ --- which has the desired second marginal --- and ``move mass around'' to adjust the first marginal.

If $\nu_{p+\delta}(\eta)>\nu_p(\eta)$, we want to take mass $\nu_{p+\delta}(\eta)-\nu_p(\eta)$ from $(\minconfig,\maxconfig)$ and reallocate it to $(\eta,\maxconfig)$: this transforms the weight of $\eta$ into the desired one without changing the second marginal and without breaking monotonicity of the coupling, since $\eta\le\maxconfig$. If $\nu_{p+\delta}(\eta)\le\nu_p(\eta)$, we want to take mass $\nu_{p}(\eta)-\nu_{p+\delta}(\eta)$ from one or several $(\eta,\eta')$ and reallocate it to $(\minconfig,\eta')$: this transforms the weight of $\eta$ into the desired one without changing the second marginal and without breaking monotonicity of the coupling, since $\eta'\ge\minconfig$. Doing so for all $\eta\neq \minconfig$, we get a monotone coupling with correct marginals except possibly for $\minconfig$ --- we do not run into negative weights precisely because\footnote{Actually, as the total increase of the weights matches its total decrease, the inequality $\beta\le2\alpha$ would suffice.} $\beta\le \alpha$. Then, because probability measures have total mass 1, the marginals must also be correct for $\minconfig$.
\end{proof}

We are now ready to conclude Step~3. Pick $\varepsilon$ such that $p_c(\Sm)\in (\varepsilon, 1-\varepsilon)$. Pick some $\delta$ satisfying the conclusion of Lemma~\ref{beurre:domicell}. We can pick $v_0$ such that $p_c^{v_0}(\Sm)<p_c(\Sm)+\delta$, where $p_c^{v_0}(\Sm)$ denotes $p_c^\mathrm{bond}$ of the connected component of $v_0$ in $\Sm$. We can thus pick $q$ such that $\varepsilon < p_c(\Sm)\le p_c^{v_0}(\Sm)<q<p_c(\Sm)+\delta$. Therefore, we can pick $p$ such that $p+\delta\ge q$ and $\varepsilon<p<p_c(\Sm)$ --- either $p=q-\delta$ or $p=\frac{p_c(\Sm)+\varepsilon}{2}$ does the job. As $p+\delta \ge q>p_c^{v_0}(\Sm)$, the $(p+\delta)$-cluster of $v_0$ is infinite with positive probability. Because of Lemma~\ref{beurre:domicell} and the observation made just before it, this means that the $(p,s)$-augmented cluster of $v_0$ is infinite with positive probability. This gives $\pcaug(\Sm,s)\le p<p_c(\Sm)$, thus ending Step~3.

\subsection{Step 4: For some $s\in (0,1]$, we have $p_c^\mathrm{bond}(\La)\le p_c^\mathrm{aug}(\Sm,s)$.}
\label{sec:last-step}

This part is performed exactly as in the proof of Proposition~2.1 from \cite{ms}, to which the reader is referred for details. It is even simpler in the present case as the sets $E_\cell$ are disjoint, so that there is no need for the ``multiple edges'' trick. Let us still provide a sketch of the argument.

Let $p\in(0,1]$. We explore a spanning tree of the $p$-cluster of $v_0$ and lift it to $\La$ in the usual way, revealing one by one the edges adjacent to the explored part of the cluster. But if our $p$-open explored edges fill $B_u(r)$ in $\Sm$, then, since we can switch floors by lifting cycles of length at most some constant $c$, we will be able to connect to some unexplored copy\footnote{formally an unexplored lift of a spanning tree of $\cell_u$} of  $\cell_u$ with probability at least $p^c>0$. Performing $p$-percolation on this copy, there is some probability at least $p^M>0$ that this copy turns out to be fully open --- where $M:=\mathrm{maxdeg}(\Sm)^{R}$ is an upper bound on the cardinality of the edge-set of a spanning tree of $\cell_u$. Setting $s_p:=p^{M+c}>0$, these observations yield a coupling such that, almost surely, the image by $\pi$ of the $p$-cluster of $v_0'$ contains the $(p,s_p)$-augmented cluster of $v_0$.

How to conclude from there? If $p_c^\mathrm{bond}(\La)=0$, then there is nothing to prove. Otherwise, setting $s=s_{p_c^\mathrm{bond}(\La)}$ yields the statement of Step~4, thus concluding the proof of Theorem~\ref{thm:strict}.\qed

\vspace{0.2cm}

In light of Remarks~\ref{rem:compawithms} and \ref{rem:bond}, we conclude by asking the following question.

\begin{enonce}{Question}
Is it possible to adapt our proof of Theorem~\ref{thm:strict} to cover the case of site percolation? %More generally, can one prove  \cite[Theorem~1.1, site version]{ms} without resorting to essential enhancements?
\end{enonce}
\newpage
\bibliographystyle{smfalpha}
\bibliography{biblio}

@article{ms,
author = {S{\'e}bastien Martineau and Franco Severo},
title = {{Strict monotonicity of percolation thresholds under covering maps}},
volume = {47},
journal = {The Annals of Probability},
number = {6},
publisher = {Institute of Mathematical Statistics},
pages = {4116--4136},
keywords = {covering maps, critical point, percolation, quasi-transitive graphs, strict monotonicity},
year = {2019},
doi = {10.1214/19-AOP1355},
URL = {https://doi.org/10.1214/19-AOP1355}
}

@article{bs,
author = {Itai Benjamini and Oded Schramm},
title = {Percolation Beyond $\mathbb{Z}^d$, Many Questions And a Few Answers},
volume = {1},
journal = {Electronic Communications in Probability},
publisher = {Institute of Mathematical Statistics and Bernoulli Society},
pages = {71--82},
keywords = {Criticality, isoperimetericinequality, percolation, planar graph, transitive graph},
year = {1996},
doi = {10.1214/ECP.v1-978},
URL = {https://doi.org/10.1214/ECP.v1-978}
}

@article{ag,
  title={Strict monotonicity for critical points in percolation and ferromagnetic models},
  author={Aizenman, Michael and Grimmett, Geoffrey},
  journal={Journal of Statistical Physics},
  volume={63},
  pages={817--835},
  year={1991},
  publisher={Springer}
}

@article{cmt,
  title={Locality of percolation for graphs with polynomial growth},
  author={Contreras, Daniel and Martineau, S{\'e}bastien and Tassion, Vincent},
  journal={Electronic Comm. in Probability},
  volume={28},
  pages={1--9},
  year={2023},
  publisher={The Institute of Mathematical Statistics and the Bernoulli Society}
}

@article{v,
author = {Vanneuville, Hugo},
title = {{Exponential decay of the volume for Bernoulli percolation: a proof via stochastic comparison}},
 journal = {Annales Henri Lebesgue},
     pages = {101--112},
     publisher = {\'ENS Rennes},
     volume = {8},
     year = {2025}
}

@article{a,
author = {Armbruster, Benjamin},
title = {A Short Proof of {Strassen's Theorem} Using Convex Analysis},
note={Arxiv:1603.00137}
}

@article{gm,
author = {Gassmann, Loïc and Manolescu, Ioan},
title = {Comparison of arm exponents in planar {FK}-percolation},
note={Arxiv:2410.23013}
}

@inproceedings{bh,
  title={Percolation processes: {I.} {C}rystals and mazes},
  author={Broadbent, Simon and Hammersley, John},
  booktitle={Mathematical proceedings of the Cambridge philosophical society},
  volume={53},
  number={3},
  pages={629--641},
  year={1957},
  organization={Cambridge University Press}
}

@inproceedings{dc,
  title={Sixty years of percolation},
  author={Duminil-Copin, Hugo},
  booktitle={Proceedings of the International Congress of Mathematicians: Rio de Janeiro 2018},
  pages={2829--2856},
  year={2018}
}

@article{blyps,
  title={Group-invariant percolation on graphs},
  author={Benjamini, Itai and Lyons, Russell and Peres, Yuval and Schramm, Oded},
  journal={Geometric \& Functional Analysis},
  volume={9},
  number={1},
  pages={29--66},
  year={1999}
}

@article{bbr,
  title={Essential enhancements revisited},
  author={Balister, Paul and Bollob{\'a}s, B{\'e}la and Riordan, Oliver},
  note={ArXiv:1402.0834}
}

@article{rv,
  title={Strict inequalities for arm exponents in planar percolation},
  author={Radhakrishnan, Ritvikand Tassion, Vincent},
  journal={Probability Theory and Related Fields},
  pages={1--30},
  year={2025}
}

@article{s,
  title={The existence of probability measures with given marginals},
  author={Strassen, Volker},
  journal={The Annals of Mathematical Statistics},
  volume={36},
  number={2},
  pages={423--439},
  year={1965},
  publisher={Institute of Mathematical Statistics}
}

@article{l,
  title={On {S}trassen's {T}heorem on Stochastic Domination},
  author={Lindvall, Torgny},
  journal={Electronic Communications in Probability},
  volume={4},
  pages={51--59},
  year={1999},
  publisher={The Institute of Mathematical Statistics and the Bernoulli Society}
}

@article{bk,
  title={Inequalities with applications to percolation and reliability},
  author={van den Berg, Jacob and Kesten, Harry},
  journal={Journal of Applied Probability},
  volume={22},
  number={3},
  pages={556--569},
  year={1985},
  publisher={Cambridge University Press}
}

@article{r,
  title={Proof of the van den Berg--Kesten conjecture},
  author={Reimer, David},
  journal={Combinatorics, Probability and Computing},
  volume={9},
  number={1},
  pages={27--32},
  year={2000},
  publisher={Cambridge University Press}
}

@article{bn,
author = {Vincent Beffara and Pierre Nolin},
title = {{On monochromatic arm exponents for 2D critical percolation}},
volume = {39},
journal = {The Annals of Probability},
number = {4},
publisher = {Institute of Mathematical Statistics},
pages = {1286--1304},
keywords = {Critical exponent, percolation, Scaling limit},
year = {2011},
doi = {10.1214/10-AOP581},
URL = {https://doi.org/10.1214/10-AOP581}
}
\end{document}